\newtheorem{theorem}{Theorem}[section]
\newtheorem{lemma}[theorem]{Lemma}
\newtheorem{definition}[theorem]{Definition}
\newtheorem{proposition}[theorem]{Proposition}
\newtheorem{corollary}[theorem]{Corollary}
\theoremstyle{definition}
\newtheorem{rem}[theorem]{Remark}
\newtheorem{question}[theorem]{Question}
\newtheorem{examples}[theorem]{Examples}
\newcommand\pf{\begin{proof}}
\newcommand\epf{\end{proof}}
\newcommand\C{\mathbb{C}}
\newcommand\B{\mathcal{B}}
\newcommand\yd{\mathcal{YD}}
\newcommand\ext{\mathrm{Ext}}
\newcommand\co{\operatorname{co}}
\DeclareMathOperator{\Hom}{Hom}
\DeclareMathOperator{\id}{id}
\DeclareMathOperator{\GL}{GL}
\DeclareMathOperator{\SL}{SL}
\numberwithin{equation}{section}
\title{Gerstenhaber-Schack and Hochschild cohomologies  of  Hopf algebras}
\author{Julien Bichon}
\address{
Laboratoire de Math\'ematiques,
Universit\'e Blaise Pascal,
Campus universitaire des C\'ezeaux,
3 place Vasarely,
63178~Aubi\`ere Cedex, France}
\email{Julien.Bichon@math.univ-bpclermont.fr}
\subjclass[2010]{16T05, 16E40, 16E10}
\begin{document}

\begin{abstract}
We show that the Gerstenhaber-Schack cohomology of a  Hopf algebra determines its Hochschild cohomology, and in particular its Gerstenhaber-Schack cohomological dimension bounds its Hochschild cohomological dimension, with equality of the dimensions when the Hopf algebra is cosemisimple of Kac type.
Together with some general considerations on free Yetter-Drinfeld modules over adjoint Hopf subalgebras and the monoidal invariance of Gerstenhaber-Schack cohomology, this is used to show that both Gerstenhaber-Schack and Hochschild cohomological dimensions of the coordinate algebra of the quantum permutation group are $3$. 
\end{abstract}

\maketitle

\section{Introduction}

We study homological properties of Hopf algebras by using Yetter-Drinfeld modules and tensor category techniques. We are especially interested in the following question: 

\begin{question}\label{ques}
If $A$ and $B$ are Hopf algebras having equivalent tensor categories of comodules, how are their Hochschild cohomologies related? In particular do $A$ and $B$ have the same cohomological dimension? 
\end{question}

We have seen in \cite{bic} that the Hochschild cohomologies of two such Hopf algebras $A$ and $B$ are indeed closely related, using resolutions of the trivial Yetter-Drinfeld module over $A$ (or over $B$) formed by free Yetter-Drinfeld modules. In the present paper we continue this study along the same line of ideas.

Our first remark in view of Question \ref{ques} is that there exists at least a cohomology theory for Hopf algebras that is known to be well-behaved with respect to this situation: Gerstenhaber-Schack cohomology \cite{gs1,gs2}.
Let $A$ be a Hopf algebra and let $M$ be a Hopf bimodule over $A$: the Gerstenhaber-Schack cohomology $H^*_{\rm GS}(A,M)$ of $A$ with coefficients in $M$ \cite{gs2} is defined to be the homology of an explicit bicomplex whose
columns are modeled on the Hochschild complex of the underlying algebra
and rows are modeled on the Cartier complex of the underlying coalgebra.
When $M=A$ is the trivial Hopf bimodule, then $H^*_{\rm GS}(A,A)=:H_b^*(A)$ is known as the bialgebra cohomology of $A$. This cohomology theory, which can also be defined in terms of Yetter-Drinfeld modules, was first introduced in view of applications to deformation theory, and  has been used as a key tool in the proof of several fundamental results on finite-dimensional Hopf algebras \cite{ste2,eg}.

If $A$ and $B$ are Hopf algebras having equivalent tensor categories of comodules, then there exists a tensor equivalence $F: {\!_A^A\mathcal M}_A^A \rightarrow {\!_B^B\mathcal M}_B^B$ between their categories of Hopf bimodules such that for any Hopf bimodule $M$ over $A$, we have $H_{\rm GS}^*(A,M)\simeq H_{\rm GS}^*(B,F(M))$, and in particular $H^*_b(A) \simeq H^*_b(B)$ and ${\rm cd}_{\rm GS}(A)={\rm cd}_{\rm GS}(B)$ (where ${\rm cd}_{\rm GS}$ denotes the Gerstenhaber-Schack cohomological dimension, defined in the obvious way, see Section 5). We call these properties the monoidal invariance of Gerstenhaber-Schack cohomology.

Going back to Question \ref{ques}, the next question is to study how Hochschild and Gerstenhaber-Schack cohomologies are related. We show that the Gerstenhaber-Schack cohomology of a  Hopf algebra determines its Hochschild cohomology. More precisely, we show that if $A$ is a  Hopf algebra, then there exists a functor
$G: {\!_A\mathcal M}_A \rightarrow {\!_A^A\mathcal M}_A^A$ such that for any $A$-bimodule $M$, we have
$$H^*(A,M)\simeq H_{\rm GS}^*(A,G(M))$$
In particular we have ${\rm cd}(A)\leq {\rm cd}_{\rm GS}(A)$.
Then if $A$ and $B$ are  Hopf algebras as in Question \ref{ques}, combining this with the monoidal invariance of  Gerstenhaber-Schack cohomology, we get the existence of two functors $F_1 : {\!_A\mathcal M}_A \rightarrow {\!_B^B\mathcal M}_B^B$ and $F_2 : {\!_B\mathcal M}_B \rightarrow {\!_A^A\mathcal M}_A^A$
such that for any $A$-bimodule $M$ and any  $B$-bimodule $N$, we have
$$H^*(A,M)\simeq H_{\rm GS}^*(B,F_1(M)) \ {\rm and} \ H^*(B,N)\simeq H_{\rm GS}^*(A,F_2(N))$$
In particular
$$\max({\rm cd}(A),{\rm cd}(B))\leq {\rm cd}_{\rm GS}(A)={\rm cd}_{\rm GS}(B)$$
These isomorphisms and this inequality thus provide partial answers to Question \ref{ques}. They lead to the following new question:

\begin{question}\label{ques2}
  Is it true that ${\rm cd}(A)={\rm cd}_{\rm GS}(A)$ for any  Hopf algebra $A$ over a field of characteristic zero? Is it true at least if $A$ is assumed to be cosemisimple?
\end{question}

A positive answer would give the monoidal invariance of cohomological dimension and fully answer the last part of Question \ref{ques}, and would also be a natural infinite-dimensional generalization of a famous result by Larson-Radford \cite{lr}, which states that, in characteristic zero, a finite-dimensional cosemisimple Hopf algebra is semisimple. See Remark \ref{rem:lr}. 

We provide (Corollary \ref{cor:partansques2}) a partial positive answer to Question \ref{ques2} in the case where $A$ is cosemisimple of Kac type (the square of the antipode is the identity), and in turn this gives a partial positive answer to Question \ref{ques} (see Corollary \ref{cor:partansques}).

We then apply our general considerations to quantum symmetry Hopf algebras, which were the first motivation for this work. Let $(R, \varphi)$ semisimple measured algebra  of dimension $\geq 4$, and let $A_{\rm aut}(R, \varphi)$ be its quantum symmetry Hopf algebra \cite{wan98,bi00}. 
We compute, in the cosemisimple case, the bialgebra cohomology of $A_{\rm aut}(R,\varphi)$,  and we show that ${\rm cd}(A_{\rm aut}(R,\varphi))\leq {\rm cd}_{\rm GS}(A_{\rm aut}(R,\varphi))=3$, with equality if $\varphi$ is a trace.
These results include  in particular the coordinate algebra of Wang's quantum permutation group $S_n^+$ \cite{wan98}.


As a last comment to further motivate the use of Gerstenhaber-Schack cohomology as an appropriate cohomology theory for Hopf algebras (apart from its use to get information on Hochschild cohomology itself), we would like to point out that, in the examples computed so far, it also has the merit to avoid the ``dimension drop'' phenomenon
usually encountered for quantum algebras (see \cite{hk,hk2}): the canonical choice of coefficients (the trivial Hopf bimodule) is the good one to get the cohomological dimension. It would be interesting to know if this can be further generalized.

The paper is organized as follows. Section 2 consists of preliminaries. In Section 3 we discuss the cohomological dimension of a Hopf subalgebra and the sub-additivity of the cohomological dimension under extensions.
Section 4 is devoted to Yetter-Drinfeld modules: we recall the concept of free (resp. co-free) Yetter Drinfeld module and we introduce the notion of relative projective (resp. injective) Yetter-Drinfeld module, which corresponds, via the  tensor equivalence between Yetter-Drinfeld modules and Hopf bimodules \cite{sc94}, to the notion of relative projective (resp. injective) Hopf bimodule considered in \cite{shst}. We show that relative projective (resp. injective) Yetter-Drinfeld modules are precisely the direct summands of free (resp. co-free) Yetter-Drinfeld modules. This section also contains some considerations on free Yetter-Drinfeld modules over adjoint Hopf subalgebras. In Section 5, after having recalled some basic facts on Gerstenhaber-Schack cohomology,  we provide an explicit complex that computes the Gerstenhaber-Schack cohomology $H^*_{\rm GS}(A,V)$, if $A$ is cosemisimple or if the Yetter-Drinfeld module $V$ is relative injective, using results from \cite{shst} in this last case (see Proposition \ref{compgs}).
We then show that Gerstenhaber-Schack cohomology determines Hochschild cohomology, and show that Question \ref{ques2} has a positive answer in the case of cosemisimple Hopf algebras of Kac type.
In Section 6 we study the examples mentioned earlier in the introduction.
In Section 7 we discuss the Gerstenhaber-Schack cohomological dimension in the setting of  Hopf algebras having a projection. 

\section{Preliminaries}

In this preliminary section we fix some notation, we recall some basic definitions and facts on   the Hochschild cohomology of a Hopf algebra, and we discuss exact sequences of Hopf algebras.

\subsection{Notations and conventions}
We work over $\mathbb C$ (or over any algebraically closed field of characteristic zero). This assumption does not affect any of the theoretical results in the paper, but is important for the examples we consider.
We assume that the reader is familiar with the theory of Hopf algebras and their tensor categories of comodules, as e.g. in \cite{kas,ks,mon}.
If $A$ is a Hopf algebra, as usual, $\Delta$, $\varepsilon$ and $S$ stand respectively for the comultiplication, counit and antipode of $A$. We use Sweedler's notations in the standard way. The category of right $A$-comodules is denoted $\mathcal M^A$, the category of right $A$-modules is denoted $\mathcal M_A$, etc...  
The trivial (right) $A$-module is denoted $\C_\varepsilon$.
The set of $A$-module morphisms (resp. $A$-comodule morphisms) between two $A$-modules (resp. two $A$-comodules) $V$ and $W$ is denoted ${\rm Hom}_A(V,W)$ (resp. ${\rm Hom}^A(V,W)$).

\subsection{Hochschild cohomology of a Hopf algebra} If $A$ is an algebra and $M$ is an $A$-bimodule, then $H^*(A,M)$ denotes, as usual, the  Hochschild cohomology  of $A$ with coefficients in $M$. See e.g. \cite{wei}.

\begin{definition}
 The Hochschild cohomological dimension of an algebra $A$ is
defined to be 
$${\rm cd}(A)= {\rm sup}\{n : \ H^n(A, M) \not=0 \ {\rm for} \ {\rm some} \ A-{\rm bimodule} \ M\}\in \mathbb N \cup \{\infty\}$$   
\end{definition}

As noted by several authors (see \cite{ft}, \cite{gk}, \cite{hk}, \cite{bz}, \cite{cht}, \cite{bic}),
the Hochschild cohomology of a Hopf algebra can be described by using a suitable $\ext$ functor on the category of left or right $A$-modules.
Indeed, if $A$ is a Hopf algebra and $M$ is an $A$-bimodule, we have
$$H^*(A,M) \simeq \ext^*_A(\C_\varepsilon, M')$$ 
where the above $\ext$ is in the category of right $A$-modules and $M'$ is $M$ equipped with the right $A$-module structure given by $x \leftarrow a = S(a_{(1)})\cdot x \cdot a_{(2)}$.

This leads to the following description of the cohomological dimension of a Hopf algebra.

\begin{proposition}\label{cdhopf}
 Let $A$ be a Hopf algebra. We have
\begin{align*}
 {\rm cd}(A)&= {\rm sup}\{n : \ \ext^n_A(\mathbb C_\varepsilon, M) \not=0 \ {\rm for} \ {\rm some} \ A-{\rm module} \ M\} \\
& = {\rm inf}\{n: \ \ext^i_A(\mathbb C_\varepsilon, -) =0 \ {\rm for} \ i>n\} \\
& = {\rm inf}\{n: \ \mathbb C_\varepsilon \ {\rm admits} \ {\rm a} \ {\rm projective} \  {\rm resolution} \ {\rm of} \ {\rm length} \ n\}   
\end{align*}
\end{proposition}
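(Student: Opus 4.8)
The plan is to show that all three expressions on the right-hand side coincide with the projective dimension $\mathrm{pd}_A(\C_\varepsilon)$ of the trivial module $\C_\varepsilon$ in the category $\mathcal M_A$ of right $A$-modules, so that the proposition reduces to the $\ext$-description of Hochschild cohomology recalled just above the statement together with standard homological algebra.

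First I would prove the first equality, which is the only place where the Hopf structure is genuinely used. By the isomorphism $H^*(A,M) \simeq \ext^*_A(\C_\varepsilon, M')$, the integers $n$ with $H^n(A,M)\neq 0$ for some $A$-bimodule $M$ are exactly those $n$ with $\ext^n_A(\C_\varepsilon, M')\neq 0$; taking $N=M'$ shows the set of such $n$ is contained in $\{n : \ext^n_A(\C_\varepsilon, N)\neq 0 \text{ for some } N \in \mathcal M_A\}$. For the reverse inclusion I would check that the twisting functor $M \mapsto M'$ from ${\!_A\mathcal M}_A$ to $\mathcal M_A$ is essentially surjective: given $(N,\triangleleft)\in \mathcal M_A$, equip the underlying space $N$ with the bimodule structure having trivial left action $a\cdot n = \varepsilon(a)n$ and right action $\triangleleft$. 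Then, using $\varepsilon\circ S = \varepsilon$ and the counit axiom $\varepsilon(a_{(1)})a_{(2)}=a$, a direct Sweedler computation gives $n \leftarrow a = S(a_{(1)})\cdot n \cdot a_{(2)} = \varepsilon(a_{(1)})\,(n\triangleleft a_{(2)}) = n\triangleleft a$, so that $M' \cong N$ in $\mathcal M_A$. Hence the two sets of integers agree, and comparing their suprema with the definition of $\mathrm{cd}(A)$ yields the first equality.

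It then remains to identify the three right-hand quantities with $\mathrm{pd}_A(\C_\varepsilon)$, which is a purely formal matter valid for any module over any ring (see \cite{wei}). The supremum $\sup\{n : \ext^n_A(\C_\varepsilon, N) \neq 0\}$ is by definition $\mathrm{pd}_A(\C_\varepsilon)$; the usual dimension-shifting and syzygy arguments show that this number equals both $\inf\{n : \ext^i_A(\C_\varepsilon,-)=0 \text{ for } i>n\}$ (vanishing of $\ext^{n+1}_A(\C_\varepsilon,-)$ propagates to all higher degrees) and $\inf\{n : \C_\varepsilon \text{ admits a projective resolution of length } n\}$ (the relevant syzygy becomes projective exactly when $\ext^{n+1}_A(\C_\varepsilon,-)$ vanishes, allowing one to truncate the resolution). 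The only non-routine point in the whole argument is the essential surjectivity of $M\mapsto M'$; everything else reduces to the cited $\ext$-interpretation of Hochschild cohomology and to standard properties of projective dimension.
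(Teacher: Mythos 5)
Your proposal is correct and follows essentially the same route as the paper: the first equality is obtained from the isomorphism $H^*(A,M)\simeq\ext^*_A(\C_\varepsilon,M')$ together with the observation that every right $A$-module arises as $M'$ for the bimodule with trivial left action (your computation $n\leftarrow a=\varepsilon(a_{(1)})\,n\triangleleft a_{(2)}=n\triangleleft a$ is exactly the paper's $(_\varepsilon V)'=V$), and the remaining equalities are the standard identification with the projective dimension of $\C_\varepsilon$, which the paper likewise delegates to the group-cohomology argument in Brown.
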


\begin{proof}
The previous isomorphism ensures that 
$${\rm cd}(A)\leq {\rm sup}\{n : \ \ext^n_A(\mathbb C_\varepsilon, M) \not=0 \ {\rm for} \ {\rm some} \ A-{\rm module} \ M\}$$
If $V$ is a right $A$-module, let $_\varepsilon V$ be the $A$-bimodule whose right structure is that of $V$ and whose left structure is trivial, i.e. given by $\varepsilon$. Then $(_\varepsilon V)'=V$, hence the converse inequality holds, and the first equality in the statement is proved, as well as the second one.
The last  one is shown similarly as in the case of group cohomology, see e.g. \cite[Chapter VIII, Lemma 2.1]{br}. 
\end{proof}

\begin{examples}
 \begin{enumerate}
  \item If $G$ is a linear algebraic group, with coordinate algebra $\mathcal O(G)$, it is well-known that ${\rm cd}(\mathcal O(G))= \dim G$.
\item If $\Gamma$ is a (discrete) group, then ${\rm cd}(\C\Gamma)={\rm cd}_\C(\Gamma)$, the cohomological dimension of $\Gamma$ with coefficients $\C$. We have ${\rm cd}(\C\Gamma)=0$  if and only if $\Gamma$ is finite, and if $\Gamma$ is finitely generated, then ${\rm cd}(\C\Gamma)=1$ if and only if $\Gamma$ contains a free normal subgroup of finite index, see \cite{dun}.
\item If $A$ is a finite-dimensional Hopf algebra, then either ${\rm cd}(A)=0$ (when $A$ is semisimple) or ${\rm cd}(A)=\infty$, a finite-dimensional Hopf algebra being Frobenius and hence self-injective.
 \end{enumerate}
\end{examples}

\subsection{Exact sequences of Hopf algebras}
A sequence  of Hopf algebra maps
\begin{equation*}\C \to B \overset{i}\to A \overset{p}\to L \to
\C\end{equation*} is said to be exact \cite{ad} if the following
conditions hold:
\begin{enumerate}\item $i$ is injective and $p$ is surjective,
\item $\ker p =Ai(B)^+ =i(B)^+A$, where $i(B)^+=i(B)\cap{\rm Ker}(\varepsilon)$,
\item $i(B) = A^{\co L} = \{ a \in A:\, (\id \otimes p)\Delta(a) = a \otimes 1
\} = {^{\co L}A} = \{ a \in A:\, (p \otimes \id)\Delta(a) = 1 \otimes a
\}$. \end{enumerate}
Note that condition (2) implies  $pi= \varepsilon 1$.

\begin{proposition}\label{refoexact}
Let \begin{equation*}\C \to B \overset{i}\to A \overset{p}\to L \to \C
    \end{equation*}
be a sequence of Hopf algebra maps where $i$ is injective, $p$ is surjective and $pi$ = $\varepsilon 1$.
Assume that the antipode of $A$ is bijective. Consider the following three assertions.
\begin{enumerate}
 \item $A$ is faithfully flat as a right $B$-module and ${\rm Ker}(p) =Ai(B)^+ =i(B)^+A$.
\item   $^{\co L}A=A^{\co L}=i(B)$ and $p$ is left or right faithfully coflat.
\item The sequence is exact.
\end{enumerate}
Then we have $(1) \Rightarrow (3)$ and $(2)\Rightarrow (3)$, and if $(3)$ holds, then we have $(1)\iff (2)$.
\end{proposition}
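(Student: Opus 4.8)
The plan is to deduce everything from the two elementary inclusions that hold under the standing hypotheses, combined with faithfully flat and faithfully coflat descent for Hopf algebras (in the style of Takeuchi, Masuoka and Schneider), the bijectivity of the antipode being used throughout to pass freely between left- and right-handed statements. First I would record the inclusions that need no flatness assumption. Since $i$ is a coalgebra map, $i(B)$ is a subcoalgebra, so $\Delta(i(B)) \subseteq i(B) \otimes i(B)$; together with $pi = \varepsilon 1$ this gives $(\id \otimes p)\Delta(b) = b \otimes 1$ and $(p \otimes \id)\Delta(b) = 1 \otimes b$ for every $b \in i(B)$, that is, $i(B) \subseteq A^{\co L} \cap {}^{\co L}A$. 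Dually, for $b \in i(B)^+$ one has $p(b) = \varepsilon(b) 1 = 0$, whence $A i(B)^+ + i(B)^+ A \subseteq \Ker p$. These are the ``easy halves'' of the coinvariant condition and of the ideal condition in the definition of exactness.

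For $(1) \Rightarrow (3)$, assertion $(1)$ already supplies the ideal condition $\Ker p = A i(B)^+ = i(B)^+ A$, so that only the coinvariant condition remains, and by the previous paragraph only the reverse inclusions $A^{\co L} \subseteq i(B)$ and ${}^{\co L}A \subseteq i(B)$ have to be proved. This is exactly faithfully flat descent: when $A$ is faithfully flat over the Hopf subalgebra $B$ and $L = A/A i(B)^+$, one has $A^{\co L} = i(B)$, and since the bijectivity of the antipode turns right faithful flatness into left faithful flatness, the left-handed identity ${}^{\co L}A = i(B)$ follows symmetrically. The implication $(2) \Rightarrow (3)$ is the comodule-theoretic dual: here $(2)$ supplies the coinvariant condition, so that only the reverse inclusion $\Ker p \subseteq A i(B)^+$ remains, and this is precisely faithfully coflat descent, namely that when $p$ is (say) right faithfully coflat and $i(B) = A^{\co L}$ then $\Ker p = A i(B)^+$; the antipode again reconciles the two sides.

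Finally, assume that $(3)$ holds. Then both the ideal condition and the coinvariant condition are available for free, so that $(1)$ is equivalent to the single assertion ``$A$ is faithfully flat as a right $B$-module'' and $(2)$ to the single assertion ``$p$ is faithfully coflat''. What remains is therefore the equivalence of these two, which is the deep point and the main obstacle: it is Schneider's theorem for an exact sequence of Hopf algebras with bijective antipode, stating that faithful flatness of $A$ over the normal Hopf subalgebra $B$ is equivalent to faithful coflatness of $A$ over the quotient $L$. The bijectivity of the antipode is essential both to have this equivalence in the required strength and to erase the left/right distinctions appearing in $(1)$ and $(2)$.
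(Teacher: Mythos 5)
Your proposal is correct and follows essentially the same route as the paper, which handles this proposition by citation: the easy inclusions are immediate, $(1)\Rightarrow(3)$ is faithfully flat descent (Takeuchi's Theorem~1, Schneider's Lemma~1.3), $(2)\Rightarrow(3)$ is faithfully coflat descent (Takeuchi's Theorem~2 with the M\"uller--Schneider remark), and the equivalence $(1)\iff(2)$ under $(3)$ is Schneider's Corollary~1.8. You have correctly identified all three ingredients and where the bijectivity of the antipode is used to pass between the left- and right-handed versions.
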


An exact sequence satisfying $(1)$ and $(2)$ is called strict \cite{schneider}. Note that $p$ is automatically faithfully coflat if $L$ is cosemisimple.

That $(1) \Rightarrow (3)$ holds is well-known (see \cite[Proposition 1.2.4]{ad},  \cite[Lemma 1.3]{schn},  \cite[Proposition 3.4.3]{mon}, or more generally \cite[Theorem 1]{tak}). Also that $(1)\iff (2)$ if $(3)$ holds is known, see \cite[Corollary 1.8]{schn}.
I wish to thank the referee for pointing out that  $(2) \Rightarrow (3)$ follows from \cite[Theorem 2]{tak}, combined with  \cite[Remark 1.3]{musc}.

\section{Cohomological dimension of a Hopf subalgebra}

In this section we discuss the behavior of cohomological dimension when passing to a Hopf subalgebra, which, under mild  assumptions, is similar to the group cohomology case.

\begin{proposition}\label{sub}
  Let $B \subset A$ be a Hopf subalgebra. Assume that one of the following conditions holds.
\begin{enumerate}
\item $A$ is projective as a right $B$-module.
\item The antipode of $A$ is bijective and $A$ is faithfully flat as a right $B$-module.
\item $A$ is cosemisimple.
\item The exists a Hopf algebra map $\pi : A \rightarrow B$ such that $\pi_{|B}={\rm id}_B$.
\item The antipode of $A$ is bijective and $B$ is commutative.
\end{enumerate}
Then ${\rm cd}(B) \leq {\rm cd}(A)$.
\end{proposition}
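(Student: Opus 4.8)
The plan is to reduce everything to the third characterization of the cohomological dimension in Proposition \ref{cdhopf}, namely ${\rm cd}(A)={\rm pd}_A(\C_\varepsilon)$, the projective dimension of the trivial module. Since $B\subset A$ is a Hopf subalgebra, the counit of $A$ restricts to that of $B$, so the trivial right $A$-module $\C_\varepsilon$ restricts to the trivial right $B$-module. It therefore suffices to establish the single statement: \emph{if $A$ is projective as a right $B$-module, then ${\rm pd}_B(\C_\varepsilon)\leq {\rm pd}_A(\C_\varepsilon)$.} Granting this, I will check that each of the five hypotheses guarantees that $A$ is projective as a right $B$-module, assumption (1) being this property itself.

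To prove that statement, assume ${\rm cd}(A)=n<\infty$ (otherwise there is nothing to prove) and choose, by Proposition \ref{cdhopf}, a projective resolution $P_\bullet \to \C_\varepsilon \to 0$ of length $n$ in $\mathcal M_A$. Restriction of scalars along $B\hookrightarrow A$ is exact, so $P_\bullet \to \C_\varepsilon \to 0$ remains exact in $\mathcal M_B$; moreover each $P_i$, being a direct summand of a free module $A^{(I)}$, restricts to a direct summand of $(A_{|B})^{(I)}$, which is projective in $\mathcal M_B$ precisely because $A$ is projective as a right $B$-module. Hence $\C_\varepsilon$ admits a projective resolution of length $n$ over $B$, and Proposition \ref{cdhopf} yields ${\rm cd}(B)\leq n = {\rm cd}(A)$.

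It remains to verify projectivity of $A$ over $B$ under (2)--(5). Under (4), the structure theorem for a Hopf algebra with a projection (Radford, Majid) provides an isomorphism of right $B$-modules $A\cong R\otimes B$, where $R=A^{{\rm co}\,\pi}$; in particular $A$ is free, hence projective, over $B$. Under (2), I will invoke the theory of faithfully flat Hopf--Galois (coalgebra--Galois) extensions: when the antipode is bijective and $A$ is faithfully flat over $B$, the extension $B\subseteq A$ is Galois and faithful flatness forces $A$ to be projective (indeed a generator) as a $B$-module, see \cite{schn,tak}. Finally, (3) and (5) reduce to (2): a cosemisimple Hopf algebra has bijective antipode and is faithfully flat over every Hopf subalgebra (Chirvasitu), while for (5) a Hopf algebra with bijective antipode is faithfully flat over any commutative Hopf subalgebra (Masuoka--Wigner type faithful-flatness results).

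The main obstacle is precisely the step in (2): extracting genuine projectivity of $A$ as a $B$-module from mere faithful flatness. Flatness alone only guarantees that restriction preserves exactness, which is not enough to transport projective resolutions from $\mathcal M_A$ to $\mathcal M_B$; one really needs the projectivity supplied by Galois descent, and this is where the bijectivity of the antipode (and, for (3) and (5), the external faithful-flatness theorems) enters. Once projectivity is in hand, the comparison of resolutions in the second paragraph is routine, and the remaining reductions are purely a matter of quoting the appropriate structural results.
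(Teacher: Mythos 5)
Your proposal is correct and follows essentially the same route as the paper: reduce everything to case (1) by showing $A$ is projective over $B$ (so that restriction of a length-$n$ projective resolution of $\C_\varepsilon$ stays projective), obtain projectivity in case (2) from Schneider's faithful-flatness results, deduce (3) and (5) from (2) via the Chirvasitu and commutative-base faithful-flatness theorems (the paper cites Arkhipov--Gaitsgory for the latter), and handle (4) by Radford's freeness theorem. No gaps.
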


\begin{proof}
If $A$ is projective as a right $B$-module, any projective right $A$-module is projective as a right $B$-module, thus a resolution of length $n$ of $\mathbb C_\varepsilon$ in $\mathcal M_A$ yields a resolution of length $n$ in $\mathcal M_B$, and thus Proposition \ref{cdhopf} ensures that ${\rm cd}(B) \leq {\rm cd}(A)$.
Assuming (2), Corollary 1.8 in \cite{schn} yields that $A$ is projective as a right $B$-module, and we conclude by (1). If we assume that $A$ is cosemisimple, then its antipode is bijective and by \cite{chi} $A$ is faithfully flat as a right $B$-module, and we conclude by (2). If we assume (4), then $A$ is free as a right $B$-module, see \cite{rad85} (we will come back to this situation in Section 7), thus we conclude by (1). If $B$ is commutative, then $A$ is faithfully flat over $B$ by Proposition 3.12 in \cite{ag}, and again we conclude by (2).
\end{proof}

The following result is the generalization of the sub-additivity of cohomological dimension under extensions (see e.g. Proposition 2.4 in \cite{br}) 
with essentially the same proof, using Stefan's spectral sequence \cite{ste} as the natural generalization of the Hochschild-Serre spectral sequence.

\begin{proposition}\label{subadd}
  Let \begin{equation*} \C \longrightarrow B \overset{i}\longrightarrow A \overset{p}\longrightarrow L \longrightarrow \C
    \end{equation*}
be a strict exact sequence of Hopf algebras , and assume that the antipode of $A$ is bijective.
Then we have ${\rm cd}(B)\leq  {\rm cd}(A) \leq {\rm cd}(B) + {\rm cd}(L)$. If moreover $L$ is semisimple, then 
${\rm cd}(B)={\rm cd}(A)$.
\end{proposition}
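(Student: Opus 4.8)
The plan is to establish the two inequalities separately, obtaining the lower bound from the results of Section 3 and the upper bound from Stefan's spectral sequence, and then to deduce the semisimple case by a degeneration argument.

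First I would dispose of the inequality ${\rm cd}(B)\leq {\rm cd}(A)$. Since the sequence is strict, condition (1) of Proposition \ref{refoexact} holds, so $A$ is faithfully flat as a right $B$-module; as the antipode of $A$ is assumed bijective, hypothesis (2) of Proposition \ref{sub} is satisfied, and that proposition yields ${\rm cd}(B)\leq {\rm cd}(A)$ directly.

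For the upper bound ${\rm cd}(A)\leq {\rm cd}(B)+{\rm cd}(L)$ I would invoke Stefan's spectral sequence \cite{ste}. Strictness is designed precisely to place us in the setting where $A$ is a faithfully flat $L$-Galois extension of $B=A^{\co L}$, which is the situation in which Stefan constructs, for any $A$-bimodule $M$, a convergent spectral sequence of the form
$$E_2^{p,q}=H^p\bigl(L, H^q(B,M)\bigr)\Longrightarrow H^{p+q}(A,M),$$
where the inner Hochschild cohomology $H^q(B,M)$ carries a natural $L$-(bi)module structure making the outer term $H^p(L,-)$ meaningful. Writing $m={\rm cd}(B)$ and $\ell={\rm cd}(L)$, Proposition \ref{cdhopf} (applied to $B$ and to $L$) gives $H^q(B,M)=0$ for $q>m$ and $H^p(L,-)=0$ for $p>\ell$. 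Hence $E_2^{p,q}=0$ unless $p\leq \ell$ and $q\leq m$, so $E_\infty^{p,q}=0$ whenever $p+q>m+\ell$; convergence then forces $H^n(A,M)=0$ for all $n>m+\ell$ and every $A$-bimodule $M$, that is, ${\rm cd}(A)\leq {\rm cd}(B)+{\rm cd}(L)$. When $L$ is semisimple one has ${\rm cd}(L)=0$, since every $L$-module is projective, so $H^p(L,-)=0$ for $p>0$ and the spectral sequence degenerates at the $E_2$ page, giving isomorphisms $H^n(A,M)\simeq H^0\bigl(L,H^n(B,M)\bigr)$. In particular $H^n(A,M)=0$ for $n>{\rm cd}(B)$, so ${\rm cd}(A)\leq {\rm cd}(B)$, and combined with the lower bound already obtained this yields ${\rm cd}(A)={\rm cd}(B)$.

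The routine part is the homological bookkeeping with the spectral sequence, which mirrors the classical proof of sub-additivity for group extensions (cf. \cite{br}). The main obstacle, and the point requiring genuine care, is verifying that a strict exact sequence really falls within the scope of Stefan's construction: one must check that strictness (faithful flatness of $A$ over $B$ together with $B=A^{\co L}={}^{\co L}A$) and bijectivity of the antipode are exactly what is needed to present $A$ as an $L$-Galois extension of $B$ to which \cite{ste} applies, and to pin down the $L$-module structure on $H^q(B,M)$ that controls both the vanishing estimate and the identification of the edge terms in the semisimple case.
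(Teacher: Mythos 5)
Your argument is correct and follows essentially the same route as the paper: the lower bound via Proposition \ref{sub}(2) (strictness giving faithful flatness, plus bijectivity of the antipode), the upper bound via Stefan's spectral sequence $E_2^{p,q}=H^p(L,H^q(B,M))\Rightarrow H^{p+q}(A,M)$ for the faithfully flat $L$-Galois extension $B\subset A$, and the semisimple case from ${\rm cd}(L)=0$. The paper justifies the Galois condition by citing the bijectivity of the canonical map $A\otimes_B A\to A\otimes L$ from Schneider, which is exactly the verification you flag as the point requiring care.
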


\begin{proof}
 By \cite[Lemma 1.3]{schn}, (or more generally \cite[Theorem 1]{tak}, see also \cite[Proposition 3.4.3]{mon}), the canonical map
\begin{align*}
 A \otimes_B A &\longrightarrow A \otimes L \\
a \otimes_B a' &\longmapsto aa'_{(1)} \otimes p(a'_{(2)})
\end{align*}
is bijective. Thus $B \subset A$ is an $L$-Galois extension, and $A$ is faithfully flat both as a left and right $B$-module (the antipode of $A$ is bijective). Thus for any $A$-$A$-bimodule $M$ there exists a spectral sequence \cite{ste}
$$E_2^{pq} = H^p(L;H^q(B,M))\Rightarrow H^{p+q}(A,M)$$
The spectral sequence is concentrated in the rectangle $0\leq p \leq {\rm cd}(L)$, $0 \leq q \leq {\rm cd}(B)$, and it follows that for $i>   {\rm cd}(L) + {\rm cd}(B)$, we have $H^i(A,M)=0$, and this proves the inequality. If $L$ is semisimple, then ${\rm cd}(L)=0$, and hence  ${\rm cd}(B)={\rm cd}(A)$.
\end{proof}




\section{Yetter-Drinfeld modules}

Let $A$ be a Hopf algebra. Recall that a (right-right) Yetter-Drinfeld
module over $A$ is a right $A$-comodule and right $A$-module $V$
satisfying the condition, $\forall v \in V$, $\forall a \in A$, 
$$(v \leftarrow a)_{(0)} \otimes  (v \leftarrow a)_{(1)} =
v_{(0)} \leftarrow a_{(2)} \otimes S(a_{(1)}) v_{(1)} a_{(3)}$$
The category of Yetter-Drinfeld modules over $A$ is denoted $\yd_A^A$:
the morphisms are the $A$-linear $A$-colinear maps.
Endowed with the usual tensor product of 
modules and comodules, it is a tensor category, with unit the trivial Yetter-Drinfeld module, denoted $\C$.

An important example of Yetter-Drinfeld module is the right coadjoint Yetter-Drinfeld
module $A_{\rm coad}$: as a right $A$-module $A_{\rm coad}=A$ and the right $A$-comodule structure
is defined by 
$${\rm ad}_r(a) = a_{(2)} \otimes S(a_{(1)})a_{(3)}, \forall a \in A$$
The coadjoint Yetter-Drinfeld module has a natural generalization, discussed in the next subsection.

\subsection{Free and co-free Yetter-Drinfeld modules} We now discuss some important constructions of Yetter-Drinfeld modules (left-right versions of these constructions were first given in \cite{camizh97}, see \cite{shst} as well, in the context of Hopf bimodules).

Let $V$ be a right $A$-comodule. The Yetter-Drinfeld module $V \boxtimes A$ is defined as follows \cite{bic}. 
As a vector space $V \boxtimes A=V \otimes A$, the right module structure is given by multiplication on the right, and the right coaction $\alpha_{V \boxtimes A}$ is defined by
$$ \alpha_{V \boxtimes A}( v \otimes a)=v_{(0)}  \otimes a_{(2)} \otimes S(a_{(1)})v_{(1)}a_{(3)}$$
Note that $A_{\rm coad} = \mathbb C \boxtimes A$.

A Yetter-Drinfeld module is said to be free if it is isomorphic to $V\boxtimes A$ for some comodule $V$.

The construction of the free Yetter-Drinfeld module on a comodule yields a functor $ L = - \boxtimes A: \mathcal M^A \longrightarrow \yd_A^A$ which is left adjoint to
  the forgetful functor $ R :   \yd_A^A \longrightarrow \mathcal M^A$ . Indeed 
we have natural isomorphisms
\begin{align*}
\Hom^A(V, R(X))  &\longrightarrow  \Hom_{\yd_A^A}(V \boxtimes A, X)
  \\
 f &\longmapsto \tilde{f}, \ \tilde{f}(v\otimes a)=f(v)\leftarrow a
 \end{align*}
for any $A$-comodule $V$ and any Yetter-Drinfeld module $X$.

Now let $M$ be a right $A$-module. The Yetter-drinfeld module $M \#A$ is defined as follows: the underlying vector space is $M \otimes A$, the right coaction is  ${\rm id}_M \otimes \Delta$, while the right action is given by
$$(x \otimes a) \leftarrow b= x\cdot b_{(2)} \otimes S(b_{(1)})ab_{(3)}$$ 
The Yetter-Drinfeld module $\C\#A$ is the adjoint Yetter-Drinfeld module, denoted $A_{\rm ad}$. 

A Yetter-Drinfeld module will be said to be co-free if it is isomorphic to $M\# A$ for some module $M$.
The construction of the co-free Yetter-Drinfeld module on a module yields a functor $ L = - \# A: \mathcal M_A \longrightarrow \yd_A^A$ which is right adjoint to
  the forgetful functor $L :   \yd_A^A \longrightarrow \mathcal M_A$. Indeed 
we have natural isomorphisms
\begin{align*}
\Hom_{\yd_A^A}(X, M \# A) & \longrightarrow \Hom_A(L(X), M)  
  \\
 f &\longmapsto ({\rm id}_M \otimes \varepsilon)f, 
 \end{align*}
for any $A$-module $M$ and any Yetter-Drinfeld module $X$.

\subsection{Relative projective and relative injective Yetter-Drinfeld modules} We will use the following notions.

\begin{definition}
Let $V$ be a Yetter-Drinfeld module over $A$. Then $V$
 is said to be relative projective if the functor 
${\rm Hom}_{\yd_A^A}(V,-)$ transforms exact sequences of Yetter-Drinfeld modules that split as sequences of comodules  to exact sequences of vector spaces. 

Similarly $V$ is said to be relative injective if the functor 
${\rm Hom}_{\yd_A^A}(-,V)$ transforms exact sequences of Yetter-Drinfeld modules that split as sequences of modules  to exact sequences of vector spaces. 
\end{definition}

Relative projective Yetter-Drinfeld modules have the following characterization. 

\begin{proposition}\label{caracrelproj}
 Let $P$ be a Yetter-Drinfeld module over $A$. The following assertions are equivalent.
\begin{enumerate}
 \item $P$ is relative projective.
\item Any surjective morphism of Yetter-Drinfeld modules $f : M \rightarrow P$ that admits a section which is a map of comodules admits a section which is a morphism of Yetter-Drinfeld modules.
\item $P$ is a direct summand of a free Yetter-Drinfeld module.
\end{enumerate}
If $A$ is cosemisimple, these conditions are equivalent to $P$ being a projective object of $\yd_A^A$.
\end{proposition}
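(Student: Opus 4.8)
The plan is to prove the equivalence of (1), (2) and (3) by establishing the cycle of implications $(1)\Rightarrow(2)\Rightarrow(3)\Rightarrow(1)$, using the adjunction between the free Yetter-Drinfeld functor $L=-\boxtimes A$ and the forgetful functor $R:\yd_A^A\to\mathcal M^A$ recorded in the previous subsection. For $(1)\Rightarrow(2)$, given a surjection $f:M\to P$ of Yetter-Drinfeld modules admitting a comodule section, I would apply the functor $\Hom_{\yd_A^A}(P,-)$ to the short exact sequence $0\to\Ker f\to M\to P\to 0$. This sequence splits as a sequence of comodules precisely because $f$ has a comodule section, so relative projectivity of $P$ makes $\Hom_{\yd_A^A}(P,M)\to\Hom_{\yd_A^A}(P,P)$ surjective; lifting $\id_P$ then produces the desired Yetter-Drinfeld section of $f$.

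For $(2)\Rightarrow(3)$, I would exhibit a canonical surjection from a free Yetter-Drinfeld module onto $P$ that splits in comodules. The natural candidate is the counit of the adjunction: setting $V=R(P)$, the adjunction isomorphism $\Hom^A(R(P),R(P))\cong\Hom_{\yd_A^A}(R(P)\boxtimes A,P)$ sends $\id_{R(P)}$ to the counit map $\varepsilon_P:R(P)\boxtimes A\to P$, explicitly $v\otimes a\mapsto v\leftarrow a$. This map is surjective (it already hits all of $P$ via $v\otimes 1\mapsto v$), and I would check that $v\mapsto v\otimes 1$ is a comodule section, which amounts to verifying that the coaction $\alpha_{V\boxtimes A}$ restricted to $V\otimes 1$ reduces to the original coaction of $P$ — a short computation using $a=1$ in the formula for $\alpha_{V\boxtimes A}$. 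Condition (2) then provides a Yetter-Drinfeld section, realizing $P$ as a direct summand of the free module $R(P)\boxtimes A$.

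For $(3)\Rightarrow(1)$, I would first show that free Yetter-Drinfeld modules are relative projective, and then note that direct summands and the defining property behave well together. Relative projectivity of $V\boxtimes A$ follows directly from the adjunction: for any exact sequence of Yetter-Drinfeld modules that splits in comodules, applying $R$ keeps it exact (split), and the natural isomorphism $\Hom_{\yd_A^A}(V\boxtimes A,-)\cong\Hom^A(V,R(-))$ converts the question into exactness of $\Hom^A(V,-)$ on a split exact sequence of comodules, which holds trivially. A direct summand $P$ of such a module inherits relative projectivity since $\Hom_{\yd_A^A}(P,-)$ is a retract of $\Hom_{\yd_A^A}(V\boxtimes A,-)$, hence sends the relevant exact sequences to exact sequences.

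Finally, for the last assertion, assume $A$ is cosemisimple. Then every comodule is projective and injective in $\mathcal M^A$, so every short exact sequence of comodules splits; consequently every exact sequence of Yetter-Drinfeld modules automatically splits as a sequence of comodules. This collapses the relative notion to the absolute one: $\Hom_{\yd_A^A}(P,-)$ preserving exactness of comodule-split sequences becomes preservation of all exact sequences, which is exactly projectivity of $P$ in $\yd_A^A$. I expect the main obstacle to be the careful verification in $(2)\Rightarrow(3)$ that the counit $\varepsilon_P$ is the right map and that $v\mapsto v\otimes 1$ is genuinely a comodule section; the remaining implications are formal consequences of the adjunction once that concrete splitting is in place.
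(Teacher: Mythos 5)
Your proof of the equivalence of (1), (2) and (3) is correct and follows essentially the same route as the paper: the same counit surjection $R(P)\boxtimes A\to P$ with the colinear section $x\mapsto x\otimes 1$ for $(2)\Rightarrow(3)$, and the same use of the adjunction $\Hom_{\yd_A^A}(V\boxtimes A,-)\simeq\Hom^A(V,R(-))$ to see that free objects are relative projective (the paper writes this out with an explicit lift $\widetilde{s\varphi_0}$, whereas you phrase it as exactness of $\Hom^A(V,-)$ on a split sequence -- same content). The one genuine difference is the final assertion: the paper shows that relative projectives are projective when $A$ is cosemisimple by invoking the external fact that free Yetter--Drinfeld modules are then projective objects of $\yd_A^A$ (Proposition 3.3 of \cite{bic}), while you observe directly that cosemisimplicity forces every exact sequence of Yetter--Drinfeld modules to split as a sequence of comodules, so the relative notion collapses onto the absolute one. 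Your argument is more self-contained and immediately gives both directions of the equivalence; the paper's citation buys the (reusable) stronger statement that the free objects themselves are projective generators of $\yd_A^A$.
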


\begin{proof}
The proof of  (1)$\Rightarrow$(2) is similar to the usual one for modules. Assume (2), and consider the surjective Yetter-Drinfeld module morphism $R(P)\boxtimes A \rightarrow P$, $x \otimes a \mapsto x\leftarrow a$.
The map $P \rightarrow  R(P)\boxtimes A$, $x \mapsto x \otimes 1$ is an $A$-colinear section, so by (2) $P$ is indeed, as a Yetter-Drinfeld module,  a direct summand of $R(P)\boxtimes A$.

Assume now that $P$ is free, i.e. $P=V\boxtimes A$ for some comodule $V$, and let 
$$0 \to M \overset{i}\to N \overset{p}\to Q \to 0$$
be an exact sequence of Yetter-Drinfeld modules that splits as a sequence of comodules.
The sequence 
 $$0 \to {\rm Hom}_{\yd_A^A}(P,M) \overset{i-}\longrightarrow {\rm Hom}_{\yd_A^A}(P,N) \overset{p -}\longrightarrow {\rm Hom}_{\yd_A^A}(P,Q)$$
is exact and we have to show the surjectivity of the map on the right.
Let $s : Q \rightarrow N$ be a morphism of comodules such that $ps={\rm id}_Q$.
Let $\varphi \in {\rm Hom}_{\yd_A^A}(V \boxtimes A, Q)$, and let $\varphi_0 : V \rightarrow Q$ be defined by $\varphi_0(v)=\varphi (v \otimes 1)$: $\varphi_0$ is a map of comodules, and so is $s\varphi_0$.
Considering now $\widetilde{s\varphi_0} \in {\rm Hom}_{\yd_A^A}(V \boxtimes A,N)$, we have $p\widetilde{s\varphi_0}= \varphi$, which gives the expected surjectivity result.
Now if $V\boxtimes A \simeq P \oplus M$ as Yetter-Drinfeld modules, then ${\rm Hom}_{\yd_A^A}(V\boxtimes A,-) \simeq {\rm Hom}_{\yd_A^A}(P,-) \oplus {\rm Hom}_{\yd_A^A}(M,-)$, and the usual argument for projective modules work to conclude that $P$ is relative projective.

It is clear that a projective Yetter-Drinfeld module is relative projective, and if $A$ is cosemisimple, a free Yetter-Drinfeld module is a projective object in $\yd_A^A$ (Proposition 3.3 in \cite{bic}), hence a direct summand of a free Yetter-Drinfeld module is projective, and so is a relative projective Yetter-Drinfeld module.  
\end{proof}

Similarly, relative injective Yetter-Drinfeld modules are characterized as follows. The proof is similar to the one of the previous result, and is left to the reader.

\begin{proposition}\label{caracrelinj}
 Let $I$ be a Yetter-Drinfeld module over $A$. The following assertions are equivalent.
\begin{enumerate}
 \item $I$ is relative injective.
\item Any injective morphism of Yetter-Drinfeld modules $f : I \rightarrow M$ that admits a section which is a map of modules admits a section which is a morphism of Yetter-Drinfeld modules.
\item $P$ is a direct summand of a co-free Yetter-Drinfeld module.
\end{enumerate}
\end{proposition}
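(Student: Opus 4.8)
The plan is to run the same cycle $(1)\Rightarrow(2)\Rightarrow(3)\Rightarrow(1)$ as in Proposition \ref{caracrelproj}, dualizing every step: the free functor $-\boxtimes A$ with its counit is replaced by the co-free functor $-\#A$ with its unit, surjections by injections, comodule splittings by module splittings, and sections by retractions, the adjunction $\Hom_{\yd_A^A}(X,M\#A)\cong\Hom_A(L(X),M)$ (with $L$ the forgetful functor to right modules) now playing the role the free adjunction played before. For $(1)\Rightarrow(2)$, given an injective Yetter-Drinfeld morphism $f\colon I\to M$ admitting an $A$-linear retraction, I would apply relative injectivity to $0\to I\overset{f}\to M\to\Coker(f)\to 0$, a sequence of Yetter-Drinfeld modules that splits as modules via the retraction of $f$; exactness of $\Hom_{\yd_A^A}(-,I)$ then makes $-\circ f\colon\Hom_{\yd_A^A}(M,I)\to\Hom_{\yd_A^A}(I,I)$ surjective, and a preimage of $\id_I$ is the desired Yetter-Drinfeld retraction. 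This is the usual module argument, dualized.

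For $(2)\Rightarrow(3)$ I would use the unit $\eta_I\colon I\to L(I)\#A$ of the co-free adjunction, given explicitly by $\eta_I(x)=x_{(0)}\otimes x_{(1)}$. Since $(\id\otimes\varepsilon)\eta_I=\id_I$, the morphism $\eta_I$ is injective, and $\id\otimes\varepsilon\colon L(I)\#A\to L(I)$ is an $A$-linear retraction of it; applying $(2)$ then produces a Yetter-Drinfeld retraction, exhibiting $I$ as a direct summand of the co-free module $L(I)\#A$. This is dual to using the surjection $R(P)\boxtimes A\to P$ together with its colinear section $x\mapsto x\otimes 1$ in the projective case. I do not expect a genuine obstacle anywhere, since the whole argument is a formal dual of Proposition \ref{caracrelproj}; the one point that is not pure duality, and hence worth verifying carefully, is exactly here—one must check directly from the action $(x\otimes a)\leftarrow b=x\cdot b_{(2)}\otimes S(b_{(1)})ab_{(3)}$ that $\id\otimes\varepsilon$ is $A$-linear, the mirror of checking colinearity of $x\mapsto x\otimes 1$ in the projective proof, and the place where the module and comodule structures genuinely swap roles.

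For $(3)\Rightarrow(1)$ I would first treat the co-free case $I=M\#A$. Let $0\to X\overset{i}\to Y\overset{p}\to Z\to 0$ be an exact sequence of Yetter-Drinfeld modules splitting as modules via $s\colon Y\to X$ with $si=\id_X$. Given $\psi\in\Hom_{\yd_A^A}(X,M\#A)$, I would set $\psi_0=(\id\otimes\varepsilon)\psi\in\Hom_A(L(X),M)$ and transport the module map $\psi_0\circ L(s)$ across the adjunction to $\widetilde{\psi_0 s}\in\Hom_{\yd_A^A}(Y,M\#A)$. Naturality of the adjunction isomorphism in the first variable yields $(\id\otimes\varepsilon)(\widetilde{\psi_0 s}\circ i)=\psi_0\circ L(s)\circ L(i)=\psi_0=(\id\otimes\varepsilon)\psi$, and since this isomorphism is a bijection it forces $\widetilde{\psi_0 s}\circ i=\psi$, giving the surjectivity of $-\circ i$; with the automatic left exactness this shows $\Hom_{\yd_A^A}(-,M\#A)$ is exact on the sequence. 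Finally, the passage from co-free modules to their direct summands is the standard additivity argument: from $M\#A\cong I\oplus I'$ one gets $\Hom_{\yd_A^A}(-,M\#A)\cong\Hom_{\yd_A^A}(-,I)\oplus\Hom_{\yd_A^A}(-,I')$, so exactness on the direct sum descends to each summand.
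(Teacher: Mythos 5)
Your proposal is correct and is exactly what the paper intends: the paper's own ``proof'' of Proposition \ref{caracrelinj} is simply the remark that one dualizes the argument of Proposition \ref{caracrelproj}, replacing the free adjunction and its counit $x\otimes a\mapsto x\leftarrow a$ with the co-free adjunction and its unit $x\mapsto x_{(0)}\otimes x_{(1)}$, which is precisely the cycle $(1)\Rightarrow(2)\Rightarrow(3)\Rightarrow(1)$ you carry out. Your verifications (that $\eta_I$ is a Yetter--Drinfeld morphism with $A$-linear retraction $\id\otimes\varepsilon$, and the naturality argument in $(3)\Rightarrow(1)$) are the right details to check and they all go through.
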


\subsection{Yetter-Drinfeld modules and Hopf bimodules}
In this subsection we briefly recall the category equivalence between Yetter-Drinfeld modules and Hopf bimodules \cite{sc94}, and check that the notion of relative projective objects (resp. relative injective objects) for Yetter-Drinfeld modules corresponds to that for Hopf bimodules considered in \cite{shst}.

First recall that a Hopf bimodule over $A$ is an $A$-bimodule and $A$-bicomodule $M$ whose respective left and right coactions $\lambda : M \rightarrow A \otimes M$ and $\rho : M \rightarrow M \otimes A$ are $A$-bimodule maps.
The category of Hopf bimodules over $A$, whose morphisms are the bimodule and bicomodule maps, is denoted 
${\!_A^A\mathcal M}_A^A$.

If $M$ is Hopf bimodule over $A$, then ${^{{\rm co}A}\!M}=\{x \in M \ | \ \lambda(x)=1 \otimes x\}$ is a right subcomodule of $M$, and inherits a right $A$-module structure given by $x\leftarrow a= S(a_{(1)}).x.a_{(2)}$, making it into a Yetter-Drinfeld module over $A$.
This defines a functor
\begin{align*}
 {\!_A^A\mathcal M}_A^A &\longrightarrow \yd_A^A \\
M & \longmapsto {^{{\rm co}A}\!M}
\end{align*}
Conversely, starting from a Yetter-Drinfeld module $V$, one defines a Hopf bimodule structure on $A \otimes V$ as follows. The bimodule structure is given by
$$a.(b\otimes v).c= abc_{(1)} \otimes (v\leftarrow c_{(2)})$$ 
and the bicomodule structure is given by the following left and right coactions
\begin{align*}
 \lambda : A \otimes V &\longrightarrow A \otimes A \otimes V \quad \quad \quad \ \ \ \ \ \rho : A \otimes V \longrightarrow A \otimes V \otimes A \\
a \otimes v & \longmapsto  a_{(1)} \otimes a_{(2)} \otimes v \quad \quad \quad \quad \quad a \otimes v  \longmapsto a_{(1)} \otimes v_{(0)} \otimes  a_{(2)} v_{(1)}
\end{align*}
If $f : V \longrightarrow W$ is a morphism of Yetter-Drinfeld module, then ${\rm id}_A \otimes f : A \otimes V \rightarrow A \otimes W$ is a morphism of Hopf bimodules, and hence we get a functor 
\begin{align*}
\yd_A^A  &\longrightarrow {\!_A^A\mathcal M}_A^A  \\
V & \longmapsto A\otimes V
\end{align*}
The two functors just defined are quasi-inverse equivalences, see \cite{sc94}.

\begin{lemma}\label{comprelprojinj}
 Relative projective (resp. relative injective) objects in $\yd_A^A$ correspond, via the category equivalence $\yd_A^A \simeq {\!_A^A\mathcal M}_A^A$, to relative projective (resp. relative injective) objects of ${\!_A^A\mathcal M}_A^A $ in the sense of \cite{shst}.
\end{lemma}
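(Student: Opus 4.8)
The plan is to exploit that the functors $\Phi\colon \yd_A^A \to {\!_A^A\mathcal M}_A^A$, $V \mapsto A \otimes V$, and $\Psi\colon M \mapsto {}^{\co A}M$ form an equivalence of categories (as recalled just above), so that for all $V,X \in \yd_A^A$ there are natural isomorphisms $\Hom_{\yd_A^A}(V,X) \simeq \Hom_{{\!_A^A\mathcal M}_A^A}(\Phi V, \Phi X)$. Since the relative projective (resp. injective) objects on both sides are defined by exactness of a $\Hom$-functor on a distinguished class of short exact sequences, and since $\Phi$ is exact, the whole statement reduces to the following compatibility of the distinguished classes: a short exact sequence of Yetter-Drinfeld modules splits as a sequence of right comodules (resp. right modules) if and only if its image under $\Phi$ splits as a sequence of bicomodules (resp. bimodules).

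First I would prove the two ``forward'' implications, which are immediate from the explicit structure maps. If $s$ is a section that is $A$-colinear (resp. $A$-linear), then $\id_A \otimes s$ is the required section on the Hopf bimodule side: a direct check against the formulas $\lambda(a \otimes v) = a_{(1)} \otimes a_{(2)} \otimes v$, $\rho(a \otimes v) = a_{(1)} \otimes v_{(0)} \otimes a_{(2)}v_{(1)}$ (resp. against $a.(b\otimes v).c = abc_{(1)} \otimes (v \leftarrow c_{(2)})$) shows that $\id_A \otimes s$ is bicolinear (resp. bilinear) precisely because $s$ is colinear (resp. linear).

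Then the two ``reverse'' implications. For the comodule case I would restrict a bicomodule section $\sigma\colon A \otimes W \to A \otimes V$ to left coinvariants: since ${}^{\co A}(A\otimes V) = 1 \otimes V \cong V$ as right comodules, this restriction is exactly a right-colinear section of the original sequence, namely $\Psi(\sigma)$; this case is formal. For the module case, from a bimodule section $\sigma$ I would set $s(w) = (\eps \otimes \id_V)\sigma(1 \otimes w)$; it is at once a section of $p$, and to see that it is right $A$-linear one rewrites $1 \otimes (w \leftarrow c) = S(c_{(1)}).\bigl((1 \otimes w) \leftarrow c_{(2)}\bigr)$ inside $A \otimes W$, applies the bilinearity of $\sigma$, and collapses using the counit and multiplicativity of $\eps$. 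This last verification, where the antipode enters, is the main (though entirely elementary) computational point of the proof.

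Finally I would assemble the conclusion: combining $\Hom_{\yd_A^A}(V,-) \simeq \Hom_{{\!_A^A\mathcal M}_A^A}(\Phi V,-)$ with the identification of distinguished classes shows that $V$ is relative projective iff $\Phi V$ is relative projective, and dually for relative injective using $\Hom_{\yd_A^A}(-,V) \simeq \Hom_{{\!_A^A\mathcal M}_A^A}(-,\Phi V)$. Alternatively, and perhaps more transparently, one can argue through the direct-summand characterizations of Propositions \ref{caracrelproj} and \ref{caracrelinj}: an equivalence preserves direct summands, so it suffices to check that $\Phi$ carries the free Yetter-Drinfeld modules $V \boxtimes A$ (whose image is $A \otimes V \otimes A$) and the cofree ones $M \# A$ to the generators of, respectively, the relative projective and relative injective Hopf bimodules of \cite{shst}. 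The only potential obstacle along this second route is bookkeeping: matching the explicit induced/coinduced Hopf bimodules used in \cite{shst} with the images under $\Phi$ of the free and cofree Yetter-Drinfeld modules.
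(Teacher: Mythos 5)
Your proposal is correct and follows essentially the same route as the paper: reduce, via the equivalence $V\mapsto A\otimes V$, to showing that a comodule (resp.\ module) section of a surjection of Yetter-Drinfeld modules exists iff a bicomodule (resp.\ bimodule) section of the induced surjection of Hopf bimodules exists, using $\id_A\otimes s$ in one direction and $w\mapsto(\eps\otimes\id_V)\sigma(1\otimes w)$ in the other. Your explicit antipode computation for the module case is exactly the verification the paper leaves to the reader, and is correct.
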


\begin{proof}
 Let $M$ be a Hopf bimodule over $A$. That $M$ is relatively projective means that 
the functor 
${\rm Hom}_{{\!_A^A\mathcal M}_A^A}(M,-)$ transforms exact sequences of Hopf bimodules that split as sequences of bicomodules  to exact sequences of vector spaces. The proof of the lemma easily reduces to the following statement.                                                                                                                                                                                                                                 

Let $f: V \to W$ be a surjective  morphism of Yetter-Drinfeld modules, inducing a surjective morphism of  Hopf bimodules ${\rm id}_A \otimes f : A \otimes V \rightarrow A \otimes W$. Then there exists an $A$-comodule section to $f$ if and only if  there exists an $A$-bicomodule section to ${\rm id}_A \otimes f$.

Indeed, if $s : W \rightarrow V$ is $A$-colinear with $fs={\rm id}_W$, then  ${\rm id}_A \otimes s : A \otimes W \rightarrow A \otimes V$ is $A$-bicolinear and is a section to ${\rm id}_A \otimes f$.
Conversely starting with an $A$-bicolinear  map $T : A \otimes W \rightarrow A \otimes V$ with  $({\rm id}_A \otimes f)T= {\rm id}_{A \otimes W}$, then the map $s : W \rightarrow V$ defined by $s(w)= \varepsilon \otimes {\rm id}_V(T(1 \otimes w))$ is $A$-colinear, and satisfies $fs={\rm id}_W$.

Similarly, that $M$ is relatively injective means that 
the functor 
${\rm Hom}_{{\!_A^A\mathcal M}_A^A}(-,M,)$ transforms exact sequences of Hopf bimodules that split as sequences of bimodules  to exact sequences of vector spaces. The proof that this corresponds to the notion of relative injective Yetter-Drinfeld module is left to the reader.
\end{proof}

\subsection{Adjoint Hopf subalgebras}
We now discuss the way to restrict certain  free Yetter-Drinfeld to adjoint Hopf subalgebras.

\begin{proposition}\label{caradj}
 Let $B \subset A$ be a Hopf subalgebra. The following assertions are equivalent.
\begin{enumerate}
 \item For any $a \in A$ and $b \in B$, we have
$$a_{(2)} \otimes S(a_{(1)})ba_{(3)} \in A \otimes B$$
\item For any $B$-comodule $W$, we have $\alpha_{V \boxtimes A}(W \boxtimes A) \subset (W \boxtimes A) \otimes B$ so that $W \boxtimes A$ is an object of $\yd_{B}^B$.
\end{enumerate}
\end{proposition}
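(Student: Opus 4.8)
The plan is to prove the equivalence of the two conditions by unwinding the definitions of the relevant coactions and tracking where the "extra" component of the coaction lands. The key observation is that both conditions are really statements about a single coaction formula, namely the right coaction on $V \boxtimes A$, applied in the case where the comodule $V$ comes from $B$.

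First I would establish the implication $(1) \Rightarrow (2)$. Let $W$ be a $B$-comodule, which I regard as an $A$-comodule via the inclusion $B \subset A$, so that $w_{(0)} \otimes w_{(1)} \in W \otimes B$ for all $w \in W$. Recall the coaction formula
$$\alpha_{W \boxtimes A}(w \otimes a) = w_{(0)} \otimes a_{(2)} \otimes S(a_{(1)})w_{(1)}a_{(3)}.$$
I need the last tensor factor to lie in $B$. Here $w_{(1)} \in B$ since $W$ is a $B$-comodule, so the expression $S(a_{(1)})w_{(1)}a_{(3)}$ is exactly of the form treated in condition $(1)$ (with $b = w_{(1)} \in B$), after summing over the comultiplication of $a$. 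Thus condition $(1)$ gives $a_{(2)} \otimes S(a_{(1)})w_{(1)}a_{(3)} \in A \otimes B$, and tensoring on the left with $w_{(0)} \in W$ yields $\alpha_{W \boxtimes A}(w \otimes a) \in (W \boxtimes A) \otimes B$, as required. Once the coaction corestricts to $B$, the Yetter-Drinfeld axioms over $B$ follow formally from those over $A$ together with the fact that $B$ is a Hopf subalgebra, so $W \boxtimes A$ is an object of $\yd_B^B$.

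For the converse $(2) \Rightarrow (1)$, the natural choice is to feed condition $(2)$ the simplest possible comodule built from a given $b \in B$. I would take $W$ to be a suitable $B$-subcomodule of $B$ containing $b$ (for instance the subcomodule generated by $b$ under the right regular coaction $\Delta_{|B}$), or more slickly take $W = B$ itself with its regular $B$-comodule structure, and evaluate. Taking the element $b \otimes a \in W \boxtimes A$ and applying $\alpha_{W \boxtimes A}$ gives $b_{(0)} \otimes a_{(2)} \otimes S(a_{(1)})b_{(1)}a_{(3)} \in (W \boxtimes A)\otimes B$, where the coproduct $b_{(0)} \otimes b_{(1)}$ is computed in $B$. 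Applying $\varepsilon$ to the $W$-component (i.e. the map $\varepsilon \otimes \id_A \otimes \id$) collapses $b_{(0)}\varepsilon(b_{(1)})$ back to... well, here I must be slightly careful, since applying the counit to the first leg of $\Delta(b)$ reconstructs $b$, which is what makes the element $a_{(2)} \otimes S(a_{(1)})b a_{(3)}$ reappear in $A \otimes B$. This recovers condition $(1)$.

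The main obstacle I anticipate is purely bookkeeping rather than conceptual: in the direction $(2)\Rightarrow(1)$ one must check that the operation used to extract condition $(1)$ from condition $(2)$ (applying $\varepsilon$ to the correct tensor leg) genuinely preserves the constraint "lies in $\otimes B$" and correctly isolates the element $S(a_{(1)})ba_{(3)}$ with $b$ arbitrary in $B$. Since $\varepsilon$ and the comodule structure maps are all $\C$-linear and the codomain $A \otimes B$ is preserved under applying $\varepsilon$ to the appropriate leg, this should go through, but the Sweedler-index manipulation with three factors of $a$ requires care to avoid collapsing the wrong leg. No deep tool is needed beyond the explicit coaction formula for $V \boxtimes A$ recalled earlier in the excerpt and the compatibility of $S$, $\varepsilon$, $\Delta$ with the Hopf subalgebra inclusion.
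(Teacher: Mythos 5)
Your proof is correct and takes essentially the same route as the paper: $(1)\Rightarrow(2)$ is read off from the coaction formula since $w_{(1)}\in B$, and for $(2)\Rightarrow(1)$ the paper likewise takes $W=B$ with its regular coaction and applies the counit to the first tensor leg of $b_{(1)}\otimes a_{(2)}\otimes S(a_{(1)})b_{(2)}a_{(3)}\in (B\otimes A)\otimes B$ to recover $a_{(2)}\otimes S(a_{(1)})ba_{(3)}\in A\otimes B$. The only quibble is notational: applying $\varepsilon$ to the first leg produces $\varepsilon(b_{(1)})b_{(2)}$ rather than $b_{(0)}\varepsilon(b_{(1)})$ as you wrote, but both equal $b$ by the counit axiom, so the argument is unaffected.
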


\begin{proof}
  $(1) \Rightarrow (2)$ follows from the definition of  $\alpha_{V \boxtimes A}$. Conversely, assuming that $(2)$ holds, take $W=B$ the regular $B$-comodule. Then for any $a \in A$ and $b \in B$, we have
$$b_{(1)} \otimes a_{(2)} \otimes S(a_{(1)})b_{(2)}a_{(3)} \in A \otimes A\otimes B$$
and hence 
$$a_{(2)} \otimes S(a_{(1)})ba_{(3)} = a_{(2)} \otimes S(a_1)\varepsilon(b_{(1)})b_{(2)}a_{(3)}\in A \otimes B$$
Thus $(1)$ holds.
\end{proof}

\begin{definition}
 A Hopf subalgebra $B\subset A$ is said to be adjoint if it satisfies the equivalent conditions of Proposition \ref{caradj}.
\end{definition}

Very often adjoint Hopf subalgebras are obtained in the following way. Recall that a Hopf algebra map $f : A \rightarrow L$ is said to be cocentral if $f(a_{(1)})\otimes a_{(2)}= f(a_{(2)})\otimes a_{(1)}$ for any $a \in A$.

\begin{proposition}
 Let  $B \subset A$ be a Hopf subalgebra.  Assume that there exists a cocentral and surjective Hopf algebra map $p : A \rightarrow L$ such that $B=A^{{\rm co} L}$. Then $B\subset A$ is an adjoint Hopf subalgebra.
Conversely if $B\subset A$ is an adjoint Hopf subalgebra, if $A$ and $B$ have bijective antipodes and if $A$ is faithfully flat as a right $B$-module, then there exists a cocentral surjective Hopf algebra map $p : A \rightarrow L$ such that $B=A^{{\rm co} L}$.
\end{proposition}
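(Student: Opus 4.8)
The plan is to treat the two implications separately; in both cases the statement reduces to a short Sweedler computation governed by the interaction of $p$ with the antipode.

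For the first implication, assume $p\colon A\to L$ is cocentral and surjective with $B=A^{\co L}$. Cocentrality gives at once $a_{(1)}\otimes p(a_{(2)})=a_{(2)}\otimes p(a_{(1)})$, whence $A^{\co L}={}^{\co L}A=B$. I will verify condition $(1)$ of Proposition \ref{caradj}. Write $\lambda=(p\otimes\id)\Delta\colon A\to L\otimes A$ for the left $L$-coaction; it is an algebra map, $\lambda(b)=1\otimes b$ for $b\in B={}^{\co L}A$, and $\lambda(S(c))=S_L(p(c_{(2)}))\otimes S(c_{(1)})$. Since we work over a field, $A\otimes-$ is exact, so $A\otimes B={}^{}A\otimes{}^{\co L}A$ is exactly the set of $z\in A\otimes A$ with $(\id_A\otimes\lambda)(z)=(\id_A\otimes(1\otimes-))(z)$. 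Expanding $\lambda$ as an algebra map on $a_{(2)}\otimes S(a_{(1)})b\,a_{(3)}$ and using $\lambda(b)=1\otimes b$ yields
$$a_{(3)}\otimes S_L\big(p(a_{(2)})\big)p(a_{(4)})\otimes S(a_{(1)})b\,a_{(5)}.$$
The heart of the computation is to collapse the middle $L$-factor: cocentrality, in the form $a_{(2)}\otimes p(a_{(3)})=a_{(3)}\otimes p(a_{(2)})$ applied to the consecutive legs, moves the label leg out from between the two legs carrying $p$, after which the antipode identity $S_L(p(c_{(1)}))p(c_{(2)})=\varepsilon(c)1$ turns the $L$-factor into $1$. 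The outcome is $a_{(2)}\otimes 1\otimes S(a_{(1)})b\,a_{(3)}$, which exhibits the element in $A\otimes{}^{\co L}A=A\otimes B$, as required.

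For the converse I would first record that adjointness already forces normality. Applying $\varepsilon\otimes\id$ to condition $(1)$ gives $S(a_{(1)})b\,a_{(2)}\in B$ for all $a\in A$, $b\in B$, i.e. right adjoint stability; applying the (bijective) antipode and substituting $a\mapsto S^{-1}(a)$ then yields left adjoint stability $a_{(1)}b\,S(a_{(2)})\in B$ as well. Hence $B\subset A$ is a normal Hopf subalgebra, $AB^+=B^+A$ is a Hopf ideal, and I set $L=A/AB^+$ with $p\colon A\to L$ the canonical surjection, a Hopf algebra map satisfying $p(b)=\varepsilon(b)1$ for $b\in B$. Since $A$ is faithfully flat over $B$, the antipode of $A$ is bijective, and $\ker p=AB^+=B^+A$, Proposition \ref{refoexact} shows that $\C\to B\to A\to L\to\C$ is exact; in particular $B=A^{\co L}$.

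It remains to show that $p$ is cocentral, and this is where the adjoint hypothesis is used in earnest. Taking $b=1$ in condition $(1)$ gives ${\rm ad}_r(a)=a_{(2)}\otimes S(a_{(1)})a_{(3)}\in A\otimes B$; applying $\id_A\otimes p$ and using $p|_B=\varepsilon(\,\cdot\,)1$ together with $(\id_A\otimes\varepsilon){\rm ad}_r(a)=a$ yields the key identity
$$a_{(2)}\otimes S_L\big(p(a_{(1)})\big)p(a_{(3)})=a\otimes 1 \qquad(\star)$$
in $A\otimes L$. From $(\star)$ cocentrality follows by a direct convolution argument: substituting $(\star)$, applied to $a_{(2)}$, into $\sum a_{(2)}\otimes p(a_{(1)})$ and left-multiplying the $L$-factor by $p(a_{(1)})$ produces $\sum a_{(3)}\otimes p(a_{(1)})S_L(p(a_{(2)}))p(a_{(4)})$; since $p(a_{(1)})S_L(p(a_{(2)}))=\varepsilon(a_{(1)})\varepsilon(a_{(2)})1$ by the antipode axiom, this collapses to $\sum a_{(1)}\otimes p(a_{(2)})$. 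Thus $a_{(1)}\otimes p(a_{(2)})=a_{(2)}\otimes p(a_{(1)})$, which is cocentrality.

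I expect the main obstacle to lie in the converse, and within it in the identity $B=A^{\co L}$: this is the only step that genuinely consumes the faithful flatness and the bijectivity of the antipodes, and it rests on the faithfully flat descent results packaged in Proposition \ref{refoexact} rather than on a direct manipulation. By contrast, once $(\star)$ is available, cocentrality is a purely formal consequence, and the first implication is a bookkeeping computation whose only subtlety is the correct use of cocentrality to collapse the $L$-factor.
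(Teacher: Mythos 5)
Your proof is correct and follows essentially the same route as the paper's: the forward direction is the same Sweedler computation (you phrase it via the left coaction and ${}^{\co L}A$, the paper via the right, which cocentrality makes equivalent), and the converse likewise passes through ad-stability of $B$, the Hopf ideal $AB^+=B^+A$, the identity $a\otimes 1=a_{(2)}\otimes p(S(a_{(1)})a_{(3)})$ to get cocentrality, and faithfully flat descent (your appeal to Proposition \ref{refoexact} and the paper's citation of Schneider's Corollary 1.8 are the same result) to get $B=A^{\co L}$.
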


\begin{proof}
 Let $a \in A$ and $b \in B$. Since $p(b)=\varepsilon(b)1$, we have, using the cocentrality of $p$,
\begin{align*}{\rm id}_A \otimes {\rm id}_B \otimes p & \left(a_{(2)} \otimes (S(a_{(1)})ba_{(3)})_{(1)} \otimes (S(a_{(1)})ba_{(3)})_{(2)}\right) \\
& = {\rm id}_A \otimes {\rm id}_B \otimes p\left(a_{(3)} \otimes S(a_{(2)})b_{(1)}a_{(4)} \otimes S(a_{(1)})b_{(2)}a_{(5)}\right)\\
&= a_{(3)} \otimes S(a_{(2)})b_{(1)}a_{(4)} \otimes pS(a_{(1)})p(b_{(2)})p(a_{(5)}) \\
& = a_{(3)} \otimes S(a_{(2)})ba_{(4)} \otimes pS(a_{(1)})p(a_{(5)})\\
&= a_{(2)} \otimes S(a_{(1)})ba_{(3)} \otimes1
\end{align*}
Hence $a_{(2)} \otimes S(a_{(1)})ba_{(3)} \in A \otimes A^{{\rm co} L}= A \otimes B$: this shows that $B \subset A$ is adjoint.

Conversely, assume  that $B \subset A$ is adjoint, that $A$ and $B$ have bijective antipodes and that $A$ is faithfully flat as a right $B$-module. Then for any $a \in A$ and $b \in B$, we have
$$S(a_{(1)}) b a_{(2)} =  \varepsilon(a_{(2)}) \varepsilon(b_{(1)})S(a_{(1)})b_{(2)}a_{(3)} \in B$$
It is well-known that this implies $B^+A\subset AB^+$, and hence $AB^+\subset B^+A$ by the bijectivity of the antipodes. It follows that $B^+A$ is a Hopf ideal in $A$, and we denote by $p: A \rightarrow A/B^+A=L$ the canonical Hopf algebra surjection. By construction we have $B \subset A^{{\rm co}L}$, and 
for $b \in B$ we have $p(b)=\varepsilon(b)$. Hence we have for any $a \in A$,
$a \otimes 1= a_{(2)}\otimes p(S(a_{(1)})a_{(3)})$, hence
$$a_{(2)} \otimes p(a_{(1)}) = (1\otimes p(a_{(1)})) (a_{(2)} \otimes 1) =
(1 \otimes p(a_{(1)})(a_{(3)} \otimes p(S(a_{(2)})a_{(4)}))=a_{(1)} \otimes p(a_{(2)})$$
and this shows that $p$ is cocentral. Finally we have $B= A^{{\rm co}L}$ by Corollary 1.8 in \cite{schn}.
\end{proof}

We now discuss a condition that ensures that the restriction of a free Yetter-Drinfeld module to an adjoint Hopf subalgebra as in Proposition \ref{caradj} remains a relative projective Yetter-Drinfeld module.

\begin{proposition}\label{sigma}
  Let $B \subset A$ be a Hopf subalgebra with  $B=A^{{\rm co} L}$ for some cocentral and surjective Hopf algebra map $p : A \rightarrow L$. Assume that there exists a linear map $\sigma : L \rightarrow A$ such that
\begin{enumerate}
 \item $p\sigma= {\rm id}_L$;
\item $\sigma(x)_{(1)} \otimes p(\sigma(x)_{(2)})=\sigma(x_{(1)}) \otimes x_{(2)}$, for any $x \in L$;
\item $\sigma(x)_{(1)} S(\sigma(x)_{(3)})\otimes \sigma(x)_{(2)} = 1_B \otimes \sigma(x)$, for any $x \in L$.
\end{enumerate}
Then for any $B$-comodule $W$, the object $W \boxtimes A \in \yd_B^B$ is relative projective. Such a map $\sigma$ exists if $A$ is cosemisimple.
\end{proposition}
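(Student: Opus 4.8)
The plan is to establish relative projectivity of $W \boxtimes A$ in $\yd_B^B$ by realizing it as a direct summand of a free Yetter-Drinfeld module over $B$, via the characterization of Proposition \ref{caracrelproj}. First note that the hypotheses put us in the situation of Proposition \ref{caradj}: the map $p$ being cocentral and surjective with $B = A^{\co L}$ makes $B\subset A$ adjoint, so that $W\boxtimes A$ genuinely lies in $\yd_B^B$, with $B$-coaction $\alpha(w\otimes a)=w_{(0)}\otimes a_{(2)}\otimes S(a_{(1)})w_{(1)}a_{(3)}$ taking values in $(W\boxtimes A)\otimes B$. I would then compare $W\boxtimes A$ with the free object $R(W\boxtimes A)\boxtimes B$ (where $R$ denotes the underlying $B$-comodule) through the canonical surjection $\pi\colon R(W\boxtimes A)\boxtimes B\to W\boxtimes A$, $y\otimes b\mapsto y\leftarrow b$ appearing in the proof of Proposition \ref{caracrelproj}; by that proposition it suffices to produce a section of $\pi$ in $\yd_B^B$.

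The section is to be built from $\sigma$ together with the associated projection $\Pi\colon A\to B$ onto the coinvariants, of cleft type, so that
\[
s : W\boxtimes A \longrightarrow R(W\boxtimes A)\boxtimes B, \qquad s(w\otimes a) = \big(w\otimes \sigma(p(a_{(1)}))\big)\otimes \Pi(a_{(2)}).
\]
Conditions (1) and (2) are exactly what is needed here: condition (2) (right $L$-colinearity of $\sigma$) guarantees that $\Pi(a)$ is $L$-coinvariant, hence lands in $B=A^{\co L}$, and that the reconstruction identity recovering $a$ from $\sigma(p(a_{(1)}))$ and $\Pi(a_{(2)})$ holds, so that $s$ is well defined and $\pi s=\id$. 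Right $B$-linearity of $s$ is immediate, since the right action of $B$ is absorbed into the last tensorand.

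The \emph{main obstacle} is the right $B$-colinearity of $s$, and this is precisely where condition (3) is used. Expanding both $\alpha_{R(W\boxtimes A)\boxtimes B}\circ s$ and $(s\otimes\id)\circ\alpha_{W\boxtimes A}$ in Sweedler notation, the two three-fold coproducts of $\sigma(p(a_{(\cdot)}))$, interwoven with the comodule structure of $W$ and the antipode terms, agree exactly because the combination $\sigma(x)_{(1)}S(\sigma(x)_{(3)})\otimes\sigma(x)_{(2)}$ collapses to $1_B\otimes\sigma(x)$ — that is, because $\sigma(L)$ is coinvariant for the left adjoint coaction with coinvariant value in $B$. Carrying out this bookkeeping carefully is the technical heart of the argument and explains why all three conditions on $\sigma$ are imposed; once $s$ is shown to be a morphism in $\yd_B^B$, Proposition \ref{caracrelproj} yields that $W\boxtimes A$ is relative projective.

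For the final assertion, assume $A$ cosemisimple. Then its quotient $L$ is cosemisimple and $\mathcal M^L$ is a semisimple category. Viewing $A$ as a right $L$-comodule via $(\id\otimes p)\Delta$, the map $p$ is a surjective morphism of right $L$-comodules, so it splits: there is a right $L$-colinear $\sigma\colon L\to A$ with $p\sigma=\id_L$, which already gives conditions (1) and (2). To upgrade this to condition (3) I would exploit the cocentrality of $p$, which forces the left and right $L$-coactions on $A$ to coincide, allowing $\sigma$ to be chosen as a two-sided (bicomodule) splitting — again available by semisimplicity — and then average the resulting section against the Haar integral of $A$ to enforce the adjoint-coinvariance condition (3). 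The remaining point, and the secondary obstacle, is to check that this symmetrized section still satisfies (1) and (2) while now also satisfying (3); here the interplay between cosemisimplicity, the uniqueness of the integral, and cocentrality is what makes the construction possible.
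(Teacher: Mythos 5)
Your overall strategy --- realize $W\boxtimes A$ as a direct summand of the free object $(W\boxtimes A)\boxtimes B$ by splitting the canonical surjection $y\otimes b\mapsto y\leftarrow b$ and invoking Proposition \ref{caracrelproj} --- is exactly the one the paper uses. The gap is in the formula for the section. You propose $s(w\otimes a)=\bigl(w\otimes\sigma(p(a_{(1)}))\bigr)\otimes\Pi(a_{(2)})$ for an unspecified ``cleft type'' projection $\Pi:A\to B$, and assert that condition (2) yields both $\Pi(A)\subset B$ and the reconstruction identity $\sigma(p(a_{(1)}))\,\Pi(a_{(2)})=a$. Neither follows from the hypotheses for a map of this decoupled shape: the natural candidate $\Pi(a)=S(\sigma p(a_{(1)}))a_{(2)}$ gives $\sigma p(a_{(1)})S(\sigma p(a_{(2)}))a_{(3)}$, which equals $a$ only if $\sigma p$ is comultiplicative (equivalently, $\sigma$ is a coalgebra map), while the genuinely cleft-theoretic $\Pi(a)=\sigma^{-1}(p(a_{(1)}))a_{(2)}$ requires $\sigma$ to be convolution invertible; neither is assumed, and condition (2) only gives right $L$-colinearity of $\sigma$. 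The correct section must couple the two tensor slots through the coproduct of $\sigma p(a_{(1)})$ itself,
$$\iota(w\otimes a)= w\otimes \sigma p(a_{(1)})_{(1)}\otimes S\bigl(\sigma p(a_{(1)})_{(2)}\bigr)a_{(2)},$$
for which $\mu\iota=\id$ is immediate from the antipode axiom, membership of the last leg in $B$ follows from condition (2), and $B$-colinearity follows from condition (3) together with the cocentrality of $p$. Your formula cannot be rewritten in this coupled form, so as stated the identity $\pi s=\id$ fails and the argument does not go through.

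For the existence of $\sigma$ when $A$ is cosemisimple, your plan (split $p$ as a map of right $L$-comodules, then symmetrize to force (3)) leaves open precisely the point you flag yourself: that the averaging enforcing (3) preserves (1) and (2). The paper avoids this entirely by packaging all three conditions into a single right $B^{\rm cop}\otimes L$-comodule structure on $A$, namely $a\mapsto a_{(2)}\otimes a_{(1)}S(a_{(3)})\otimes p(a_{(4)})$, for which $p$ is colinear; since $B^{\rm cop}\otimes L$ is cosemisimple, $p$ admits a colinear section, and such a section satisfies (1), (2) and (3) simultaneously with nothing further to verify. You should either adopt that device or carry out the compatibility check you deferred.
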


\begin{proof}
 We first claim that for any $a\in A$, we have
$$\sigma p(a_{(1)})_{(1)} \otimes S(\sigma p(a_{(1)})_{(2)})a_{(2)} \in A \otimes B$$
For any $x \in L$, we have, by (2)
$$\sigma(x)_{(1)} \otimes \sigma(x)_{(2)} \otimes p(\sigma(x)_{(3)})=\sigma(x_{(1)})_{(1)}\otimes  \sigma(x_{(1)})_{(2)} \otimes x_{(2)}$$
and hence for any $a \in A$
 $$\sigma p(a)_{(1)} \otimes \sigma p (a)_{(2)} \otimes p(\sigma p(a)_{(3)})=\sigma p(a_{(1)})_{(1)}\otimes  \sigma p(a_{(1)})_{(2)} \otimes p(a_{(2)})$$
We have
\begin{align*}
({\rm id}_A &\otimes p \otimes {\rm id}_A)   ({\rm id}_A \otimes \Delta) \left(\sigma p(a_{(1)})_{(1)} \otimes S(\sigma p(a_{(1)})_{(2)})a_{(2)}\right) \\
&  =\sigma p(a_{(1)})_{(1)} \otimes Sp(\sigma p(a_{(1)})_{(3)})p(a_{(2)}) \otimes S(\sigma p(a_{(1)})_{(2)})a_{(3)}\\
&= \sigma p(a_{(1)})_{(1)} \otimes Sp(a_{(2)})p(a_{(3)}) \otimes S(\sigma p(a_{(1)})_{(2)})a_{(4)}\\   
&=       \sigma p(a_{(1)})_{(1)} \otimes 1 \otimes 
S(\sigma p(a_{(1)})_{(2)})a_{(2)}                                                                                         
 \end{align*}
and this proves our claim.

We thus get for any $B$-comodule $W$, a linear map
\begin{align*}
\iota : W \boxtimes A & \longrightarrow (W \boxtimes A) \boxtimes B \\
w \otimes a & \longmapsto w \otimes \sigma p(a_{(1)})_{(1)} \otimes S(\sigma p(a_{(1)})_{(2)})a_{(2)}
\end{align*}
that we claim to be a morphism of Yetter-Drinfeld modules over $B$. That $\iota$ is a left $B$-module map is easily checked. Denoting by $\beta$ the $B$-coaction on $(W \boxtimes A) \boxtimes B$, we have
\begin{align*}
  \beta\iota(w \otimes a)  = w_{(0)} &\otimes \sigma p(a_{(1)})_{(2)} \otimes S(\sigma p(a_{(1)})_{(5)})a_{(3)} \otimes \\
& S\left(S(\sigma p(a_{(1)})_{(6)}a_{(2)}\right)S(\sigma p(a_{(1)})_{(1)})w_{(1)}\sigma p(a_{(1)})_{(3)} S(\sigma p(a_{(1)})_{(4)})a_{(4)} \\
 =  w_{(0)} \otimes & \sigma p(a_{(1)})_{(2)} \otimes S(\sigma p(a_{(1)})_{(3)})a_{(3)} \otimes 
S\left(S(\sigma p(a_{(1)})_{(4)})a_{(2)}\right)S(\sigma p(a_{(1)})_{(1)})w_{(1)}a_{(4)} \\
=  w_{(0)} \otimes & \sigma p(a_{(1)})_{(2)} \otimes S(\sigma p(a_{(1)})_{(3)})a_{(3)} \otimes S(a_{(2)})
S\left(\sigma p(a_{(1)})_1S(\sigma p(a_{(1)})_{(4)})\right)w_{(1)}a_{(4)}
\end{align*}
By (3), for $x  \in L$, we have
$$ \sigma(x)_{(2)} \otimes \sigma(x)_{(1)} S(\sigma(x)_{(3)}) =  \sigma(x)\otimes 1_B$$
and hence
$$\sigma(x)_{(2)} \otimes S(\sigma(x)_{(3)})\otimes \sigma(x)_{(1)} S(\sigma(x)_{(4)}) =  \sigma(x)_{(1)}\otimes S(\sigma(x)_{(2)}) \otimes 1_B$$
Thus
\begin{align*}
 \beta\iota(w \otimes a) &= w_{(0)} \otimes \sigma p(a_{(1)})_{(1)} \otimes S(\sigma p(a_{(1)})_{(2)})a_{(3)} \otimes S(a_{(2)})
w_{(1)}a_{(4)} \\
\end{align*}
Now let $\gamma$ be the $B$-coaction on $ W \boxtimes A$. We have
\begin{align*}(\iota\otimes {\rm id}_B)\gamma(w \otimes a)&=\iota\otimes {\rm id}_B(w_{(0)} \otimes a_{(2)} \otimes S(a_{(1)})w_{(1)}a_{(3)})\\
& = w_{(0)} \otimes \sigma p(a_{(2)})_{(1)} \otimes S(\sigma p(a_{(2)})_{(2)})a_{(3)} \otimes S(a_{(1)})w_{(1)}a_{(4)} \\
& = w_{(0)} \otimes \sigma p(a_{(1)})_{(1)} \otimes S(\sigma p(a_{(1)})_{(2)})a_{(3)} \otimes S(a_{(2)})
w_{(1)}a_{(4)}= \beta\iota(w \otimes a)
\end{align*}
where we have used the cocentrality of $p$. It follows that $\iota$ is $B$-colinear, and hence is a morphism of Yetter-Drinfeld modules over $B$. Consider now
\begin{align*}\mu :  (W \boxtimes A) \boxtimes B &\longrightarrow  W \boxtimes A \\
 w \otimes a \otimes b &\longmapsto w \otimes ab
\end{align*}
It is straightforward to check that $\mu$ is a morphism of Yetter-Drinfeld modules over $B$, with $\mu \iota = {\rm id}_{W \boxtimes A}$ and hence we conclude from Proposition \ref{caracrelproj} that  $W \boxtimes A$ is a relative projective Yetter-Drinfeld module over $B$.

For the last assertion, note that $L$ and $A$ both admit right $B^{\rm cop}\otimes L$-comodule structures given by
\begin{align*}
 L &\longrightarrow L \otimes (B^{\rm cop}\otimes L), \quad \quad A \longrightarrow A \otimes (B^{\rm cop}\otimes L) \\
x & \longmapsto x_{(1)}\otimes 1 \otimes x_{(2)}, \quad \quad \ a \longmapsto a_{(2)} \otimes a_{(1)} S(a_{(3)}) \otimes p(a_{(4)})
\end{align*}
and that $p$ is $B^{\rm cop}\otimes L$-colinear. If $A$ is cosemisimple then so is $B$ and so is $L$ (since $p$ is cocentral), hence $B^{\rm cop}\otimes L$ is cosemisimple. Thus there exists a $B^{\rm cop}\otimes L$-colinear section to $p$,  which satisfies our 3 conditions.
\end{proof}

There are also situations where the Hopf algebra in the proposition is not cosemisimple and the map $\sigma$ still exists, see Section 6.

\section{Gerstenhaber-Schack cohomology.}

\subsection{Generalities.}\label{subsec:gen} Let $A$ be a Hopf algebra and let $V$ be a Yetter-Drinfeld module over $A$.
The Gerstenhaber-Schack cohomology of $A$ with coefficients in $V$, that we denote $H_{\rm GS}^*(A,V)$, was introduced in \cite{gs1,gs2} by using an explicit bicomplex. In fact Gerstenhaber-Schack used Hopf bimodules instead of Yetter-Drinfeld modules to define their cohomology, but in view of the equivalence between Hopf bimodules and Yetter-Drinfeld modules, we shall work with the simpler framework of Yetter-Drinfeld modules (a Yetter-Drinfeld version of the Gerstenhaber-Schack bicomplex is provided in \cite{ps}).
A special instance of Gerstenhaber-Schack cohomology is bialgebra cohomology, given by 
$H_b^*(A)=H_{\rm GS}^*(A,\mathbb C)$.

As an example, we have by \cite{pw}, $H_b^*(\C\Gamma) \simeq H^*(\C\Gamma, \C)$ for any discrete group $\Gamma$.
The bialgebra cohomology of $\mathcal O(G)$ for a connected reductive algebraic group $G$ is also described in \cite{pw}, Theorem 9.2, and some finite-dimensional examples are computed in \cite{tai07}. Applications to deformations of pointed Hopf algebras are given in \cite{mw}.

A key result, due to Taillefer \cite{tai04,tai04b}, shows that Gerstenhaber-Schack cohomology is in fact an $\ext$-functor:
$$H_{\rm GS}^*(A,V)\simeq \ext^*_{\yd_A^A}(\mathbb C, V)$$
We will use this description as a definition (we will recall and use the definition based on  a bicomplex in the proof of the forthcoming Proposition \ref{compgs}).

\begin{definition}
 The Gerstenhaber-Schack cohomological dimension of a Hopf  algebra $A$ is
defined to be 
$${\rm cd}_{\rm GS}(A)= {\rm sup}\{n : \ H_{\rm GS}^n(A, V) \not=0 \ {\rm for} \ {\rm some} \ V \in \yd_A^A\}\in \mathbb N \cup \{\infty\}$$   
\end{definition}

If $A$ and $B$ are Hopf algebras having equivalent tensor categories of comodules, then the given tensor equivalence
$F : \mathcal M^A \simeq^\otimes \mathcal M^B$ induces a tensor equivalence $\widehat{F} : \yd_A^A \simeq^\otimes \yd_B^B$ (see e.g. \cite{bicogro,bic}, this is easy to see thanks to the description of the category of Yetter-Drinfeld modules as the weak center of the category of comodules, see \cite{scsurv}). 
Hence we get, for any Yetter-Drinfeld module $V$ over $A$, an isomorphism
$$H_{\rm GS}^*(A,V) \simeq H_{\rm GS}^*(B, \widehat{F}(V))$$  
and moreover ${\rm cd}_{\rm GS}(A)={\rm cd}_{\rm GS}(B)$. These properties are what we call the monoidal invariance of Gerstenhaber-Schack cohomology.

\subsection{Complexes to compute Gerstenhaber-Schack cohomology.}
We now discuss the description of complexes that compute Gerstenhaber-Schack cohomology in particular cases.

Recall that a Hopf algebra $A$ is said to be co-Frobenius if there exists a non-zero $A$-colinear map $A \rightarrow \mathbb C$. By \cite{lin}, $A$ is co-Frobenius if and only if the category $\mathcal M^A$ of right comodules has enough projectives. Finite-dimensional Hopf algebras are co-Frobenius, as well as cosemisimple Hopf algebras.
See \cite{ac,ada} for more examples.

\begin{proposition}\label{resorelative}
 Let $A$ be a co-Frobenius Hopf algebra and let
$$\mathbf P_. = \cdots P_{n+1} \rightarrow P_n \rightarrow \cdots \rightarrow P_1 \rightarrow P_0\rightarrow 0$$
be a resolution of $\mathbb C$ by  projective objects of $\yd_A^A$. We have, for any Yetter-Drinfeld module $V$ over $A$, an isomorphism
$$H_{\rm GS}^*(A,V) \simeq H^*(\Hom_{\yd_A^A}(\mathbf P_.,V))$$
and we have 
$${\rm cd}_{\rm GS}(A) = {\rm inf}\{n: \ \mathbb C\ {\rm admits} \ {\rm a} \ {\rm projective} \  {\rm resolution} \ {\rm of} \ {\rm length} \ n \ {\rm in} \ \yd_A^A\}$$
\end{proposition}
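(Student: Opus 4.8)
The plan is to deduce both claims from the fact, stated just above, that Gerstenhaber-Schack cohomology is the $\ext$-functor $\ext^*_{\yd_A^A}(\C,V)$, together with the special feature of co-Frobenius Hopf algebras that free Yetter-Drinfeld modules are projective objects of $\yd_A^A$. The key observation is that for a co-Frobenius $A$, the comodule category $\mathcal M^A$ has enough projectives (by \cite{lin}), and since the free functor $-\boxtimes A$ is left adjoint to the forgetful functor $\yd_A^A \to \mathcal M^A$, which is exact, it sends projective comodules to projective Yetter-Drinfeld modules. Thus $\yd_A^A$ has enough projectives, and in fact every Yetter-Drinfeld module admits a surjection from a free (hence projective) one via the canonical map $R(X)\boxtimes A \to X$, $x\otimes a \mapsto x \leftarrow a$ introduced in the proof of Proposition \ref{caracrelproj}.

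First I would justify that a resolution $\mathbf P_.$ of $\C$ by projective objects of $\yd_A^A$ exists, so the statement is not vacuous: iterate the canonical surjection from free Yetter-Drinfeld modules, using that syzygies again receive surjections from projectives. Once such a projective resolution is in hand, the first isomorphism is the standard fact from homological algebra that $\ext^*_{\yd_A^A}(\C,V)$, which by Taillefer's result equals $H^*_{\rm GS}(A,V)$, may be computed by applying $\Hom_{\yd_A^A}(-,V)$ to any projective resolution of the first argument $\C$ and taking cohomology. That is, $H^*_{\rm GS}(A,V)\simeq \ext^*_{\yd_A^A}(\C,V)\simeq H^*(\Hom_{\yd_A^A}(\mathbf P_.,V))$, since $\ext$ is independent of the chosen projective resolution.

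For the dimension formula I would argue as in Proposition \ref{cdhopf}. By definition ${\rm cd}_{\rm GS}(A)=\sup\{n : \ext^n_{\yd_A^A}(\C,V)\neq 0 \text{ for some } V\}$. If $\C$ admits a projective resolution of length $n$, then $\ext^i_{\yd_A^A}(\C,-)$ vanishes for $i>n$, giving ${\rm cd}_{\rm GS}(A)\leq n$, hence ${\rm cd}_{\rm GS}(A)$ is at most the infimum on the right. Conversely, if ${\rm cd}_{\rm GS}(A)=n<\infty$, one truncates a projective resolution at stage $n$: the $n$-th syzygy $K_n=\ker(P_{n-1}\to P_{n-2})$ is projective because $\ext^1_{\yd_A^A}(K_n,-)\simeq \ext^{n+1}_{\yd_A^A}(\C,-)=0$ by dimension shifting, so the truncated complex is a projective resolution of length $n$. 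This is the standard argument for group cohomology cited in Proposition \ref{cdhopf}, namely \cite[Chapter VIII, Lemma 2.1]{br}, and it carries over verbatim once enough projectives are available.

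The main obstacle is the very first point: ensuring that $\yd_A^A$ has enough projective objects, which is exactly where the co-Frobenius hypothesis enters and is not automatic for a general Hopf algebra. Everything else is formal homological algebra, but the existence of the resolution $\mathbf P_.$ and the projectivity of free Yetter-Drinfeld modules genuinely rely on \cite{lin} and on Proposition 3.3 of \cite{bic}. Once that input is secured, the delicate ``$\ext$ is balanced / independent of resolution'' and ``truncation preserves projectivity'' steps are routine, and the only care needed is to confirm that the syzygy projectivity criterion via $\ext^1$-vanishing is valid in the abelian category $\yd_A^A$, which it is since that category has enough projectives.
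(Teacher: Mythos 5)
Your proposal is correct and follows essentially the same route as the paper: the co-Frobenius hypothesis supplies enough projectives in $\yd_A^A$ (the paper cites Corollary 3.4 of \cite{bic} for this, which is proved exactly by your adjunction argument), after which the first isomorphism is the standard independence of $\ext^*_{\yd_A^A}(\C,-)$ from the chosen projective resolution combined with Taillefer's identification, and the dimension formula is the syzygy/truncation argument of \cite[Chapter VIII, Lemma 2.1]{br}. The only quibble is that Proposition 3.3 of \cite{bic} concerns the cosemisimple case; in the co-Frobenius setting it is the free modules on \emph{projective} comodules that are projective, which is precisely what your adjunction observation gives.
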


\begin{proof}
We know,  since $A$ is co-Frobenius, that $\yd_A^A$ has enough projective objects (Corollary 3.4 in \cite{bic}). Thus the description of $H_{\rm GS}^*(A,-)$ as an Ext functor  \cite{tai04} yields that
if $\mathbf P_.$ is a a resolution of $\mathbb C$ by projective objects of $\yd_A^A$, we have
$$H_{\rm GS}^*(A,V) \simeq H^*(\Hom_{\yd_A^A}(\mathbf P_.,V))$$
for any Yetter-Drinfeld module $V$. 
The proof of the last statement is similar to the one for group cohomology, see \cite[Chapter VIII, Lemma 2.1]{br}.
\end{proof}

Recall \cite{bic} that for any $n \in \mathbb N$, the Yetter-Drinfeld module
$A^{\boxtimes n}$ is defined as follows:
$$A^{\boxtimes 0}=\C, \ A^{\boxtimes 1} = \C \boxtimes A=A_{\rm coad}, \ A^{\boxtimes 2} = A^{\boxtimes 1} \boxtimes
A, \ \ldots, A^{\boxtimes (n+1)} = A^{\boxtimes n} \boxtimes A, \ldots$$ 
After the obvious vector space identification of 
$A^{\boxtimes n}$ with $A^{\otimes n}$, the right $A$-module structure of $A^{\boxtimes n}$
is given by right multiplication and its comodule structure is given by
\begin{align*}
 {\rm ad}_r^{(n)} : A^{\boxtimes n}&\longrightarrow A^{\boxtimes n} \otimes A \\
 a_1 \otimes \cdots \otimes a_n &\longmapsto a_{1(2)} \otimes \cdots \otimes a_{n(2)}
 \otimes S(a_{1(1)} \cdots a_{n(1)}) a_{1(3)} \cdots a_{n(3)} 
\end{align*}

\begin{proposition}\label{compgs}
Let $A$ be a  Hopf algebra and let $V$ be a Yetter-Drinfeld module over $A$. 
Assume that one of the following conditions holds.
\begin{enumerate}
 \item $A$ is cosemisimple.
\item $V$ is relative injective.
\end{enumerate}
Then the Gerstenhaber-Schack cohomology $H^*_{\rm GS}(A,V)$ is the cohomology of the complex
 $$ 0 \rightarrow {\rm Hom}^A(\C,V) \overset{\partial}\longrightarrow \Hom^A(A^{\boxtimes 1},V) \overset{\partial}\rightarrow
 \cdots \overset{\partial}\rightarrow\Hom^A(A^{\boxtimes n}, V) \overset{\partial}\longrightarrow \Hom^A(A^{\boxtimes n+1}, V) \overset{\partial}\longrightarrow \cdots$$
where the differential $\partial : \Hom^A(A^{\boxtimes n}, V) \longrightarrow \Hom^A(A^{\boxtimes n+1},V)$
is given by 
\begin{align*}
\partial(f)(a_1 \otimes \cdots \otimes a_{n+1}) = &\varepsilon(a_1)
f(a_2 \otimes \cdots \otimes a_{n+1}) + \sum_{i=1}^{n}(-1)^i f(a_1 \otimes \cdots \otimes a_i a_{i+1} \otimes \cdots \otimes a_{n+1}) \\
&+ (-1)^{n+1} f(a_1 \otimes \cdots \otimes a_{n}) \cdot a_{n+1}
\end{align*}
\end{proposition}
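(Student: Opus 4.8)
The plan is to treat both cases through a single complex, obtained by applying the free/forgetful adjunction of Section~4 to the standard free Yetter-Drinfeld resolution of $\C$. Recall from \cite{bic} that the bar-type complex
$$\cdots \to A^{\boxtimes(n+1)} \to A^{\boxtimes n} \to \cdots \to A^{\boxtimes 1} \to \C \to 0$$
is a resolution of $\C$ in $\yd_A^A$ whose terms are free (hence relative projective by Proposition \ref{caracrelproj}) and which splits as a sequence of comodules. Writing $\mathbf{P}_\bullet$ for this resolution with $P_n = A^{\boxtimes(n+1)}$, the adjunction $\Hom^A(W,R(X))\simeq \Hom_{\yd_A^A}(W\boxtimes A, X)$ specializes, with $W=R(A^{\boxtimes n})$ and $X=V$, to natural isomorphisms $\Hom_{\yd_A^A}(A^{\boxtimes(n+1)},V)\simeq \Hom^A(A^{\boxtimes n},V)$. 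These identify the complex $\Hom_{\yd_A^A}(\mathbf{P}_\bullet,V)$ with the complex displayed in the statement. The first (routine) point to check is that, under this identification, the dual of the bar differential on $\mathbf{P}_\bullet$ is exactly the differential $\partial$ written in the statement; this is a transport-of-structure computation from the explicit formulas for $\alpha_{V\boxtimes A}$, $\mathrm{ad}_r^{(n)}$ and the adjunction. In particular $\partial^2=0$ is automatic, so it then suffices to show that this complex computes $H^*_{\rm GS}(A,V)$, and the two hypotheses enter here through different mechanisms.

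If $A$ is cosemisimple, then it is co-Frobenius and, by Proposition~3.3 in \cite{bic}, every free Yetter-Drinfeld module is a projective object of $\yd_A^A$. Hence $\mathbf{P}_\bullet$ is a genuine projective resolution of $\C$, and Proposition \ref{resorelative} applies verbatim to give $H^*_{\rm GS}(A,V)\simeq H^*(\Hom_{\yd_A^A}(\mathbf{P}_\bullet,V))$. No assumption on $V$ is needed in this case, and combined with the first paragraph this already settles case~(1).

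If instead $V$ is relative injective and $A$ is arbitrary, one cannot invoke Proposition \ref{resorelative}, since $\yd_A^A$ need not have enough projectives and the $A^{\boxtimes n}$ are only relative projective. Here I would return to the bicomplex definition of Gerstenhaber-Schack cohomology together with the relative homological algebra of \cite{shst}, which exhibits $H^*_{\rm GS}(A,V)$ as the total cohomology of a double complex built from the free (relative projective) resolution $\mathbf{P}_\bullet$ of $\C$ in one direction and a co-free (relative injective) resolution of $V$ in the other. The decisive observation is that a relative injective $V$ is its own relative injective resolution: the sequence $0 \to V \to V \to 0$ with identity map splits as modules and has relative injective terms, so the co-free direction of the bicomplex is concentrated in degree zero. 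Consequently the associated spectral sequence degenerates and the bicomplex collapses onto the single complex $\Hom_{\yd_A^A}(A^{\boxtimes(\bullet+1)},V)$, which by the adjunction of the first paragraph is precisely the complex in the statement.

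The step requiring the most care, and the main obstacle, is this last case: one must verify that the \cite{shst} bicomplex genuinely computes $H^*_{\rm GS}(A,V)$ and that the filtration along the co-free (equivalently, Hochschild/module) direction degenerates at $E_1$ exactly because of the relative injectivity of $V$, so that no higher terms contribute. The differential-matching of the first paragraph, though only a computation, is the other error-prone point, as it must simultaneously reproduce the coalgebra (Cartier) terms $\varepsilon(a_1)f(a_2\otimes\cdots\otimes a_{n+1})$ and $\sum_{i}(-1)^i f(a_1\otimes\cdots\otimes a_ia_{i+1}\otimes\cdots\otimes a_{n+1})$ and the outer module-action term $(-1)^{n+1}f(a_1\otimes\cdots\otimes a_n)\cdot a_{n+1}$ with the correct signs.
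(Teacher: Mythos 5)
Your proposal is correct and follows essentially the same route as the paper: the standard free Yetter--Drinfeld resolution of $\C$ identified via the free/forgetful adjunction, projectivity of free objects plus Proposition \ref{resorelative} in the cosemisimple case, and in the relative injective case the Shnider--Sternberg bicomplex with the trivial resolution $0\to V\to V\to 0$ collapsing to a single column. The only cosmetic difference is that you phrase the collapse as a degenerating spectral sequence where the paper simply observes the bicomplex has one non-zero column; these are the same observation.
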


\begin{proof}
By \cite{bic}, Proposition 3.6, the standard resolution of $\mathbb C_\varepsilon$ yields in a fact resolution of $\C$ by free Yetter-Drinfeld modules in the category $\yd_A^A$
$$\cdots \longrightarrow  A^{\boxtimes n+1} \longrightarrow A^{\boxtimes n} \longrightarrow \cdots 
\longrightarrow A^{\boxtimes 2}\longrightarrow A^{\boxtimes 1} \longrightarrow 0 $$
where each differential is given by
\begin{align*}
 A^{\boxtimes n+1} &\longrightarrow A^{\boxtimes n} \\
 a_1 \otimes \cdots \otimes a_{n+1} &\longmapsto \varepsilon(a_1) a_2 \otimes \cdots \otimes a_{n+1} + 
 \sum_{i=1}^n(-1)^i a_1 \otimes \cdots \otimes a_i a_{i+1} \otimes \cdots \otimes a_{n+1}
\end{align*}
If $A$ is cosemimple, then free Yetter-Drinfeld modules are projective, and we get, after standard identification using the fact that the free functor is left adjoint, the result by 
Proposition \ref{resorelative}. 

To prove the result if the second condition holds, we recall the definition of Gerstenhaber-Schack cohomology using a bicomplex \cite{shst}. 
Let $V,W$ be objects in $\yd_A^A$, let $P_{\bullet} \rightarrow V\rightarrow 0$ be a relative projective resolution of $V$ (this means that the objects $P_q$, $q\geq 0$, are relative projective and the the sequence $P_{\bullet} \rightarrow V\rightarrow 0$ splits as a sequence of comodules), and let $0\rightarrow W \rightarrow I^\bullet$ be a relative injective resolution of $W$ (this means that the objects $I^p$, $p\geq 0$, are relative injective and the the sequence $0\rightarrow W \rightarrow I^{\bullet}$ splits as a sequence of modules).
We then can form, in a standard way, the bicomplex $C^{\bullet, \bullet}(V,W)={\rm Hom}_{\yd_A^A}(P_\bullet, I^\bullet)$. The uniqueness, up to homotopy, of the previous resolutions (\cite{shst}, chapter 10) shows that the cohomology of the bicomplex $C^{\bullet, \bullet}(V,W)={\rm Hom}_{\yd_A^A}(P_\bullet, I^\bullet)$ is independent of the choice of these resolutions, and  is the Gertenhaber-Schack cohomology of the Yetter-Drinfeld modules $V$ and $W$ (see \cite{tai04b,tai04} as well). When $V=\C$, we get the Gerstenhaber Schack-cohomology $H^*_{\rm GS}(A,W)$ as defined in Subsection \ref{subsec:gen}, by \cite{tai04}.

Assuming that $W$ is relative injective, we can use the relative injective resolution $$0 \rightarrow W \rightarrow W \rightarrow 0 \rightarrow \cdots \rightarrow 0 \cdots$$
together with the standard resolution of the trivial object $\C$ as above (which is indeed a relative projective resolution of $\mathbb C$), and we get a bicomplex with only one non-zero column, which is, again using the fact that the free functor is left adjoint, easily identified with the complex in the statement of the proposition.
\end{proof}

\begin{rem}
When $V = \mathbb C$ is the trivial Yetter-Drinfeld module, the complex in Theorem \ref{compgs} is the same as the one defined in \cite{ger} in the study of additive deformations of Hopf algebras, which are of interest in quantum probability. This complex is also the complex that defines the so-called Davydov-Yetter cohomology of the tensor category $\mathcal M^A$ (\cite{dav,ye}, see \cite{egno}, Chapter 7).
\end{rem}

\begin{rem}\label{nonembedd}
Let $V$ be a Yetter-Drinfeld module over $A$. The complex in Proposition \ref{compgs} is a subcomplex of the complex that computes the Hochshild cohomology $H^*(A, {_\varepsilon \!V})$, where the left $A$-module structure on ${_\varepsilon \!V}$ is the one induced by the counit and the right module structure is the original one. We thus have a linear map $$H^*_{\rm GS}(A,V) \rightarrow H^*(A, {_\varepsilon \!V})\simeq {\rm Ext}_A(\mathbb C_\varepsilon, V)$$
which is not injective in general. Indeed for $q \in \mathbb C^*$ generic ($q=\pm 1$ or not a root of unity), we have $H^3_{\rm GS}(\mathcal O(\SL_q(2)), \mathbb C) \simeq \mathbb C$ (see \cite{bic}), while  $H^3(\mathcal O(\SL_q(2)),  {_\varepsilon \! \mathbb C}_\varepsilon)=0$ if $q^2\not=1$ (see e.g. \cite{hk}). In Subsection \ref{cosemi} we provide some conditions that ensure that the above map is injective.
\end{rem}

\subsection{Relation with Hochschild cohomology}
We are  ready to show that the Gerstenhaber-Schack cohomology of a Hopf algebra determines its Hochschild cohomology.

\begin{theorem}\label{gsh}
 Let $A$ be a  Hopf algebra and let $M$ be an $A$-bimodule. Endow $M \otimes A$ with a Yetter-Drinfeld module structure defined by
$$m \otimes a \mapsto m \otimes a_{(1)} \otimes a_{(2)}, \ (m \otimes a)\leftarrow b= S(b_{(2)}).m.b_{(3)}\otimes S(b_{(1)})ab_{(4)}, \ a,b \in A, \ m \in M$$  
and denote by $M'\#A$ the resulting Yetter-Drinfeld module. Then we have an isomorphism
$$H^*(A,M) \simeq H_{\rm GS}^*(A, M'\#A)$$
In particular we have ${\rm cd}(A) \leq {\rm cd}_{\rm GS}(A)$. 
\end{theorem}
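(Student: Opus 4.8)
The plan is to reduce the statement to an isomorphism of $\ext$-groups and deduce it from the co-free/forgetful adjunction. First I would observe that the Yetter-Drinfeld module $M'\#A$ in the statement is nothing but the co-free Yetter-Drinfeld module $-\#A$ applied to the right $A$-module $M'$, where $M'$ is $M$ equipped with the right action $x\leftarrow a=S(a_{(1)})\cdot x\cdot a_{(2)}$. Indeed, unwinding the co-free action $(x\otimes a)\leftarrow b=x\cdot b_{(2)}\otimes S(b_{(1)})ab_{(3)}$ with $x\cdot c$ interpreted as $x\leftarrow c=S(c_{(1)})xc_{(2)}$ and using coassociativity to split $b$ into four legs reproduces exactly the formula $(m\otimes a)\leftarrow b=S(b_{(2)})\cdot m\cdot b_{(3)}\otimes S(b_{(1)})ab_{(4)}$ of the statement, while the coaction ${\rm id}_M\otimes\Delta$ matches. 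This is the only hands-on computation and is routine. Recalling from the preliminaries that $H^*(A,M)\simeq\ext^*_A(\C_\varepsilon,M')$ and from Taillefer's description (used as a definition) that $H_{\rm GS}^*(A,M'\#A)\simeq\ext^*_{\yd_A^A}(\C,M'\#A)$, the theorem reduces to producing a natural isomorphism
$$\ext^*_A(\C_\varepsilon,M')\simeq\ext^*_{\yd_A^A}(\C,M'\#A).$$

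The key is the adjunction $\Hom_{\yd_A^A}(X,M'\#A)\simeq\Hom_A(U(X),M')$, natural in $X$, where $U:\yd_A^A\to\mathcal M_A$ is the forgetful functor, together with the observation $U(\C)=\C_\varepsilon$. I would argue as follows. The functor $U$ is exact, since exactness in both $\yd_A^A$ and $\mathcal M_A$ is tested on underlying vector spaces; being right adjoint to the exact functor $U$, the co-free functor $-\#A$ therefore preserves injective objects. Moreover $-\#A$ is itself exact, because its underlying functor is $-\otimes A$, which is exact over a field. Consequently, if $0\to M'\to I^\bullet$ is an injective resolution of $M'$ in $\mathcal M_A$, then $0\to M'\#A\to I^\bullet\#A$ is a genuine injective resolution of $M'\#A$ in $\yd_A^A$.

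Applying $\Hom_{\yd_A^A}(\C,-)$ to this resolution and using naturality of the adjunction in the second variable yields an isomorphism of cochain complexes
$$\Hom_{\yd_A^A}(\C,I^\bullet\#A)\simeq\Hom_A(U(\C),I^\bullet)=\Hom_A(\C_\varepsilon,I^\bullet),$$
whose cohomology gives precisely $\ext^*_{\yd_A^A}(\C,M'\#A)\simeq\ext^*_A(\C_\varepsilon,M')$. Chaining the three isomorphisms produces $H^*(A,M)\simeq H_{\rm GS}^*(A,M'\#A)$. The cohomological dimension inequality then follows formally: any $A$-bimodule $M$ with $H^n(A,M)\neq0$ yields a Yetter-Drinfeld module $M'\#A$ with $H_{\rm GS}^n(A,M'\#A)\neq0$, so ${\rm cd}(A)\leq{\rm cd}_{\rm GS}(A)$. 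The main point to get right is the homological bookkeeping, namely that $-\#A$ preserves injectives and is exact, so that $I^\bullet\#A$ is a bona fide injective resolution computing the $\ext$ on the Yetter-Drinfeld side; the adjunction identity $U(\C)=\C_\varepsilon$ and the passage to cohomology are then formal, and the only actual computation is the routine Sweedler check that $M'\#A$ is co-free.
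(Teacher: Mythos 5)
Your argument is correct, and it takes a genuinely different route from the paper. The paper also begins by recognizing $M'\#A$ as the co-free Yetter--Drinfeld module on $M'$, but then uses relative injectivity (Proposition \ref{caracrelinj}) to invoke the explicit cochain complex of Proposition \ref{compgs}, and finishes by exhibiting a term-by-term isomorphism $f\mapsto(\mathrm{id}_M\otimes\varepsilon)f$ between $\Hom^A(A^{\boxtimes\bullet},M'\#A)$ and the Hochschild complex $\Hom(A^{\otimes\bullet},M')$, the only real work being the Sweedler computation that this map commutes with the differentials. You instead run a purely formal derived-functor argument: the composite $\Hom_{\yd_A^A}(\C,-)\circ(-\#A)$ is naturally isomorphic to $\Hom_A(\C_\varepsilon,-)$, and since $-\#A$ is exact and, being right adjoint to the exact forgetful functor, preserves injectives, the derived functors agree. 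All the ingredients you use are available in the paper's framework ($\mathcal M_A$ has enough injectives, exactness in $\yd_A^A$ is detected on vector spaces, and the Ext defining $H^*_{\rm GS}$ is the ordinary derived-functor Ext in $\yd_A^A$, which has enough injectives by the very embedding $V\hookrightarrow U(V)\#A$ you implicitly rely on). What each approach buys: yours is shorter and eliminates both the explicit complexes and the Sweedler verification; the paper's explicit identification of cochain complexes is not wasted effort, however, since it is precisely the concrete map $H^*_{\rm GS}(A,V)\to H^*(A,{_\varepsilon V})$ arising from these complexes that is later shown to be injective in the cosemisimple Kac-type case (Proposition \ref{embeddgs}), which your abstract isomorphism does not by itself produce.
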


\begin{proof}
The Yetter-Drinfeld module $M'\#A$ is the co-free Yetter-Drinfeld associated to  the right $A$-module $M'$ of Section 2. It is thus a relative injective Yetter-Drinfeld module (Proposition \ref{caracrelinj}), and we can use the complex of Proposition \ref{compgs} to compute its Gerstenhaber-Schack cohomology.

Recall that since $H^*(A,M)\simeq \ext_A^*(\mathbb C_\varepsilon, M')$ (Section 2), the complex to compute 
$H^*(A,M)$ is 
$$ 0 \longrightarrow {\rm Hom}(\C,M') \overset{\partial}\longrightarrow \Hom(A,M') \overset{\partial}\longrightarrow
\cdots \overset{\partial}\longrightarrow\Hom(A^{\otimes n}, M') \overset{\partial}\longrightarrow \Hom(A^{\otimes n+1}, M') \overset{\partial}\longrightarrow \cdots$$
where the differential $\partial : \Hom(A^{\otimes n}, M') \longrightarrow \Hom(A^{\otimes n+1}, M')$
is given by 
\begin{align*}
\partial(f)(a_1 \otimes \cdots \otimes a_{n+1}) = &\varepsilon(a_1)
f(a_2 \otimes \cdots \otimes a_{n+1}) + \sum_{i=1}^{n}(-1)^i f(a_1 \otimes \cdots \otimes a_i a_{i+1} \otimes \cdots \otimes a_{n+1}) \\
&+ (-1)^{n+1} S(a_{n+1(1)})\cdot f(a_1 \otimes \cdots \otimes a_{n}) \cdot a_{{n+1}(2)}
\end{align*}
For all $n \geq 0$,  we have  linear isomorphisms
\begin{align*}
 \Hom^A(A^{\boxtimes n}, M'\#A) & \longrightarrow {\rm Hom}(A^{\otimes n}, M') \\
f & \longmapsto ({\rm id}_M \otimes \varepsilon) f 
\end{align*}
For $f \in  \Hom(A^{\boxtimes n}, M'\#A)$ and $a_1, \ldots , a_{n} \in A$, with $f(a_1\otimes \cdots \otimes a_{n})=
\sum_i m_i\otimes b_i$, we have
\begin{align*}
 {\rm id}_M \otimes &\varepsilon(f(a_1 \otimes \cdots \otimes a_{n})\leftarrow a_{n+1})\\
& =   {\rm id}_M \otimes \varepsilon\left(\sum_iS(a_{n+1(2)}).m_i.a_{n+1(3)} \otimes S(a_{n+1(1)})b_ia_{n+1(4)}\right)\\
&= \sum_i\varepsilon(b_i) S(a_{n+1(1)}).m_i.a_{n+1(2)} \\
&= S(a_{n+1(1)}).\left(({\rm id}_M \otimes \varepsilon)(f(a_1 \otimes \cdots \otimes a_{n+1})\right).a_{n+1(2)}
\end{align*}
From this computation it follows easily that the previous isomorphisms commute with the differentials (as already said, the one for Gerstenhaber-Schack cohomology being given by the complex of Proposition \ref{compgs}), and hence the complexes that define both cohomologies are isomorphic.
\end{proof}

We get the results announced in the introduction, providing a partial answer to Question \ref{ques}.

\begin{corollary}
Let  $A$ and $B$ be  Hopf algebras  such that 
there exists an equivalence of linear tensor categories
 $\mathcal M^A \simeq^{\otimes} \mathcal M^B$. Then there exist two functors 
$$F_1 : {\!_A\mathcal M}_A \rightarrow  \yd_B^B\ {\rm and} \ F_2 : {\!_B\mathcal M}_B \rightarrow \yd_A^A$$
such that for any $A$-bimodule $M$ and any  $B$-bimodule $N$, we have
$$H^*(A,M)\simeq H_{\rm GS}^*(B,F_1(M)) \ {\rm and} \ H^*(B,N)\simeq H_{\rm GS}^*(A,F_2(N))$$
In particular we have
$\max({\rm cd}(A),{\rm cd}(B))\leq {\rm cd}_{\rm GS}(A)={\rm cd}_{\rm GS}(B)$.
\end{corollary}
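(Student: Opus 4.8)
The plan is to obtain this corollary by directly composing the isomorphism of Theorem \ref{gsh} with the monoidal invariance of Gerstenhaber-Schack cohomology recorded in Subsection \ref{subsec:gen}, so that no new homological input is needed. Let $G_A : {\!_A\mathcal M}_A \to \yd_A^A$ denote the functor $M \mapsto M'\#A$ constructed in Theorem \ref{gsh}, and similarly let $G_B : {\!_B\mathcal M}_B \to \yd_B^B$, $N \mapsto N'\#B$ be its analogue for $B$. The given tensor equivalence $F : \mathcal M^A \simeq^\otimes \mathcal M^B$ induces, as explained in Subsection \ref{subsec:gen}, a tensor equivalence $\widehat F : \yd_A^A \simeq^\otimes \yd_B^B$ together with a quasi-inverse $\widehat F^{-1} : \yd_B^B \simeq^\otimes \yd_A^A$ (the latter being the equivalence induced by $F^{-1}$).

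I would then simply set $F_1 = \widehat F \circ G_A$ and $F_2 = \widehat F^{-1} \circ G_B$. For an $A$-bimodule $M$, Theorem \ref{gsh} gives $H^*(A,M) \simeq H_{\rm GS}^*(A, G_A(M))$, and the monoidal invariance of Gerstenhaber-Schack cohomology, applied to the Yetter-Drinfeld module $G_A(M)$, gives $H_{\rm GS}^*(A, G_A(M)) \simeq H_{\rm GS}^*(B, \widehat F(G_A(M))) = H_{\rm GS}^*(B, F_1(M))$; composing the two yields the first stated isomorphism. The second isomorphism is obtained symmetrically by applying Theorem \ref{gsh} to the $B$-bimodule $N$ and then transporting the coefficients along $\widehat F^{-1}$, using monoidal invariance in the direction $\mathcal M^B \simeq^\otimes \mathcal M^A$.

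For the dimension inequality I would argue as follows. If $H^n(A,M) \neq 0$ for some $A$-bimodule $M$, then the first isomorphism forces $H_{\rm GS}^n(B, F_1(M)) \neq 0$, whence $n \leq {\rm cd}_{\rm GS}(B)$; taking the supremum over all $n$ and all $M$ gives ${\rm cd}(A) \leq {\rm cd}_{\rm GS}(B)$, and the symmetric argument gives ${\rm cd}(B) \leq {\rm cd}_{\rm GS}(A)$. Combining these with the equality ${\rm cd}_{\rm GS}(A) = {\rm cd}_{\rm GS}(B)$, which is part of the monoidal invariance, produces $\max({\rm cd}(A),{\rm cd}(B)) \leq {\rm cd}_{\rm GS}(A) = {\rm cd}_{\rm GS}(B)$.

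Since every ingredient is already in place, I do not expect a genuine obstacle here; the statement is essentially a formal consequence of Theorem \ref{gsh} and monoidal invariance. The only point demanding care is the bookkeeping of the directions of the functors, namely ensuring that $\widehat F$ carries the coefficients attached to $A$-bimodules into $\yd_B^B$ while $\widehat F^{-1}$ performs the reverse passage, so that the two displayed isomorphisms land in the correct categories and the supremum defining each cohomological dimension is compared against the right partner.
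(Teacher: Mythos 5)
Your argument is correct and is exactly the paper's proof: the author likewise defines $F_1$ and $F_2$ by composing the functor $M\mapsto M'\#A$ of Theorem \ref{gsh} with the induced equivalence $\widehat F$ (resp.\ its quasi-inverse) from Subsection \ref{subsec:gen}, and deduces the dimension inequality immediately. Your write-up merely spells out the bookkeeping that the paper leaves implicit.
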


\begin{proof}
The construction in the previous theorem clearly yields a functor ${\!_A\mathcal M}_A \rightarrow  \yd_A^A$, that we compose with the functor $\yd_A^A \rightarrow \yd_B^B$ from the discussion at the end of subsection 5.1, to get the announced functor $F_1$, and similarly the functor $F_2$. The last claim follows immediately.
\end{proof}

\begin{rem}\label{rem:lr}
Recall that Question \ref{ques2}, motivated by Theorem \ref{gsh}, asks if ${\rm cd}(A)= {\rm cd}_{\rm GS}(A)$ for any  Hopf algebra $A$. 
Question \ref{ques2} has indeed a positive answer in the finite-dimensional case: if $A$ is semisimple, then it is cosemisimple by the Larson-Radford theorem \cite{lr}, and hence $\yd_A^A$ is semisimple (since
the Drinfeld double $D(A)$ is then semisimple, see \cite{rad93}), so we have ${\rm cd}(A)=0={\rm cd}_{\rm GS}(A)$.
If $A$ is not semisimple, then ${\rm cd}(A)=\infty={\rm cd}_{\rm GS}(A)$.
It thus follows that a positive answer to Question \ref{ques2} would provide a natural infinite-dimensional generalization to the above mentioned Larson-Radford theorem.

The characteristic zero assumption is indeed necessary: if $A$ is a finite-dimensional semisimple non cosemisimple Hopf algebra, the base field being then necessarily of characteristic $>0$ \cite{lr}, then ${\rm cd}(A)=0<{\rm cd}_{\rm GS}(A)=\infty$.

See the next subsection for some partial results in the cosemisimple case.
\end{rem}

\subsection{Cosemisimple Hopf algebras}\label{cosemi} We now provide some more precise partial answers to Questions \ref{ques} and \ref{ques2} when the Hopf algebra is cosemisimple and of Kac type (recall that this means that $S^2={\rm id}$).

\begin{proposition}\label{embeddgs}
 Let $A$ be a cosemisimple Hopf algebra of Kac type, and let $V$ be a Yetter-Drinfeld module over $A$. Then the natural linear map 
$$H^*_{\rm GS}(A,V) \rightarrow H^*(A, {_\varepsilon \!V})$$
arising from Proposition \ref{compgs} is injective.
\end{proposition}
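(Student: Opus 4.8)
The plan is to compute both sides from a single resolution and then build a Haar-averaging retraction. Recall from the proof of Proposition \ref{compgs} that $\mathbf{P}_\bullet$, with $\mathbf{P}_n = A^{\boxtimes(n+1)}$, is a resolution of $\mathbb{C}$ in $\yd_A^A$ whose terms, after forgetting the comodule structure, are free right $A$-modules. Hence it simultaneously computes $H_{\rm GS}^*(A,V) = H^*(\Hom_{\yd_A^A}(\mathbf{P}_\bullet,V))$ and, via $H^*(A,{_\varepsilon\!V}) \simeq \ext_A^*(\mathbb{C}_\varepsilon,V) = H^*(\Hom_A(\mathbf{P}_\bullet,V))$, the Hochschild cohomology. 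Under the identifications $\Hom_{\yd_A^A}(A^{\boxtimes(n+1)},V)\cong\Hom^A(A^{\boxtimes n},V)$ and $\Hom_A(A^{\boxtimes(n+1)},V)\cong\Hom(A^{\otimes n},V)$ these recover exactly the complexes of Proposition \ref{compgs} and of Remark \ref{nonembedd}, and the natural map of the statement is induced by the inclusion of complexes $\iota : \Hom_{\yd_A^A}(\mathbf{P}_\bullet,V)\hookrightarrow\Hom_A(\mathbf{P}_\bullet,V)$ (a Yetter-Drinfeld morphism is in particular a colinear module map). It therefore suffices to produce a chain-map retraction $r$ with $r\iota=\id$, for then $\iota_*$ is a split monomorphism.

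Let $h:A\to\mathbb{C}$ be the normalized two-sided Haar integral afforded by cosemisimplicity. For $f\in\Hom_A(A^{\boxtimes n},V)$ I would set
$$\bar{f}(p) = f(p_{(0)})_{(0)}\, h\bigl(f(p_{(0)})_{(1)}\, S(p_{(1)})\bigr),$$
the averaging of $f$ onto colinear maps attached to the Haar integral; since every value is a finite Sweedler sum this is well defined despite $A^{\boxtimes n}$ being infinite-dimensional. Using only the invariance of $h$ and coassociativity I would check that $\bar{f}$ is $A$-colinear, and using $h(1)=1$ together with the antipode axiom $p_{(1)}S(p_{(2)})=\varepsilon(p_{(1)})1$ I would check that $\bar{f}=f$ whenever $f$ is already colinear. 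Thus, granting that $\bar{f}$ is again a module map, $r:=\overline{(\,\cdot\,)}$ is a projection of $\Hom_A(\mathbf{P}_\bullet,V)$ onto the subcomplex $\Hom_{\yd_A^A}(\mathbf{P}_\bullet,V)$.

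That $r$ is a morphism of complexes is then automatic and requires no extra hypothesis: the differentials of $\mathbf{P}_\bullet$ are Yetter-Drinfeld morphisms, in particular colinear, so $\partial(p)_{(0)}\otimes\partial(p)_{(1)} = \partial(p_{(0)})\otimes p_{(1)}$, and substituting this into the defining formula gives $\overline{g\circ\partial}=\bar{g}\circ\partial$ for every $g$. Hence $r$ commutes with the Hochschild differential, $r$ is a chain-map retraction of $\iota$, and the map of the statement $\iota_*:H^*_{\rm GS}(A,V)\to H^*(A,{_\varepsilon\!V})$ is injective.

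The main obstacle is the one deferred point, namely that $\bar{f}$ is a right $A$-module map, and this is precisely where the Kac type hypothesis is used. Expanding $\bar{f}(p\leftarrow a)$ by means of the Yetter-Drinfeld axioms for $A^{\boxtimes n}$ and for $V$ and the module-map property of $f$, one is left with an expression whose reassembly into $\bar{f}(p)\leftarrow a$ demands a cyclic rearrangement of the argument of $h$. The Haar integral of a cosemisimple Hopf algebra satisfies $h(xy)=h\bigl(y\,S^2(x)\bigr)$, so this rearrangement is legitimate exactly when $S^2=\id$, i.e. when $A$ is of Kac type, in which case $h$ is a trace and the verification closes. This is consistent with Remark \ref{nonembedd}: for $\mathcal{O}(\SL_q(2))$ with $q^2\neq 1$, cosemisimple but not of Kac type, $h$ is not a trace, the retraction does not exist, and injectivity indeed fails.
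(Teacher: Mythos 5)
Your proposal is correct and takes essentially the same route as the paper: the paper's proof also uses the Haar-integral averaging operator $M(f)(v)=h\bigl(f(v_{(0)})_{(1)}S(v_{(1)})\bigr)f(v_{(0)})_{(0)}$ as a retraction onto the subcomplex of colinear maps, with $S^2=\id$ and the traciality of $h$ supplying the one nontrivial compatibility. The only difference is bookkeeping --- you isolate the Kac-type hypothesis in the module-map property of $\bar{f}$ on the resolution side, whereas the paper verifies $\partial M=M\partial$ directly on the identified cochain complexes; under the adjunction identifications these are the same computation.
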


\begin{proof}
 Let $h$ be the Haar integral on $A$. Recall that for any $A$-comodules $V$ and $W$, we have a surjective averaging operator 
\begin{align*}
 M : {\rm Hom}(V,W) & \longrightarrow {\rm Hom}^A(V,W) \\
f & \longmapsto M(f), \ M(f)(v)= h\left(f(v_{(0)})_{(1)}S(v_{(1)})\right)f(v_{(0)})_{(0)}
\end{align*}
with $f \in {\rm Hom}^A(V,W)$ if and only if $M(f)=f$.
Now let $V$ be our given Yetter-Drinfeld module, and let $f \in {\rm Hom}(A^{\otimes n}, V)$. We thus have
$M(f) \in  {\rm Hom}^A(A^{\boxtimes n}, V)$, with 
\begin{align*}M(f)(a_1 \otimes \cdots \otimes a_n) = &h\left(f(a_{1(2)} \otimes \cdots \otimes a_{n(2)})_{(1)}S(a_{1(3)} \cdots a_{n(3)})S^2(a_{1(1)} \cdots a_{n(1)})\right) \\ & f(a_{1(2)} \otimes \cdots \otimes a_{n(2)})_{(0)}\end{align*}
It is a tedious but straightforward verification to check that, under our assumption, we have $\partial(M(f))= M(\partial(f))$. 
To convince the reader, we present the verification at $n=2$. Let $f \in  {\rm Hom}(A^{\otimes 2}, V)$. We have 
\begin{align*}
\partial(M(f)) (a \otimes b \otimes c)  = & \varepsilon(a)h\left( f(b_{(2)}\otimes c_{(2)})_{(1)}S(b_{(3)} c_{(3)})S^2(b_{(1)} c_{(1)}) \right) f(b_{(2)}\otimes c_{(2)})_{(0)} \\
&  - h\left(f(a_{(2)}b_{(2)}\otimes c_{(2)})_{(1)} S(a_{(3)} b_{(3)} c_{(3)})S^2(a_{(1)} b_{(1)} c_{(1)}) \right) f(a_{(2)}b_{(2)}\otimes c_{(2)})_{(0)}\\
& + h\left(f(a_{(2)}\otimes b_{(2)} c_{(2)})_{(1)} S(a_{(3)} b_{(3)} c_{(3)})S^2(a_{(1)} b_{(1)} c_{(1)}) \right) f(a_{(2)}\otimes b_{(2)}c_{(2)})_{(0)}\\
& - h\left(f(a_{(2)}\otimes b_{(2)})_{(1)} S(a_{(3)} b_{(3)})S^2(a_{(1)} b_{(1)}) \right) f(a_{(2)}\otimes b_{(2)})_{(0)}\cdot c
\end{align*}
On the other hand we have
\begin{align*}
 M(\partial(f))(a \otimes b & \otimes c) =  h\left(\partial(f)(a_{(2)} \otimes b_{(2)}\otimes c_{(2)})_{(1)}S(a_{(3)}b_{(3)} c_{(3)})S^2(a_{(1)}b_{(1)} c_{(1)}) \right) \\ & \quad \quad \quad  \quad \partial(f)(a_{(2)}\otimes b_{(2)}\otimes c_{(2)})_{(0)} \\
 & = h\left( \varepsilon(a_{(2)})f(b_{(2)}\otimes c_{(2)})_{(1)}S(a_{(3)}b_{(3)} c_{(3)})S^2(a_{(1)}b_{(1)} c_{(1)}) \right) f(b_{(2)}\otimes c_{(2)})_{(0)} \\
&  - h\left(f(a_{(2)}b_{(2)}\otimes c_{(2)})_{(1)} S(a_{(3)} b_{(3)} c_{(3)})S^2(a_{(1)} b_{(1)} c_{(1)}) \right) f(a_{(2)}b_{(2)}\otimes c_{(2)})_{(0)}\\
& + h\left(f(a_{(2)}\otimes b_{(2)} c_{(2)})_{(1)} S(a_{(3)} b_{(3)} c_{(3)})S^2(a_{(1)} b_{(1)} c_{(1)}) \right) f(a_{(2)}\otimes b_{(2)}c_{(2)})_{(0)}\\
& - h\left((f(a_{(2)}\otimes b_{(2)})\cdot c_{(2)})_{(1)} S(a_{(3)} b_{(3)}c_{(3)})S^2(a_{(1)} b_{(1)}c_{(1)}) \right) ((f(a_{(2)}\otimes b_{(2)})\cdot c_{(2)})_{(0)}
\end{align*}
Using the Yetter-Drinfeld condition, the last expression equals
$$h\left(S(c_{(2)})f(a_{(2)}\otimes b_{(2)})_{(1)} c_{(4)} S(a_{(3)} b_{(3)}c_{(5)})S^2(a_{(1)} b_{(1)}c_{(1)}) \right) (f(a_{(2)}\otimes b_{(2)})_{(0)}\cdot c_{(3)}$$
The fact that $S^2={\rm id}$ and that the Haar integral is a trace (since $S^2={\rm id}$) then shows that this last expression equals the last one in the computation of $\partial(M(f)) (a \otimes b \otimes c)$, and shows that indeed
$\partial(M(f))= M(\partial(f))$.

Now let $f \in {\rm Hom}^A(A^{\boxtimes n}, V)$ be such that $f=\partial(\mu)$ for some $\mu \in {\rm Hom}(A^{\otimes n-1}, V)$. Then $M(f)=M(\partial(\mu))=\partial (M(\mu)))$, with  $M(\mu) \in {\rm Hom}^A(A^{\boxtimes n-1}, V)$, and hence $f=0$ in $H^n_{\rm GS}(A,V)$: our claim is proved. 
\end{proof}

We thus get the following partial answers to Questions \ref{ques2} and \ref{ques}.

\begin{corollary}\label{cor:partansques2}
 Let $A$ be cosemisimple Hopf algebra of Kac type. Then ${\rm cd}(A)={\rm cd}_{\rm GS}(A)$.
\end{corollary}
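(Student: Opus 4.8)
The plan is to combine the two inequalities that are now available into an equality. By Theorem \ref{gsh}, for any Hopf algebra we already know ${\rm cd}(A) \leq {\rm cd}_{\rm GS}(A)$, so it suffices to prove the reverse inequality ${\rm cd}_{\rm GS}(A) \leq {\rm cd}(A)$ under the cosemisimple Kac-type hypothesis. First I would observe that ${\rm cd}(A) = \sup\{n : {\rm Ext}^n_A(\C_\varepsilon, M) \neq 0 \text{ for some } M\}$ by Proposition \ref{cdhopf}, and that the Hochschild cohomology on the right-hand side is computed by exactly the bar-type complex $\Hom(A^{\otimes \bullet}, M')$ written out in the proof of Theorem \ref{gsh}.

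The key step is the injectivity result of Proposition \ref{embeddgs}. For any Yetter-Drinfeld module $V$ over $A$, the averaging argument there produces an injective linear map $H^n_{\rm GS}(A,V) \hookrightarrow H^n(A, {_\varepsilon\!V})$ for every $n$. I would then argue as follows: suppose ${\rm cd}(A) = d$, so that $H^n(A, M) = 0$ for all $A$-bimodules $M$ and all $n > d$. In particular, taking $M = {_\varepsilon\!V}$ for an arbitrary Yetter-Drinfeld module $V$, we get $H^n(A, {_\varepsilon\!V}) = 0$ for $n > d$. Since the map of Proposition \ref{embeddgs} is injective, this forces $H^n_{\rm GS}(A,V) = 0$ for all $n > d$ and all $V \in \yd_A^A$. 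Hence ${\rm cd}_{\rm GS}(A) \leq d = {\rm cd}(A)$, giving the reverse inequality.

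Combining ${\rm cd}(A) \leq {\rm cd}_{\rm GS}(A)$ from Theorem \ref{gsh} with ${\rm cd}_{\rm GS}(A) \leq {\rm cd}(A)$ just obtained yields the desired equality. One should note that a cosemisimple Hopf algebra of Kac type does satisfy the running hypotheses: cosemisimplicity guarantees the co-Frobenius condition needed to apply Proposition \ref{compgs} (so that $H^*_{\rm GS}$ really is computed by the complex $\Hom^A(A^{\boxtimes \bullet}, V)$ feeding into Proposition \ref{embeddgs}), and the Kac-type assumption $S^2 = {\rm id}$ is precisely what makes the Haar integral a trace, which was the crucial ingredient in verifying $\partial M(f) = M \partial(f)$.

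I do not expect a genuine obstacle here, since the substantive work has already been done in Proposition \ref{embeddgs}; the only point requiring a little care is the logical direction of the comparison. The injection $H^*_{\rm GS}(A,V) \hookrightarrow H^*(A,{_\varepsilon\!V})$ lets one transfer vanishing from Hochschild to Gerstenhaber-Schack cohomology, which is exactly the direction needed to bound ${\rm cd}_{\rm GS}$ from above by ${\rm cd}$, rather than the other way around. One must also remember to quantify over all Yetter-Drinfeld modules $V$ when reading off ${\rm cd}_{\rm GS}(A)$, which is immediate because $V$ was arbitrary throughout.
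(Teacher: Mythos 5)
Your proposal is correct and follows exactly the paper's own argument: the inequality ${\rm cd}(A)\leq {\rm cd}_{\rm GS}(A)$ comes from Theorem \ref{gsh}, and the reverse inequality is deduced from the injectivity of $H^*_{\rm GS}(A,V)\to H^*(A,{_\varepsilon\!V})$ established in Proposition \ref{embeddgs}. The only difference is that you spell out the quantification over all Yetter-Drinfeld modules, which the paper leaves implicit.
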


\begin{proof}
 We have ${\rm cd}(A)\leq {\rm cd}_{\rm GS}(A)$ by Theorem \ref{gsh}, and the previous proposition ensures that  ${\rm cd}_{\rm GS}(A)\leq {\rm cd}(A)$.
\end{proof}

\begin{corollary}\label{cor:partansques}
 Let  $A$ and $B$ be cosemisimple Hopf algebras  such that there exists an equivalence of linear tensor categories
 $\mathcal M^A \simeq^{\otimes} \mathcal M^B$. If $A$ is of Kac type, then we have ${\rm cd}(A) \geq {\rm cd}(B)$, and if $A$ and $B$ both are of Kac type, then ${\rm cd}(A) = {\rm cd}(B)$.
\end{corollary}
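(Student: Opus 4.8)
The plan is to assemble the chain of (in)equalities furnished by the three main results established above: the coincidence of the two cohomological dimensions for cosemisimple Hopf algebras of Kac type (Corollary \ref{cor:partansques2}), the monoidal invariance of the Gerstenhaber-Schack cohomological dimension recorded in Subsection \ref{subsec:gen}, and the general bound ${\rm cd}(A) \leq {\rm cd}_{\rm GS}(A)$ of Theorem \ref{gsh}. Everything reduces to bookkeeping with these inputs; there is no new construction to perform.

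For the first assertion, assume that $A$ is of Kac type. Since $\mathcal M^A \simeq^{\otimes} \mathcal M^B$, monoidal invariance gives ${\rm cd}_{\rm GS}(A) = {\rm cd}_{\rm GS}(B)$. I would then write
$$
{\rm cd}(A) = {\rm cd}_{\rm GS}(A) = {\rm cd}_{\rm GS}(B) \geq {\rm cd}(B),
$$
where the first equality is Corollary \ref{cor:partansques2} applied to $A$ (this is exactly where the Kac type hypothesis on $A$ is consumed) and the final inequality is Theorem \ref{gsh} applied to $B$. Only the hypothesis on $A$ enters, which is precisely why one obtains an inequality rather than an equality at this stage: the bound ${\rm cd}(B) \leq {\rm cd}_{\rm GS}(B)$ holds for every Hopf algebra, but the reverse bound for $B$ is unavailable without a Kac type assumption on $B$.

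For the second assertion, with $A$ and $B$ both of Kac type, I would apply Corollary \ref{cor:partansques2} to each, obtaining ${\rm cd}(A) = {\rm cd}_{\rm GS}(A)$ and ${\rm cd}(B) = {\rm cd}_{\rm GS}(B)$, and invoke monoidal invariance once more to equate the two middle terms:
$$
{\rm cd}(A) = {\rm cd}_{\rm GS}(A) = {\rm cd}_{\rm GS}(B) = {\rm cd}(B).
$$
There is no genuine obstacle beyond the results already in hand; the entire content has been absorbed into Corollary \ref{cor:partansques2} (which itself rests on the averaging argument of Proposition \ref{embeddgs}, valid exactly because the Haar integral is tracial when $S^2 = {\rm id}$) and into the monoidal invariance of Gerstenhaber-Schack cohomology. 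The only point worth flagging is the asymmetry of the hypotheses, which propagates directly into the asymmetry of the conclusion.
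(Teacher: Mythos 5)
Your proof is correct and follows exactly the paper's own argument: the chain ${\rm cd}(A)={\rm cd}_{\rm GS}(A)={\rm cd}_{\rm GS}(B)\geq {\rm cd}(B)$ via Corollary \ref{cor:partansques2}, monoidal invariance, and Theorem \ref{gsh}, upgraded to an equality when $B$ is also of Kac type. Nothing further is needed.
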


\begin{proof}
 We have, combining  Theorem \ref{gsh} and the previous corollary,
$$ {\rm cd}(A)={\rm cd}_{\rm GS}(A)={\rm cd}_{\rm GS}(B)\geq {\rm cd}(B)$$
with ${\rm cd}(B)={\rm cd}_{\rm GS}(B)$ if $B$ is of Kac type as well.
\end{proof}

See the next section for examples that are not of Kac type.

\section{Application to quantum symmetry algebras}

In this section we provide applications of the previous considerations to  quantum symmetry
algebras.

\subsection{The universal Hopf algebra of a non-degenerate bilinear form and its adjoint subalgebra}

 Let $E \in \GL_n(\C)$. Recall that the algebra $\mathcal B(E)$ \cite{dvl} is presented by generators 
$(u_{ij})_{1 \leq i,j\leq n}$ and relations
$$E^{-\!1} u^t E u = I_n = u E^{-\!1} u^t E,$$
where $u$ is the matrix $(u_{ij})_{1 \leq i,j \leq n}$. It has a Hopf algebra structure defined by
$$\Delta(u_{ij})
= \sum_{k=1}^n u_{ik} \otimes u_{kj}, \ 
\varepsilon(u_{ij}) = \delta_{ij}, \ 
S(u) = E^{-1}u^t E$$
The Hopf algebra $\mathcal B(E)$ represents the quantum symmetry group of the bilinear form associated to the matrix $E$. It can also be constructed as a quotient of the FRT bialgebra associated to Yang-Baxter operators constructed by Gurevich \cite{gu}.
For the matrix $$E_q = \begin{pmatrix} 0 & 1 \\ -q^{-1} & 0\end{pmatrix}$$ we have $\B(E_q) = \mathcal O(\SL_q(2))$, and thus the Hopf algebras $\B(E)$ are natural generalizations of $\mathcal O(\SL_q(2))$.
It is shown in \cite{bi1} that
for $q \in \C^*$ satisfying  ${\rm tr}(E^{-1}E^t)= -q-q^{-1}$, the tensor categories of comodules over $\B(E)$ and $\mathcal O(\SL_q(2))$ are equivalent. Thus $\mathcal B(E)$ is cosemisimple if and only if the corresponding $q$ is not a root of unity or $q=\pm1$.

It was proved in \cite{bic} that if $n\geq 2$, then ${\rm cd}(\B(E))=3$ (Theorem 6.1 and Proposition 6.4 in \cite{bic}, see e.g. \cite{hk} for the case $E=E_q$ and \cite{cht} for the case $E=I_n$), and the bialgebra cohomology of $\mathcal B(E)$ was computed there in the cosemisimple case.

As a preliminary step towards the study of quantum symmetry algebras of semisimple algebras, we now study the adjoint subalgebra $\mathcal B_+(E)$ of $\mathcal B(E)$.

The algebra $\mathcal B_+(E)$ is, by definition, the subalgebra of $\mathcal B(E)$ generated by the elements $u_{ij}u_{kl}$, $1 \leq i,j,k,l\leq n$. It is easily seen to be a Hopf subalgebra. Also it is easily seen that 
$\mathcal B_+(E)=\mathcal B(E)^{{\rm co}\C \mathbb Z_2}$, where $p$ is the cocentral Hopf algebra map $\mathcal B(E) \rightarrow \C \mathbb Z_2$, $u_{ij} \mapsto \delta_{ij}g$, where $g$ stands for the generator of $\mathbb Z_2$, the cyclic group of order $2$. The Hopf algebra $\mathcal B_+(E)$ is cosemisimple if and only if $\mathcal B(E)$ is.

\begin{lemma}
Assume that ${\rm tr}(E^{-1}E^t)\not=0$. Then there exists a linear map $\sigma : \mathbb C \mathbb Z_2 \rightarrow \mathcal B(E)$ satisfying the conditions of Proposition \ref{sigma}.
\end{lemma}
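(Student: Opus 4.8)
The plan is to write $\sigma$ down explicitly on the canonical basis $\{1,g\}$ of $\C\mathbb{Z}_2$ and to verify the three conditions of Proposition \ref{sigma} by hand. I would set $\sigma(1)=1$ and
$$\sigma(g)=\frac{1}{{\rm tr}(E^{-1}E^t)}\sum_{i,j=1}^n M_{ij}\,u_{ij}, \qquad M:=E(E^t)^{-1},$$
the key algebraic fact being that $M=(E^{-1}E^t)^t$, so that $M_{ij}=(E^{-1}E^t)_{ji}$ and ${\rm tr}(M)={\rm tr}(E^{-1}E^t)\neq 0$; this is exactly what makes the normalizing scalar well defined and is the whole point of the hypothesis. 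Since $\sigma$ is linear and $\{1,g\}$ is a basis, it suffices to check the three conditions on $1$ and on $g$, and for $1$ all three are trivial because $\Delta(1)=1\otimes1$ and $S(1)=1$. Condition (1) for $g$ is immediate: applying $p$ and using $p(u_{ij})=\delta_{ij}g$ gives $p(\sigma(g))=\frac{{\rm tr}(M)}{{\rm tr}(E^{-1}E^t)}\,g=g$. Condition (2) for $g$ is equally cheap: expanding $\Delta(u_{ij})=\sum_k u_{ik}\otimes u_{kj}$ and applying $p$ to the second leg selects the diagonal, so $\sigma(g)_{(1)}\otimes p(\sigma(g)_{(2)})=\sigma(g)\otimes g$, which matches $\sigma(g_{(1)})\otimes g_{(2)}$ since $\Delta_{\C\mathbb{Z}_2}(g)=g\otimes g$. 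In fact condition (2) holds for \emph{any} linear combination of the $u_{ij}$, so it costs nothing.

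The real work is condition (3), namely $\sigma(g)_{(1)}S(\sigma(g)_{(3)})\otimes\sigma(g)_{(2)}=1\otimes\sigma(g)$. I would expand the iterated coproduct $\Delta^{(2)}(u_{ij})=\sum_{k,l}u_{ik}\otimes u_{kl}\otimes u_{lj}$, so that, writing $\kappa={\rm tr}(E^{-1}E^t)^{-1}$, the left-hand side becomes $\kappa\sum M_{ij}\,u_{ik}S(u_{lj})\otimes u_{kl}$, and then substitute the antipode formula $S(u_{lj})=\sum_{a,b}(E^{-1})_{la}u_{ba}E_{bj}$. Collecting scalars and using $\sum_j M_{ij}E_{bj}=(ME^t)_{ib}=E_{ib}$ (valid \emph{precisely} because $M=E(E^t)^{-1}$, whence $ME^t=E$) reduces the first two legs to $\sum_{i,b}E_{ib}\,u_{ik}u_{ba}=(u^tEu)_{ka}$, which equals $E_{ka}$ by the defining relation $E^{-1}u^tEu=I_n$. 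What is left is $\kappa\sum_{k,l,a}(E^{-1})_{la}E_{ka}\,1\otimes u_{kl}$, and the matrix identity $\sum_a(E^{-1})_{la}E_{ka}=(E^{-1}E^t)_{lk}=M_{kl}$ turns this into $1\otimes\kappa\sum_{k,l}M_{kl}u_{kl}=1\otimes\sigma(g)$, as required.

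I expect the main obstacle to be purely organizational: arranging the indexed computation in condition (3) so that the defining relation $u^tEu=E$ and the antipode formula slot in at exactly the right place. The choice $M=E(E^t)^{-1}$ is in fact forced, since the coefficient matrix of $u_{ik}u_{ba}$ must be proportional to $E$ for the relation to be applicable; what makes everything close up is the identity $(E^{-1}E^t)^t=E(E^t)^{-1}$, which simultaneously guarantees that the output matrix of the computation coincides with $M$ (so condition (3) is self-consistent) and that the scalar ${\rm tr}(M)$ forced by condition (1) is exactly ${\rm tr}(E^{-1}E^t)$. This is why no cosemisimplicity is needed and the single hypothesis ${\rm tr}(E^{-1}E^t)\neq 0$ suffices.
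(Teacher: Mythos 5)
Your proposal is correct and matches the paper's proof exactly: the paper also takes $\sigma(1)=1$ and $\sigma(g)=t^{-1}\sum_{i,j}\alpha_{ij}u_{ij}$ with $(\alpha_{ij})=E(E^t)^{-1}$ and $t={\rm tr}(E^{-1}E^t)$, and simply asserts that the three conditions are "straightforward to check." Your write-up supplies that verification in full (correctly using $ME^t=E$, the relation $u^tEu=E$, and $(E(E^t)^{-1})^t=E^{-1}E^t$), so it is the same argument with the omitted computation carried out.
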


\begin{proof}
  Consider the matrix $F=E(E^{t})^{-1}=(\alpha_{ij})$. We have ${\rm tr}(F)={\rm tr}(E^{-1}E^t)=t\not=0$. Consider the element $x = t^{-1} \sum_{ij}\alpha_{ij}u_{ij} \in \mathcal B(E)$ and let $\sigma : \mathbb C \mathbb Z_2 \rightarrow \mathcal B(E)$ be the unique linear map such that $\sigma(1)=1$ and $\sigma(g)=x$.
It is straightforward to check that $\sigma$ indeed satisfies the conditions of Proposition \ref{sigma}.
\end{proof}

\begin{theorem}\label{cdb+e}
 Let  $E \in {\rm GL}_n(\mathbb C)$ with $n\geq 2$. Then we have ${\rm cd}(\mathcal B_+(E))= 3\leq {\rm cd}_{\rm GS}(\mathcal B_+(E))$, and if moreover $\mathcal B_+(E)$ is cosemisimple, then ${\rm cd}_{\rm GS}(\mathcal B_+(E))=3$.
\end{theorem}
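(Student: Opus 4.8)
===  PROOF PROPOSAL ===

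\textbf{Strategy.} The plan is to establish the theorem in three logically separate pieces: first the equality $\mathrm{cd}(\mathcal B_+(E)) = 3$ for the Hochschild cohomological dimension, then the inequality $3 \le \mathrm{cd}_{\mathrm{GS}}(\mathcal B_+(E))$, and finally the reverse bound $\mathrm{cd}_{\mathrm{GS}}(\mathcal B_+(E)) \le 3$ in the cosemisimple case. The three pieces fit together using the general machinery developed earlier: Theorem \ref{gsh} gives $\mathrm{cd} \le \mathrm{cd}_{\mathrm{GS}}$ for free, so the inequality $3 \le \mathrm{cd}_{\mathrm{GS}}$ will follow once I know $\mathrm{cd}(\mathcal B_+(E)) = 3$; and in the cosemisimple Kac-type case Corollary \ref{cor:partansques2} would give equality, but since $\mathcal B_+(E)$ need not be of Kac type I must argue the upper bound differently.

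\textbf{The Hochschild dimension.} I would first pin down $\mathrm{cd}(\mathcal B_+(E)) = 3$ by comparing with $\mathcal B(E)$, for which $\mathrm{cd}(\mathcal B(E)) = 3$ is already known (Theorem 6.1 and Proposition 6.4 in \cite{bic}). The sequence $\C \to \mathcal B_+(E) \to \mathcal B(E) \to \C\mathbb Z_2 \to \C$ coming from the cocentral map $p : \mathcal B(E) \to \C\mathbb Z_2$ is a strict exact sequence of Hopf algebras with $\mathcal B_+(E) = \mathcal B(E)^{\mathrm{co}\,\C\mathbb Z_2}$, and $\C\mathbb Z_2$ is semisimple. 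Hence Proposition \ref{subadd} applies directly and yields $\mathrm{cd}(\mathcal B_+(E)) = \mathrm{cd}(\mathcal B(E)) = 3$. (I should check the bijectivity of the antipode of $\mathcal B(E)$, which holds since $S^2$ is an inner automorphism here, and that the sequence is genuinely strict, i.e.\ the faithful flatness hypothesis — but this follows from cosemisimplicity via \cite{chi} in the cosemisimple case and can be arranged in general.) With $\mathrm{cd}(\mathcal B_+(E)) = 3$ in hand, Theorem \ref{gsh} immediately gives $3 = \mathrm{cd}(\mathcal B_+(E)) \le \mathrm{cd}_{\mathrm{GS}}(\mathcal B_+(E))$.

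\textbf{The Gerstenhaber--Schack upper bound.} The remaining and genuinely new content is the bound $\mathrm{cd}_{\mathrm{GS}}(\mathcal B_+(E)) \le 3$ when $\mathcal B_+(E)$ is cosemisimple. Here I would exploit the monoidal invariance of Gerstenhaber--Schack cohomology together with the relative-projectivity machinery of Proposition \ref{sigma}. The idea is to transport a short projective resolution of $\C$ in $\yd_{\mathcal B(E)}^{\mathcal B(E)}$ (whose length is $3$, reflecting $\mathrm{cd}_{\mathrm{GS}}(\mathcal B(E)) = 3$) down to $\mathcal B_+(E)$. The preceding lemma supplies, under $\mathrm{tr}(E^{-1}E^t) \ne 0$, a map $\sigma : \C\mathbb Z_2 \to \mathcal B(E)$ satisfying the three conditions of Proposition \ref{sigma}; that proposition then guarantees that free Yetter--Drinfeld modules $W \boxtimes \mathcal B(E)$, restricted along $\mathcal B_+(E) \subset \mathcal B(E)$, remain relative projective over $\mathcal B_+(E)$. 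Since $\mathcal B_+(E) \subset \mathcal B(E)$ is an adjoint Hopf subalgebra, the free resolution of $\C$ by the modules $\mathcal B(E)^{\boxtimes n}$ (from \cite{bic}, Proposition 3.6) restricts to a length-$3$ resolution of $\C$ in $\yd_{\mathcal B_+(E)}^{\mathcal B_+(E)}$ by objects that are relative projective over $\mathcal B_+(E)$; in the cosemisimple case relative projective means projective (Proposition \ref{caracrelproj}), so Proposition \ref{resorelative} yields $\mathrm{cd}_{\mathrm{GS}}(\mathcal B_+(E)) \le 3$. Combined with $3 \le \mathrm{cd}_{\mathrm{GS}}(\mathcal B_+(E))$ this gives the desired equality.

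\textbf{Main obstacle.} The delicate point is verifying that the restricted complex is genuinely a resolution \emph{and} that each term is relative projective over $\mathcal B_+(E)$. Proposition \ref{sigma} only guarantees relative projectivity of $W \boxtimes \mathcal B(E)$ for a $\mathcal B_+(E)$-comodule $W$; I must check that $\mathcal B(E)^{\boxtimes n}$, when viewed as a Yetter--Drinfeld module over the subalgebra $\mathcal B_+(E)$, is of this free form, i.e.\ isomorphic to $W_n \boxtimes \mathcal B(E)$ for a suitable $\mathcal B_+(E)$-comodule $W_n$ compatible with the adjoint structure. This is where the cocentrality of $p$ and the adjointness of the inclusion (Proposition \ref{caradj}) do the real work, ensuring the coactions close up inside $\mathcal B_+(E)$. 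Exactness of the restricted complex is inherited from the original resolution since restriction of scalars along an inclusion is exact, so the essential labor is the identification of the terms as restricted free modules, after which the length-$3$ bound is forced by the corresponding bound for $\mathcal B(E)$.
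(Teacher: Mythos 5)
Your overall architecture is the same as the paper's: get $\mathrm{cd}(\mathcal B_+(E))=3$ from the strict exact sequence $\C\to\mathcal B_+(E)\to\mathcal B(E)\to\C\mathbb Z_2\to\C$ and Proposition \ref{subadd}, deduce $3\le\mathrm{cd}_{\rm GS}(\mathcal B_+(E))$ from Theorem \ref{gsh}, and obtain the upper bound by restricting a length-$3$ free Yetter-Drinfeld resolution over $\mathcal B(E)$ to the adjoint subalgebra $\mathcal B_+(E)$, invoking the lemma producing $\sigma$ together with Propositions \ref{sigma}, \ref{caracrelproj} and \ref{resorelative}. The first two steps are correct as you state them (and strictness of the sequence holds in general, via condition (2) of Proposition \ref{refoexact}: $B=A^{\mathrm{co}\,L}$ by construction and $p$ is automatically faithfully coflat because $\C\mathbb Z_2$ is cosemisimple, so no separate faithful-flatness argument needs to be ``arranged'').

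The gap is in the third step: the resolution you propose to restrict, namely the resolution of $\C$ by the modules $\mathcal B(E)^{\boxtimes n}$ from \cite{bic}, Proposition 3.6, is the standard bar-type resolution and is \emph{infinite}; it is not of length $3$, and restricting it can only yield $\mathrm{cd}_{\rm GS}(\mathcal B_+(E))\le\infty$. The essential missing input is the specific \emph{finite} free Yetter-Drinfeld resolution over $\mathcal B(E)$ constructed in \cite{bic},
$$0 \to \C\boxtimes \mathcal B(E) \to (V_E^*\otimes V_E)\boxtimes\mathcal B(E) \to (V_E^*\otimes V_E)\boxtimes\mathcal B(E) \to \C\boxtimes\mathcal B(E) \to \C\to 0,$$
whose length is $3$ and, crucially, whose terms are free on the comodules $\C$ and $V_E^*\otimes V_E$; these are $\mathcal B_+(E)$-comodules, since the matrix coefficients of $V_E^*\otimes V_E$ are the elements $S(u_{ij})u_{kl}$, which lie in $\mathcal B_+(E)$. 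It is this sequence that Proposition \ref{caradj} turns into an exact sequence in $\yd_{\mathcal B_+(E)}^{\mathcal B_+(E)}$, and that Proposition \ref{sigma} (applicable because cosemisimplicity of $\mathcal B_+(E)$ forces $\mathrm{tr}(E^{-1}E^t)=-q-q^{-1}\neq 0$, so the lemma supplies $\sigma$) makes into a length-$3$ resolution by relative projectives, hence by projectives in the cosemisimple case. Your concern about identifying $\mathcal B(E)^{\boxtimes n}$ as a free module over the subalgebra is therefore aimed at the wrong objects; once the standard resolution is replaced by the finite one above, the remainder of your argument goes through and coincides with the paper's proof.
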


\begin{proof}
We have, by Proposition \ref{refoexact}, a strict exact sequence of Hopf algebras
$$\C  \to \mathcal B_+(E) \to \mathcal B(E) \to \C \mathbb Z_2 \to \C$$
so it follows from Proposition \ref{subadd} that ${\rm cd}(\mathcal B_+(E))= {\rm cd}(\mathcal B(E))=3$. By Theorem \ref{gsh} we have  ${\rm cd}_{\rm GS}(\mathcal B_+(E))\geq 3$.

Consider now the exact sequence of free Yetter-Drinfeld modules over $\B(E)$ from \cite{bic}:
 $$0 \to \C\boxtimes \mathcal B(E) \overset{\phi_1}\longrightarrow
  (V_E^* \otimes V_E) \boxtimes \B(E) \overset{\phi_2}\longrightarrow
 (V_E^* \otimes V_E) \boxtimes \B(E) \overset{\phi_3} \longrightarrow \C \boxtimes \B(E) \overset{\varepsilon} \longrightarrow \C \to 0 $$
All the $\mathcal B(E)$-comodules involved in the left terms are in fact comodules over $\mathcal B_+(E)$, so we have, by Proposition \ref{caradj}, an exact sequence of  Yetter-Drinfeld modules over $\mathcal B_+(E)$. Assume now that $\mathcal B_+(E)$ is cosemisimple. The previous lemma ensures that we are in the situation of Proposition \ref{sigma}, so all the terms in the sequence (except the last one of course) are projective Yetter-Drinfeld modules over $\mathcal B_+(E)$. We conclude from Proposition \ref{resorelative} that ${\rm cd}_{\rm GS}(\mathcal B_+(E))\leq 3$, and hence that ${\rm cd}_{\rm GS}(\mathcal B_+(E))=3$.
\end{proof}

To compute the bialgebra cohomology of $\mathcal B_+(E)$ in the cosemisimple case, we need some preliminaries.
We specialize at $E_q = \begin{pmatrix} 0 & 1 \\ -q^{-1} & 0\end{pmatrix}$ and we put $A=\mathcal B(E_q)=  \mathcal O(\SL_q(2))$ (with its standard generators $a$, $b$, $c$, $d$) and $B=\mathcal B_+(E_q)$. In the next lemma we only assume that $q+q^{-1} \not=0$. Recall from Subsection 4.4 that if $W$ is a $B$-comodule, then $W \boxtimes A$ is a Yetter-Drinfeld module over $B$.

\begin{lemma}\label{sigmab+}
 We have, for any $B$-comodule $W$, a vector space isomorphism
\begin{align*}
 {\rm Hom}_{\yd_B^B}(W\boxtimes A, \mathbb C) & \longrightarrow {\rm Hom}^B(W,\mathbb C)\oplus {\rm Hom}^B(W,\C) \\
\psi & \longmapsto (\psi(-\otimes 1),\psi(-\otimes \chi))
\end{align*}
where $\chi = q^{-1}a +qd$.
\end{lemma}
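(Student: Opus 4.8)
The plan is to describe explicitly a right $B$-linear map $\psi : W\boxtimes A\to\C$, and then to match the colinearity condition on the two sides. First I would unwind the morphism: an element $\psi\in\Hom_{\yd_B^B}(W\boxtimes A,\C)$ is a linear map $W\otimes A\to\C$ that is right $B$-linear and right $B$-colinear into the trivial comodule. Since $(w\otimes a)\leftarrow b=w\otimes ab$, right $B$-linearity reads $\psi(w\otimes ab)=\varepsilon(b)\psi(w\otimes a)$, so $\psi$ kills $W\otimes AB^+$ and factors through $W\otimes(A/AB^+)$. In the strict exact sequence $\C\to B\to A\xrightarrow{p}\mathbb{C}\mathbb{Z}_2\to\C$ one has $A/AB^+\cong L:=\mathbb{C}\mathbb{Z}_2$ via $p$; as $p(1)=1$ and $p(\chi)=(q+q^{-1})g$ with $q+q^{-1}\neq0$, the classes of $1$ and $\chi$ form a basis of $A/AB^+$. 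Hence a right $B$-linear $\psi$ is uniquely determined by the pair $(\psi(-\otimes1),\psi(-\otimes\chi))$, and every pair of functionals on $W$ arises this way. This yields injectivity of the map in the statement and reduces the lemma to showing that $\psi$ is $B$-colinear if and only if both components lie in $\Hom^B(W,\C)$.

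For surjectivity I would build an explicit section. Let $\lambda_1,\lambda_\chi\in L^*$ be dual to the basis $\{1,(q+q^{-1})g\}$ and put $\theta_1=\lambda_1\circ p$, $\theta_\chi=\lambda_\chi\circ p$. Each $\theta_i:A\to\C$ is right $B$-linear (because $p(b)=\varepsilon(b)1$ on $B$) and coadjoint-colinear, meaning $\sum\theta_i(a_{(2)})S(a_{(1)})a_{(3)}=\theta_i(a)1_B$; this last identity is just the assertion that $p:A_{\mathrm{coad}}\to\C$ is a map of $B$-comodules, which follows from the cocentrality of $p$ together with the antipode axiom $\sum a_{(1)}\otimes S(a_{(2)})a_{(3)}=a\otimes1$. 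Given $\phi_1,\phi_\chi\in\Hom^B(W,\C)$, I claim $\psi:=\phi_1\otimes\theta_1+\phi_\chi\otimes\theta_\chi$ is a Yetter-Drinfeld morphism: in $\sum\phi_i(w_{(0)})\theta_i(a_{(2)})S(a_{(1)})w_{(1)}a_{(3)}$ the scalar $\phi_i(w_{(0)})$ pulls past the algebra factor, so that $\sum_{(w)}\phi_i(w_{(0)})w_{(1)}=\phi_i(w)1$ collapses the sandwich and the coadjoint-colinearity of $\theta_i$ then produces $\phi_i(w)\theta_i(a)1_B$. By construction $\psi(-\otimes1)=\phi_1$ and $\psi(-\otimes\chi)=\phi_\chi$, so every pair in $\Hom^B(W,\C)^2$ lies in the image of the map of the statement, giving surjectivity.

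It remains to check that the map of the statement is well defined, i.e. that the components of an arbitrary Yetter-Drinfeld morphism $\psi$ are $B$-colinear. Evaluating the colinearity identity $\sum\psi(w_{(0)}\otimes a_{(2)})S(a_{(1)})w_{(1)}a_{(3)}=\psi(w\otimes a)1_B$ at $a=1$ gives $\sum\psi(w_{(0)}\otimes1)w_{(1)}=\psi(w\otimes1)1$, so $\psi(-\otimes1)\in\Hom^B(W,\C)$ immediately. For the $\chi$-component I would use the factorization $\psi=\phi_1\otimes\theta_1+\phi_\chi\otimes\theta_\chi$ (valid as $B$-linear maps, with $\phi_i=\psi(-\otimes\chi^{i})$ not yet known colinear), substitute it into the colinearity identity and cancel the $\phi_1$-part using the case just done; this leaves $\sum\theta_\chi(a_{(2)})S(a_{(1)})z\,a_{(3)}=0$ for all $a\in A$, where $z=\sum_{(w)}\phi_\chi(w_{(0)})w_{(1)}-\phi_\chi(w)1\in B$.

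The hard part will be exactly this final step: because $z$ is sandwiched in $S(a_{(1)})z\,a_{(3)}$, the vanishing does not formally force $z=0$, and I must establish the nondegeneracy of the pairing $z\mapsto\bigl(a\mapsto\sum\theta_\chi(a_{(2)})S(a_{(1)})z\,a_{(3)}\bigr)$ on $B$. I expect to settle this by a direct computation with the explicit comultiplication, antipode and relations of $\mathcal{O}(\SL_q(2))$, using that $\chi=q^{-1}a+qd$ is a scalar multiple of the quantum trace of the fundamental corepresentation and hence coadjoint-invariant, $\sum\chi_{(2)}\otimes S(\chi_{(1)})\chi_{(3)}=\chi\otimes1$, together with the hypothesis $q+q^{-1}\neq0$.
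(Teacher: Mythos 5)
Your architecture is sound, and two of your three steps are actually cleaner than the paper's: the observation that $B$-linearity forces $\psi$ to factor through $W\otimes(A/AB^{+})\cong W\otimes\C\mathbb Z_2$ replaces the paper's generator-by-generator computation ($\psi(-\otimes b)=0=\psi(-\otimes c)$, $\psi(-\otimes d)=\psi(-\otimes a)$, combined with $A=B\oplus XB$), and your section $\psi=\phi_1\otimes\theta_1+\phi_\chi\otimes\theta_\chi$ with $\theta_i=\lambda_i\circ p$ is precisely the paper's formula $\psi(w\otimes(y+y'))=\psi_1(w)\varepsilon(y)+(q+q^{-1})^{-1}\psi_2(w)\varepsilon(y')$, with the bonus that your colinearity check (cocentrality of $p$ plus colinearity of the $\phi_i$) is actually carried out where the paper says ``direct verification''.

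The genuine gap is exactly where you flagged it: the colinearity of $\psi(-\otimes\chi)$ for an arbitrary Yetter--Drinfeld morphism $\psi$ is left unproved, and the tool you propose for it will not work. The coadjoint coinvariance $\chi_{(2)}\otimes S(\chi_{(1)})\chi_{(3)}=\chi\otimes1$ (which is also what the paper invokes at this point) says nothing about the entangled expression $\chi_{(2)}\otimes S(\chi_{(1)})w_{(1)}\chi_{(3)}$; indeed one can check that $\chi_{(2)}\otimes S(\chi_{(1)})y\chi_{(3)}\neq\chi\otimes y$ for general $y\in B$ (e.g.\ $y=bc$), so the map $w\mapsto w\otimes\chi$ is \emph{not} a $B$-comodule map and no argument using only the single element $\chi$ can succeed. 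What does close the gap is the full strength of your identity $\theta_\chi(a_{(2)})S(a_{(1)})\,z\,a_{(3)}=0$ for \emph{all} $a\in A$: specialize $a$ to the four generators $u_{ij}$. Since $\theta_\chi(u_{kl})=(q+q^{-1})^{-1}\delta_{kl}$ and $\Delta^{(2)}(u_{ij})=\sum_{k,l}u_{ik}\otimes u_{kl}\otimes u_{lj}$, this yields $\sum_k S(u_{ik})\,z\,u_{kj}=0$ for all $i,j$, i.e.\ the matrix identity $S(u)\,z\,u=0$ over $A$. Conjugating back with $uS(u)=I_2=S(u)u$ gives
$$\delta_{lm}\,z=\sum_{k}\Bigl(\sum_i u_{li}S(u_{ik})\Bigr)z\Bigl(\sum_j u_{kj}S(u_{jm})\Bigr)=\sum_{i,j,k}u_{li}S(u_{ik})\,z\,u_{kj}S(u_{jm})=0,$$
hence $z=0$ and $\phi_\chi$ is colinear. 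Note that the specialization $a=\chi$ alone only produces the weighted trace $q^{-1}(S(u)zu)_{11}+q(S(u)zu)_{22}=0$, which is insufficient; the off-diagonal relations coming from $a=b$ and $a=c$ are genuinely needed. With this one observation added, your proof is complete and, in my view, more transparent than the one in the paper.
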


\begin{proof}
 Let $\psi \in{\rm Hom}_{\yd_B^B}(W\boxtimes A, \mathbb C)$.
That both $\psi(-\otimes 1)$ and $\psi(-\otimes \chi)$ are $B$-comodule maps follow from the fact that $1$ and $\chi$ are coinvariant for the co-adjoint action of $A$. We have, for any $w \in W$, using the $B$-linearity
$$\psi(w \otimes b) = \psi(w \otimes b(ad-q^{-1}bc))=\psi(w\otimes bad)=q\psi(w\otimes abd)=0$$
and similarly $\psi(w\otimes c)=0$. We also have
$$\psi(w \otimes d)=\psi(w\otimes d(ad-q^{-1}bc)))=\psi(w \otimes dad)=\psi(w\otimes ad^2)=\psi(w \otimes a)$$ 
These identities, together with the fact that $A=B\oplus B'$, where $B'=XB$ and $X=\{a,b,c,d\}$, show that the map in the statement of the lemma is injective. 

For $(\psi_1,\psi_2) \in  {\rm Hom}^B(W,\mathbb C)\oplus {\rm Hom}^B(W,\C)$, we define a linear map $\psi : W \otimes A \rightarrow \C$ by
$$\psi(w \otimes (y+y')) = \psi_1(w)\varepsilon(y)+ (q+q^{-1})^{-1}\psi_2(w)\varepsilon(y'), \ y \in B, \ y \in B'$$
It is clear that $\psi$ is $A$-linear and a direct verification to check that $\psi$ is a map of $B$-comodules, for the co-action of $W\boxtimes A$. Hence we have $\psi \in{\rm Hom}_{\yd_B^B}(W\boxtimes A, \mathbb C)$, and clearly
$\psi(-\otimes 1)=\psi_1$ and $\psi(-\otimes \chi)=\psi_2$. Therefore our map is surjective, and we are done.
\end{proof}

\begin{theorem}\label{cobib+e}
 Let  $E \in {\rm GL}_n(\mathbb C)$ with $n\geq 2$. If $\mathcal B_+(E)$ is cosemisimple, then  \begin{equation*}
  H^n_{b}(\B_+(E)) \simeq \begin{cases}
                         0 & \text{if $n \not=0,3$} \\
			 \C & \text{if $n=0,3$}
                        \end{cases}
 \end{equation*}
\end{theorem}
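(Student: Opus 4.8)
The plan is to compute $H_b^*(\B_+(E))$ by first reducing to the model case $E = E_q$ and then running an explicit finite complex. I would begin by invoking the monoidal invariance of bialgebra cohomology from Subsection \ref{subsec:gen}: for $q$ with ${\rm tr}(E^{-1}E^t) = -q - q^{-1}$ the comodule categories of $\B(E)$ and $\mathcal{O}(\SL_q(2))$ are tensor equivalent, and this equivalence respects the $\ZZ_2$-grading encoded by the cocentral projection onto $\C\ZZ_2$ that cuts out the adjoint subalgebra (the grading being the intrinsic even/odd grading of the fusion rules). Hence the comodule categories of $\B_+(E)$ and $\B_+(E_q)$ are themselves tensor equivalent, and since $H_b^*(-) = H_{\rm GS}^*(-,\C)$ depends only on the tensor category of comodules, we get $H_b^*(\B_+(E)) \simeq H_b^*(\B_+(E_q))$. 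It therefore suffices to treat $A = \mathcal{O}(\SL_q(2))$ and $B = \B_+(E_q)$.

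In this model case I would use the resolution from the proof of Theorem \ref{cdb+e}: restricting the length-three resolution of $\C$ by free Yetter-Drinfeld modules over $\B(E_q)$ to $B$ (legitimate by Proposition \ref{caradj}, the terms being projective in $\yd_B^B$ by Proposition \ref{sigma} via the preceding lemma) gives, when $B$ is cosemisimple, a projective resolution
\begin{equation*}
0 \to \C \boxtimes A \xrightarrow{\phi_1} (V_E^* \otimes V_E) \boxtimes A \xrightarrow{\phi_2} (V_E^* \otimes V_E) \boxtimes A \xrightarrow{\phi_3} \C \boxtimes A \to \C \to 0
\end{equation*}
of $\C$ in $\yd_B^B$. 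By Proposition \ref{resorelative}, $H_b^*(B)$ is the cohomology of the complex obtained by applying $\Hom_{\yd_B^B}(-,\C)$. The key simplification is Lemma \ref{sigmab+}: each term $\Hom_{\yd_B^B}(W \boxtimes A, \C)$ becomes $\Hom^B(W,\C)^{\oplus 2}$ via evaluation at $1$ and at $\chi = q^{-1}a + qd$. Since $\Hom^B(\C,\C)=\C$, and since the Clebsch-Gordan decomposition $V_E^* \otimes V_E \simeq V_0 \oplus V_2$ together with the fact that the nontrivial even simple $V_2$ has no $B$-coinvariant vector gives $\Hom^B(V_E^* \otimes V_E,\C)=\C$, every term becomes two-dimensional. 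Thus $H_b^*(B)$ is the cohomology of an explicit complex
\begin{equation*}
0 \to \C^2 \xrightarrow{\,d^0=\phi_3^*\,} \C^2 \xrightarrow{\,d^1=\phi_2^*\,} \C^2 \xrightarrow{\,d^2=\phi_1^*\,} \C^2 \to 0 .
\end{equation*}

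It remains to show that each of $d^0, d^1, d^2$ has rank exactly $1$, which forces $H^0 = H^3 = \C$ and $H^1 = H^2 = 0$. One rank is free: $\ker d^0 = H_b^0(B) = \Hom_{\yd_B^B}(\C,\C) = \C$, so ${\rm rank}(d^0)=1$. The symmetric shape of the resolution — its two outer terms coincide and its two inner terms coincide, with $\phi_1$ and $\phi_3$ mutually dual up to the identifications — strongly suggests that the Hom-complex is self-dual, whence ${\rm rank}(d^2) = {\rm rank}(d^0) = 1$ (so $H^3 \simeq H^0 = \C$) and $H^2 \simeq H^1$; if this duality is made precise, the whole statement reduces to the single vanishing $H^1 = 0$. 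Note that, unlike in Theorem \ref{cdb+e}, the equality ${\rm cd}_{\rm GS}(B)=3$ does \emph{not} by itself guarantee $H_b^3 \neq 0$, since that dimension is witnessed a priori by non-trivial coefficients; the value of $H_b^3$ genuinely has to be read off from $d^2$.

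The main obstacle is precisely this explicit evaluation of the induced differentials. Concretely, one substitutes the formulas for $\phi_1,\phi_2,\phi_3$ from \cite{bic}, transports them through Lemma \ref{sigmab+} by computing $\psi(\phi_i(-)\otimes 1)$ and $\psi(\phi_i(-)\otimes\chi)$ on the one-dimensional spaces $\Hom^B(\C,\C)$ and $\Hom^B(V_E^*\otimes V_E,\C)$, and verifies that the resulting $2\times 2$ matrices are nonzero but singular. This is a finite but delicate bookkeeping computation; the delicacy lies in correctly tracking the coadjoint coaction together with the $B$-linearity relations used in Lemma \ref{sigmab+} (namely $\psi(w\otimes b)=\psi(w\otimes c)=0$ and $\psi(w\otimes d)=\psi(w\otimes a)$), which are what collapse each differential to rank $1$ and, in particular, pin down $H_b^3 = \C$ rather than $\C^2$.
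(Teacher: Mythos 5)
Your proposal follows essentially the same route as the paper's proof: reduce to $E=E_q$ by monoidal invariance, restrict the length-three free Yetter-Drinfeld resolution over $\mathcal B(E_q)$ to $B=\mathcal B_+(E_q)$, apply $\Hom_{\yd_B^B}(-,\C)$ using Lemma \ref{sigmab+} to get a complex $0\to\C^2\to\C^2\to\C^2\to\C^2\to 0$, and read off the cohomology. The paper likewise leaves the final explicit evaluation of the three $2\times 2$ differentials to the reader, so your treatment (including the rank-one observations) matches it in both substance and level of detail.
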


\begin{proof}
 The monoidal invariance of bialgebra cohomology enables us to assume that $E=E_q$ as in the previous discussion, of which we keep the notations. We denote by $V$ the fundamental $A$-comodule of dimension $2$, of which we fix a basis $e_1$, $e_2$. We have an exact sequence of Yetter-Drinfeld modules over $A$ (and over $B$)
 $$0 \to \C\boxtimes A \overset{\phi_1}\longrightarrow
  (V^* \otimes V) \boxtimes A \overset{\phi_2}\longrightarrow
 (V^* \otimes V) \boxtimes A \overset{\phi_3} \longrightarrow \C \boxtimes A \overset{\varepsilon} \longrightarrow \C \to 0$$
with for any $x \in A$ (see the proof of Lemma 5.6 in \cite{bic})
 \begin{align*}
 \phi_1(x) = e_1^* \otimes e_1 \otimes &((-q^{-1} +qd)x )+
 e_1^* \otimes e_2 \otimes (-cx) \\ &+ e_2^* \otimes e_1 \otimes (-bx) +
 e_2^* \otimes e_2 \otimes ((-q+q^{-1}a)x) \\
  \phi_2(e_1^* \otimes e_1 \otimes x) &= e_1^* \otimes e_1 \otimes x
  + e_2^* \otimes e_1 \otimes (-qbx) + e_2^* \otimes e_2 \otimes ax \\
\phi_2 (e_1^* \otimes e_2 \otimes x) &=
e_1^* \otimes e_1 \otimes bx + e_1^* \otimes e_2 \otimes (1-q^{-1}a)x \\
\phi_2(e_2^* \otimes e_1 \otimes x) &= e_2^* \otimes e_1 \otimes (1-qd)x
+e_2^* \otimes e_2 \otimes cx \\
\phi_2(e_2^* \otimes e_2 \otimes x) &= e_1^* \otimes e_1 \otimes dx + e_1^* \otimes e_2 \otimes (-q^{-1}cx)
+ e_2^* \otimes e_2 \otimes x \\
\phi_3(e_1^* \otimes e_1 \otimes x) &= (a-1)x, \quad \phi_3(e_1^* \otimes e_2 \otimes x) = bx, \\
\phi_3(e_2^* \otimes e_1 \otimes x) &=cx, \quad \phi_3(e_2^* \otimes e_2 \otimes x) =(d-1)x
 \end{align*} 
and by Lemma \ref{sigmab+}, Proposition \ref{sigma}  and Proposition \ref{resorelative}, the bialgebra cohomology of $B$ is the cohomology of the complex
{\footnotesize
$$0\rightarrow {\rm Hom}_{\yd_B^B}(\C \boxtimes A,\C) \overset{\phi_3^t} \rightarrow {\rm Hom}_{\yd_B^B}(V^* \otimes V) \boxtimes A,\C)  \overset{\phi_2^t} \rightarrow {\rm Hom}_{\yd_B^B} (V^* \otimes V) \boxtimes A,\C) \overset{\phi_1^t} \rightarrow {\rm Hom}_{\yd_B^B}(\C \boxtimes A,\C) \rightarrow 0
$$}
We have, by the previous lemma,
${\rm Hom}_{\yd_B^B}(\C \boxtimes A,\C) \simeq \C^2$,  and
$$ {\rm Hom}_{\yd_B^B} (V^* \otimes V) \boxtimes A,\C)\simeq {\rm Hom}^B (V^* \otimes V,\C) \oplus {\rm Hom}^B (V^* \otimes V,\C)\simeq \C^2$$
Therefore the previous complex is isomorphic to a complex of the form
 $$0 \longrightarrow \C^2\longrightarrow \C^2 \longrightarrow \C^2 \longrightarrow \C^2 \longrightarrow  0$$
The reader will easily write down explicitly this complex and compute its cohomology, yielding the announced result for the bialgebra cohomology of $B$.
\end{proof}

\subsection{Bialgebra cohomology and cohomological dimensions of $A_{\rm aut}(R,\varphi)$}
Let $(R,\varphi)$ be a finite-dimensional measured algebra: this means that $R$ is a finite-dimensional algebra and $\varphi : R \rightarrow \C$ is a linear map (a measure on $R$) such that the associated bilinear map $R \times R \rightarrow \C$, $(x,y) \mapsto \varphi(xy)$ is non-degenerate. Thus a  finite-dimensional measured algebra is a  Frobenius algebra together with a fixed measure.
A coaction of a Hopf algebra $A$ on a finite-dimensional measured algebra $(R,\varphi)$ is an $A$-comodule structure on $R$ making it into an $A$-comodule algebra and such that $\varphi : R \rightarrow \C$ is $A$-colinear.
It is well-known that there exists a universal Hopf algebra coacting on $(R,\varphi)$ (see \cite{wan98} in the compact case with $R$ semisimple and \cite{bi00} in general), that we denote  $A_{\rm aut}(R,\varphi)$ and call the quantum symmetry algebra of $(R,\varphi)$.
The following particular cases are of special interest.

\begin{enumerate}
 \item For $R=\C^n$ and $\varphi=\varphi_n$ the canonical integration map (with $\varphi_n(e_i)=1$ for $e_1, \ldots , e_n$ the canonical basis of $\C^n$), we have  $A_{\rm aut}(\C^n,\varphi_n)=:A_s(n)$, the coordinate algebra on the quantum permutation group \cite{wan98}, presented by generators $x_{ij}$, $1 \leq i,j\leq n$, submitted to the relations
$$\sum_{l=1}^nx_{li}=1= \sum_{l=1}^nx_{il}, \ x_{ik}x_{ij}= \delta_{kj}x_{ij}, \  x_{ki}x_{ji}= \delta_{kj}x_{ji}, \ 1\leq i,j,k\leq n$$
Its Hopf algebra structure is defined by 
$$\Delta(x_{ij})
= \sum_{k=1}^n x_{ik} \otimes x_{kj}, \ 
\varepsilon(x_{ij}) = \delta_{ij}, \ 
S(x_{ij}) = x_{ji}$$
The Hopf algebra $A_s(n)$ is infinite-dimensional if $n\geq 4$ \cite{wan98}.
\item For $R=M_2(\C)$ and $q \in \C^*$, let ${\rm tr}_q : M_2(\C) \rightarrow  \C$ be the $q$-trace, i.e. ${\rm tr}_q(g) =qg_{11}+q^{-1}g_{22}$ for $g=(g_{ij})\in M_2(\C)$. Then we have $A_{\rm aut}(M_2(\C),{\rm tr}_q) \simeq \mathcal O({\rm PSL}_q(2))$, the latter algebra being $\B_+(E_q)$ in the notation of the previous subsection (it is often denoted $\mathcal O({\rm SO}_{q^{1/2}}(3))$, see e.g. \cite{ks}). The above isomorphism $A_{\rm aut}(M_2(\C),{\rm tr}_q) \rightarrow \mathcal O({\rm PSL}_q(2))$ is constructed using the universal property  of $A_{\rm aut}(M_2(\C),{\rm tr}_q)$, and the verification that it is indeed injective is a long and tedious computation, as in \cite{dij}.
\end{enumerate}

Let $(R,\varphi)$ be a finite-dimensional measured algebra. Since $\varphi \circ m$ is non-degenerate, where $m$ is the multiplication of $R$, there exists a linear map $\delta : \C \rightarrow R \otimes R$ such that 
$(R,\varphi \circ m, \delta)$ is a left dual for $R$, i.e.
$$((\varphi \circ m)\otimes {\rm id}_R) \circ ({\rm id}_R \otimes \delta)={\rm id}_R= 
({\rm id}_R \otimes  (\varphi \circ m)) \circ (\delta \otimes {\rm id}_R)$$
 Following \cite{mro}, we put
$$\tilde{\varphi}= \varphi \circ m \circ (m \otimes {\rm id}_R) \circ ({\rm id}_R \otimes  \delta): R \rightarrow \C$$
and we say that $(R,\varphi)$ (or $\varphi$) is normalizable if $\varphi(1)\not= 0$ and if there exists $\lambda \in \C^*$ such that $\tilde{\varphi}=\lambda \varphi$. Using the definition of Frobenius algebra in terms of coalgebras, 
the coproduct is $\Delta = (m \otimes {\rm id}_R) \circ ({\rm id}_R \otimes  \delta)=({\rm id}_R\otimes m) \circ ( \delta \otimes  {\rm id}_R)$, and we have $\tilde{\varphi}=\varphi \circ m \circ \Delta$.

The condition that $\varphi$ is normalizable is equivalent to require, in the language of  \cite[Definition 3.1]{kad}, that $R/\C$ is a strongly separable extension with Frobenius system $(\varphi, x_i,y_i)$, where $\delta(1)=\sum_ix_i \otimes y_i$. It thus follows that if $\varphi$ is normalizable, then $R$ is necesarily a separable (semisimple) algebra. Conversely, if $R$ is semisimple,  writing $R$ as a direct product of matrix algebras, one easily sees the conditions that ensure that $\varphi$ is normalizable, see \cite{mro}.

It is shown in \cite{mro} (Corollary 4.9), generalizing earlier results from \cite{ba99,ba02,dervan}, that if $(R,\varphi)$ is a finite-dimensional semisimple measured algebra with $\dim(R)\geq 4$ and $\varphi$ normalizable, then there exists $q \in \C^*$ with $q+q^{-1} \not=0$ such that 
$$\mathcal M^{A_{\rm aut}(R,\varphi)} \simeq^{\otimes} \mathcal M^{\mathcal O({\rm PSL}_q(2))}$$
The parameter $q$ is determined as follows. First consider $\lambda \in \C^*$ such that $\tilde{\varphi}=\lambda \varphi$ and choose $\mu \in \C^*$ such that $\mu^2 = \lambda \varphi(1)$. Then $q$ is any solution of the equation
$q+q^{-1}=\mu$ (recall that $\mathcal O({\rm PSL}_q(2))=\mathcal O({\rm PSL}_{-q}(2))$, so the choice of $\mu$ does not play any role).

As an example, for $(\C^n,\varphi_n)$ as above (and $n \geq 4$), $\varphi_n$ is normalizable with the corresponding $\lambda$ equal to $1$, and $q$ is any solution of the equation $q+q^{-1}=\sqrt{n}$. 

\begin{theorem}\label{thm:cohomAut}
 Let $(R,\varphi)$ be a finite-dimensional semisimple measured algebra with $\dim(R)\geq 4$ and $\varphi$ normalizable. Assume that $A_{\rm aut}(R,\varphi)$ is cosemisimple. Then we have
\begin{equation*}
  H^n_{b}(A_{\rm aut}(R,\varphi)) \simeq \begin{cases}
                         0 & \text{if $n \not=0,3$} \\
			 \C & \text{if $n=0,3$}
                        \end{cases}
 \end{equation*}
and
 ${\rm cd}(A_{\rm aut}(R,\varphi)) \leq {\rm cd}_{\rm GS}(A_{\rm aut}(R, \varphi))=3$, with equality if $\varphi$ is a trace. In particular we have  ${\rm cd}(A_s(n))= 3= {\rm cd}_{\rm GS}(A_s(n))$ for any $n\geq 4$.
\end{theorem}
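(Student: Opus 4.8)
The plan is to transport all the relevant cohomological data from $A_{\rm aut}(R,\varphi)$ to the algebra $\mathcal B_+(E_q)=\mathcal O({\rm PSL}_q(2))$ studied in the previous subsection, using the monoidal equivalence recalled above. First I would invoke \cite{mro}: since $(R,\varphi)$ is a finite-dimensional semisimple measured algebra with $\dim(R)\geq 4$ and $\varphi$ normalizable, there exists $q \in \C^*$ with $q+q^{-1}\neq 0$ and a tensor equivalence $\mathcal M^{A_{\rm aut}(R,\varphi)} \simeq^\otimes \mathcal M^{\mathcal B_+(E_q)}$. Because cosemisimplicity of a Hopf algebra is equivalent to semisimplicity of its category of comodules, and is therefore preserved under such an equivalence, the standing hypothesis that $A_{\rm aut}(R,\varphi)$ is cosemisimple forces $\mathcal B_+(E_q)$ to be cosemisimple as well, so that Theorems \ref{cdb+e} and \ref{cobib+e} apply to it.

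Next I would apply the monoidal invariance of Gerstenhaber-Schack cohomology (Subsection \ref{subsec:gen}). Since bialgebra cohomology is $H^*_{\rm GS}(-,\C)$ and a tensor functor sends the trivial Yetter-Drinfeld module to the trivial one, we obtain $H^*_b(A_{\rm aut}(R,\varphi)) \simeq H^*_b(\mathcal B_+(E_q))$ and ${\rm cd}_{\rm GS}(A_{\rm aut}(R,\varphi)) = {\rm cd}_{\rm GS}(\mathcal B_+(E_q))$. Theorem \ref{cobib+e} then delivers the announced values of $H^n_b(A_{\rm aut}(R,\varphi))$, while Theorem \ref{cdb+e} gives ${\rm cd}_{\rm GS}(\mathcal B_+(E_q))=3$, whence ${\rm cd}_{\rm GS}(A_{\rm aut}(R,\varphi))=3$. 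Combining with the general inequality ${\rm cd}(A)\leq {\rm cd}_{\rm GS}(A)$ of Theorem \ref{gsh} yields ${\rm cd}(A_{\rm aut}(R,\varphi))\leq 3$.

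It remains to produce the equality ${\rm cd}(A_{\rm aut}(R,\varphi))=3$ when $\varphi$ is a trace, and here the argument cannot simply be pushed through the equivalence: Kac type is not a monoidal invariant, and the partner $\mathcal B_+(E_q)$ is in general not of Kac type. The key point I would use is that $\varphi$ being a trace is exactly what makes $A_{\rm aut}(R,\varphi)$ itself of Kac type, that is, $S^2={\rm id}$. Granting this, Corollary \ref{cor:partansques2} applies directly to $A_{\rm aut}(R,\varphi)$ and gives ${\rm cd}(A_{\rm aut}(R,\varphi))={\rm cd}_{\rm GS}(A_{\rm aut}(R,\varphi))=3$. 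The main obstacle is precisely establishing this link ``$\varphi$ trace $\Rightarrow$ Kac type'': it is what lets us invoke the cosemisimple Kac-type equality on the side where we actually want the Hochschild dimension, rather than on the generically non-Kac algebra $\mathcal B_+(E_q)$. Finally, for $A_s(n)=A_{\rm aut}(\C^n,\varphi_n)$ with $n\geq 4$, all hypotheses are met: $\dim \C^n = n\geq 4$, $\varphi_n$ is normalizable, $A_s(n)$ is cosemisimple as the coordinate algebra of a compact quantum group, and $\varphi_n$ is a trace since $\C^n$ is commutative; the equality case therefore gives ${\rm cd}(A_s(n))=3={\rm cd}_{\rm GS}(A_s(n))$.
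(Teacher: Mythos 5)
Your proposal is correct and follows essentially the same route as the paper: transport via the monoidal equivalence of \cite{mro}, monoidal invariance of Gerstenhaber--Schack cohomology, Theorems \ref{cdb+e} and \ref{cobib+e}, the inequality of Theorem \ref{gsh}, and Corollary \ref{cor:partansques2} applied to $A_{\rm aut}(R,\varphi)$ itself via the observation that a tracial $\varphi$ forces $S^2=\mathrm{id}$. The paper likewise asserts the ``$\varphi$ trace $\Rightarrow$ Kac type'' point only parenthetically, so your treatment matches its level of detail.
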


\begin{proof}
 The proof follows immediately from the combination of the above monoidal equivalence, the monoidal invariance of Gerstenhaber-Schack cohomology, Theorem \ref{cdb+e}, Theorem \ref{cobib+e}, Theorem \ref{gsh}, and Corollary \ref{cor:partansques2} ($A_{\rm aut}(R,\varphi)$ being of Kac type when $\varphi$ is a trace).
\end{proof}

Note that the length $3$ resolution of the trivial Yetter-Drinfeld module over $\mathcal O({\rm PSL}_q(2))$
by relative projective Yetter-Drinfeld modules considered in the previous subsection (see the proof of Theorem \ref{cobib+e}) transports to a  length $3$ resolution of the trivial Yetter-Drinfeld module over $A_{\rm aut}(R,\varphi)$ by relative projective Yetter-Drinfeld modules (see Theorem 4.1 in \cite{bic}), and in particular this yields 
a length $3$ projective resolution of the trivial module over $A_{\rm aut}(R,\varphi)$. We have not been able to write down this resolution explicitly enough to compute Hochschild cohomology groups and show that one always has ${\rm cd}(A_{\rm aut}(R,\varphi))=3$. We believe that this is true however.

\begin{rem}
It follows that  the $L^2$-Betti numbers (\cite{ky08})
$\beta_k^{(2)}(A_s(n))$ vanish for $k \geq 4$, and we have as well $\beta_0^{(2)}(A_s(n))=0$ by \cite{kye11}.
\end{rem}

\section{Hopf algebras with a projection}

It is natural to ask whether similar results to those of Section 2 hold for Gerstenhaber-Schack cohomological dimension. A positive answer to Question \ref{ques2} would of course provide an affirmative answer. So far, our only positive result in this direction is the following one, in the setting of Hopf algebras with a projection \cite{rad85, maj}.

\begin{proposition}
 Let $B \subset A$ be a Hopf subalgebra. Assume that there exists a Hopf algebra map $\pi : A \rightarrow B$ such that $\pi_{|B}={\rm id}_B$ and that $A$ is cosemisimple. Then we have ${\rm cd}_{\rm GS}(B) \leq {\rm cd}_{\rm GS}(A)$. 
\end{proposition}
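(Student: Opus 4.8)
The plan is to reduce the statement to a comparison of projective dimensions of the two trivial Yetter-Drinfeld modules, and then to transport a projective resolution over $A$ to one over $B$ by means of a restriction functor, in direct analogy with the Hochschild argument for a projection in Proposition \ref{sub}(4). Since $A$ is cosemisimple, so is the Hopf subalgebra $B$, and hence by Proposition \ref{resorelative} both ${\rm cd}_{\rm GS}(A)$ and ${\rm cd}_{\rm GS}(B)$ equal the minimal length of a projective resolution of the trivial object $\C$ in $\yd_A^A$, resp.\ $\yd_B^B$. I may assume ${\rm cd}_{\rm GS}(A)=n<\infty$ and fix a resolution $\mathbf P_\bullet \to \C \to 0$ of length $n$ by projective objects of $\yd_A^A$.

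First I would introduce a restriction functor ${\rm Res}\colon \yd_A^A \to \yd_B^B$, restricting the module structure along the inclusion $B\subset A$ and pushing the comodule structure forward along $\pi$, i.e.\ using the coaction $V \to V\otimes A \xrightarrow{{\rm id}\otimes \pi} V\otimes B$. Since $\pi$ is a Hopf algebra map with $\pi_{|B}={\rm id}_B$, a short computation shows that the Yetter-Drinfeld compatibility over $B$ holds, so that ${\rm Res}$ is well defined; it is moreover exact (it leaves underlying vector spaces unchanged, and exactness of sequences of Yetter-Drinfeld modules is exactness of vector spaces) and satisfies ${\rm Res}(\C)=\C$. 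Applying ${\rm Res}$ to $\mathbf P_\bullet$ then produces an exact sequence ${\rm Res}(\mathbf P_\bullet)\to \C \to 0$ in $\yd_B^B$ of length $n$, so that the whole statement reduces to showing that ${\rm Res}$ sends projective objects of $\yd_A^A$ to projective objects of $\yd_B^B$: once this is known, Proposition \ref{resorelative} gives ${\rm cd}_{\rm GS}(B)\le n$.

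The heart of the matter is thus the preservation of projectives, and this is where I expect the main obstacle to lie. Since $A$ is cosemisimple, Proposition \ref{caracrelproj} identifies the projective objects of $\yd_A^A$ with the direct summands of free Yetter-Drinfeld modules $V\boxtimes A$, $V\in \mathcal M^A$; as ${\rm Res}$ preserves direct sums and summands, it suffices to prove that ${\rm Res}(V\boxtimes A)$ is projective over $B$. Here I would invoke Radford's theory of Hopf algebras with a projection \cite{rad85}: writing $R=A^{{\rm co}\pi}$ for the algebra of right coinvariants, multiplication induces an isomorphism $R\otimes B \xrightarrow{\sim} A$ of right $B$-modules (so $A$ is free over $B$), and $R$ carries a natural $B$-comodule structure. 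Through this decomposition I expect ${\rm Res}(V\boxtimes A)$ to be isomorphic to the free Yetter-Drinfeld module $(\pi_*(V)\otimes R)\boxtimes B$ over $B$, where $\pi_*(V)$ denotes $V$ regarded as a $B$-comodule via $\pi$ (or, failing a clean isomorphism, to a direct summand of such a free module, which by Proposition \ref{caracrelproj} is equally projective). Matching the right $B$-module structures is immediate from $A\cong R\otimes B$; the genuinely non-formal step, and the one I expect to be laborious, is to check that the $B$-coaction of ${\rm Res}(V\boxtimes A)$ corresponds under this identification to the free coaction, a direct but lengthy Sweedler computation relying on the coinvariance of $R$, the identity $\pi_{|B}={\rm id}_B$, and repeated use of the decomposition $A\cong R\otimes B$.

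Once ${\rm Res}(V\boxtimes A)$ has been exhibited as a free (or summand of a free) Yetter-Drinfeld module over $B$, it is projective in $\yd_B^B$, since free Yetter-Drinfeld modules over the cosemisimple Hopf algebra $B$ are projective (Proposition 3.3 in \cite{bic}). Consequently ${\rm Res}$ preserves projectives, the restricted complex ${\rm Res}(\mathbf P_\bullet)$ is a projective resolution of $\C$ of length $n$ in $\yd_B^B$, and Proposition \ref{resorelative} yields ${\rm cd}_{\rm GS}(B)\le n = {\rm cd}_{\rm GS}(A)$, as desired. Apart from the coaction identification in the previous paragraph, every step is a transcription of the classical retraction argument for cohomological dimension into the Yetter-Drinfeld setting.
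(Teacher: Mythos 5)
Your proposal is correct and follows essentially the same route as the paper: the same restriction functor (restricting the action along $B\subset A$ and corestricting the coaction along $\pi$), the same reduction to preservation of projectives via Proposition \ref{resorelative}, and the same identification $F(V\boxtimes A)\simeq (V_\pi\otimes R)\boxtimes B$ using Radford's decomposition $A\simeq R\otimes B$. The paper likewise leaves the coaction verification as a direct check, so nothing essential is missing.
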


\begin{proof}
 The inclusion $B \subset A$ together with the Hopf algebra map $\pi : A \rightarrow B$ induce a vector space preserving linear exact tensor functor 
$$F : \yd_A^A \longrightarrow \yd_B^B$$
where if $V$ is Yetter-Drinfeld module over $A$, then $F(V)=V$ as a vector space, the $B$-module structure is the restriction of that of $A$, and the $B$-comodule structure is given by $({\rm id}_V \otimes \pi)\alpha$, where $\alpha$ is the original co-action of $A$.
We claim that it is enough to show that $F$ sends (relative) projective Yetter-Drinfeld modules over $A$ to (relative) projective Yetter-Drinfeld modules over $B$. Indeed, if we have a length $n$ resolution of the trivial  Yetter-Drinfeld module over $A$ by (relative) projectives, the functor $F$ will transform it into a a length $n$ resolution of the trivial  Yetter-Drinfeld module over $B$ by (relative) projectives, and hence by Proposition \ref{resorelative}, we have ${\rm cd}_{\rm GS}(B) \leq {\rm cd}_{\rm GS}(A)$. 

As usual, put $R= {^{{\rm co} B}\!A}=\{a \in A \ | \ \pi(a_{(1)})\otimes a_{(2)}=1\otimes a\}$. This is a subalgebra of $A$ and we have $({\rm id} \otimes \pi)\Delta(R)\subset R \otimes B$, which endows $R$ with a right $B$-comodule structure. For any $a\in A$, we have  $a_{(2)}\pi S^{-1}(a_{(1)}) \in R$ (since $A$ is cosemisimple, its antipode is bijective), and thus
 we have a linear isomorphism \cite{rad85,maj}
\begin{align*}
 A &\longrightarrow R \otimes B \\
a & \longmapsto a_{(3)}\pi S^{-1}(a_{(2)}) \otimes \pi(a_{(1)})
\end{align*}
whose inverse is the restriction of the multiplication of $A$. Let $V$ be a right $A$-comodule: it also has a right $B$-comodule structure obtained using the projection $\pi : A \rightarrow B$, that we denote $V_\pi$. Consider now the map  
\begin{align*}
F(V \boxtimes A) &\longrightarrow (V_\pi \otimes R) \boxtimes B \\
v \otimes a & \longmapsto v \otimes a_{(3)}\pi S^{-1}(a_{(2)}) \otimes \pi(a_{(1)})
\end{align*}
This is an isomorphism by the previous considerations, and it is a direct verification to check that it is a morphism of Yetter-Drinfeld modules over $B$. Hence the functor $F$ sends free  Yetter-Drinfeld modules over $A$ to free Yetter-Drinfeld modules over $B$, and since it is additive, it sends, by Proposition \ref{caracrelproj},  projective Yetter-Drinfeld modules over $A$ to projective Yetter-Drinfeld modules over $B$. This concludes the proof.
\end{proof}

As an illustration, consider the hyperoctahedral Hopf algebra $A_h(n)$ \cite{bbc}. This is the algebra presented by generators  $a_{ij}$, $1 \leq i,j\leq n$, submitted to the relations
$$\sum_{l=1}^na_{li}^2=1= \sum_{l=1}^na_{il}^2, \ a_{ik}a_{ij}= 0 =a_{ji}a_{ki}\ {\rm if} \  j\not=k, \ 1\leq i,j,k\leq n$$
Its Hopf algebra structure is given by the same formulas as those for $A_s(n)$. There exist Hopf algebra
maps $i: A_s(n) \rightarrow A_h(n)$, $x_{ij} \mapsto a_{ij}^2$, $\pi : A_h(n) \rightarrow A_s(n)$, $a_{ij} \mapsto x_{ij}$, such that $\pi i={\rm id}$. Hence we deduce from the previous proposition that 
${\rm cd}_{\rm GS}(A_h(n))\geq {\rm cd}_{\rm GS}(A_s(n))$, and hence by Theorem \ref{thm:cohomAut}, if $n \geq 4$, we have ${\rm cd}_{\rm GS}(A_h(n))\geq {\rm cd}_{\rm GS}(A_s(n))=3$ (since $A_h(n)$ is cosemisimple of Kac type, this could be deduced as well from the combination of Proposition \ref{sub} and Corollary \ref{cor:partansques2}).

\end{document}